\pdfoutput=1
\documentclass{article}
\usepackage{iclr2027_conference,times}

\usepackage{amsmath,amssymb,amsthm}
\usepackage{bm}
\usepackage{booktabs}
\usepackage{graphicx}
\usepackage{hyperref}
\usepackage{url}

\theoremstyle{plain}
\newtheorem{theorem}{Theorem}
\newtheorem{proposition}[theorem]{Proposition}
\newtheorem{lemma}[theorem]{Lemma}
\newtheorem{corollary}[theorem]{Corollary}
\newtheorem{assumption}{Assumption}
\theoremstyle{remark}

\newcommand{\E}{\mathbb{E}}
\newcommand{\Prob}{\mathbb{P}}
\newcommand{\R}{\mathbb{R}}
\newcommand{\Var}{\mathrm{Var}}
\newcommand{\Cov}{\mathrm{Cov}}
\newcommand{\ind}[1]{\mathbf{1}\{#1\}}
\newcommand{\len}{\operatorname{len}}
\newcommand{\norm}[1]{\|#1\|}
\newcommand{\abs}[1]{|#1|}
\newcommand{\iid}{\text{i.i.d.}}
\newcommand{\SQ}{S^{Q}}
\newcommand{\SM}{S^{M}}
\newcommand{\SQs}{S^{Q*}}
\newcommand{\SMs}{S^{M*}}
\newcommand{\ShatQ}{\widehat{S}^{Q}}
\newcommand{\ShatM}{\widehat{S}^{M}}
\newcommand{\CQ}{C^{Q}_{\alpha}}
\newcommand{\CM}{C^{M}_{\alpha}}
\newcommand{\ChatQ}{\widehat{C}^{Q}_{\alpha}}
\newcommand{\ChatM}{\widehat{C}^{M}_{\alpha}}
\newcommand{\EQ}{E^{Q}_{n}}
\newcommand{\EM}{E^{M}_{n}}
\newcommand{\Sor}{S}
\newcommand{\Qor}{S_{(k)}}

\newcommand{\Shat}{\widehat{S}}

\newcommand{\lhat}{\widehat{\ell}}
\newcommand{\uhat}{\widehat{u}}
\newcommand{\muhat}{\widehat{\mu}}

\newcommand{\qR}{\ShatM_{(k)}}
\newcommand{\qRor}{\SM_{(k)}}
\newcommand{\Fhat}{\widehat{F}_m}
\newcommand{\Hhat}{\widehat{H}_m}

\newcommand{\Gbar}{\bar{G}_n}

\newcommand{\calP}{\mathcal{P}}
\newcommand{\Fcal}{\mathcal{F}}
\newcommand{\tr}{\top}

\title{Valid and Efficient Split Conformal\\ Regression for Time Series}

\author{Percy S. Zhai\textsuperscript{1}, Maggie Cheng\textsuperscript{2}, Wei Biao Wu\textsuperscript{1} \\
\textsuperscript{1}University of Chicago \quad \textsuperscript{2}Illinois Institute of Technology \\
\small\texttt{percy.zhai@chicagobooth.edu}, \texttt{maggie.cheng@iit.edu}, \texttt{wbwu@uchicago.edu}}

\iclrfinalcopy
\begin{document}

\maketitle
\lhead{}
\renewcommand{\headrulewidth}{0pt}
\vspace{-11.70287pt}

\begin{abstract}
We study conformalized quantile regression and conformalized median regression that fit a model on one block of a time series and calibrate the conformal interval on the adjacent block. The existing theory of conformal prediction for time series rests largely on mixing conditions, which are hard to verify from a time-series model and fail for many standard processes, including simple ones with short memory. We replace this theoretical toolbox with the functional dependence measure, which in principle accommodates long-memory observations. The accuracy of the conformal interval length for time series has been understudied. To the best of our knowledge, this paper is the first work that establishes non-asymptotic coverage guarantees and accuracy of interval length simultaneously for split conformal regression on time series. Furthermore, for Gaussian linear processes with long memory, where both the estimation of the center and its calibration converge slowly, we establish a sharper rate for the length error. We show that the calibrated length converges faster than the estimated center itself, and provide a matching lower bound for the usual centers when the calibration block is sufficiently large relative to the training block. To our knowledge, this is the first theoretical analysis of conformal interval length dedicated to long memory.
\end{abstract}

\section{Introduction}\label{sec:intro}

Split conformal prediction turns a fitted regression model into a prediction interval: the model is fitted on a training block, and the interval for a new observation is the fitted interval enlarged by an empirical quantile of scores computed on a separate calibration block \citep{papadopoulos2002inductive,vovk2005algorithmic,lei2018distribution}. For conformalized quantile regression \citep[CQR;][]{romano2019conformalized} the model consists of two fitted conditional quantile functions, and for conformalized median regression (CMR) of a single fitted center whose absolute residuals are calibrated. When the calibration and test observations are exchangeable, the interval covers the new response with probability at least the nominal level whatever the fitted model is. For a time series the two blocks and the test point are adjacent segments of one trajectory, exchangeability fails, and two questions arise: whether the interval still covers at the nominal level, and whether its length approaches the length of the oracle interval formed by the true conditional quantiles. The second question is the question of efficiency, and the two questions together are the subject of this paper.

For non-exchangeable data, the coverage gap of conformal prediction can be bounded through total variation distances \citep{barber2023beyond}; for a stationary series, the gap of split, block, jackknife and ensemble procedures has been quantified through mixing conditions \citep{chernozhukov2018exact,xu2021conformal,oliveira2024split,barber2026predictive,jiang2026leave}, the standard toolbox for learning from dependent data \citep{yu1994rates,mohri2010stability}. This toolbox has two limitations: mixing coefficients are rarely computable from the model that generates a series \citep{wu2005nonlinear,wu2011asymptotic,han2023probability}, and mixing conditions fail for many simple time series \citep{andrews1984nonstrong,bradley2005basic}. Long memory, the property that the autocorrelations of a series are not summable \citep{beran1994statistics}, is documented for asset returns and network traffic \citep{ding1993long,leland1994self} and in language and music \citep{greavestunnell2019statistical}, and standard recurrent networks do not capture it \citep{zhao2020rnn}. The functional dependence measure of \citet{wu2005nonlinear} is computed from the model that generates the series and underlies a theory of quantiles and of high-dimensional time series \citep{wu2005bahadur,wu2011asymptotic,chen2013covariance,zhang2017gaussian}; for a linear process it is determined by the coefficients of the process, and the empirical processes and sample covariance matrices of linear processes are understood also under long memory \citep{wu2003empirical,chen2018concentration,zhai2026simultaneous}.

The length of conformal intervals has been studied for exchangeable data: split conformal and CQR intervals approach the oracle length when the fitted model is consistent \citep{lei2018distribution,sesia2020comparison}, and \citet{yao2026nonasymptotic} give non-asymptotic rates for the length error of CQR and CMR. For dependent data, the bound of \citet[Theorem~3]{xu2023conformal} on the discrepancy between their ensemble interval and the oracle interval is proved for \iid\ errors; the efficiency analysis of adaptive conformal inference \citep{gibbs2021adaptive} by \citet{zaffran2022adaptive} is asymptotic and assumes a known quantile function of the scores; and distributional conformal prediction, a different construction, attains an optimal interval only asymptotically \citep{chernozhukov2021distributional}. None of these compares the calibrated length with the oracle length at a non-asymptotic rate under temporal dependence. \citet{zheng2024conformal} bound the effect of Markov dependence on the size of the split conformal set, but only for Markov chains with a finite uniform mixing time, and show that the set converges to the oracle set without a rate.

\citet{sesia2021conformal}, \citet{kiyani2024length} and \citet{lebars2025volume} study efficiency and length optimization for other oracle targets; we compare with the marginal equal-tailed oracle. \citet{halkiewicz2026rolling} analyzes the coverage of rolling-origin conformal prediction under strong mixing and, in an extension, under a summability condition on the physical dependence measure; neither condition allows Gaussian long memory.

Our contributions can be summarized as follows.
\begin{enumerate}
\item The mixing conditions on which the existing guarantees largely rest fail for many standard processes, including simple ones with short memory. We replace this theoretical toolbox with the functional dependence measure, under which our guarantees hold for such processes and, in principle, allow for long memory.
\item For CQR and CMR with adjacent training and calibration blocks, we bound the coverage error of the calibrated interval and the deviation of its length from that of the oracle interval (Theorems~\ref{thm:coverage}--\ref{thm:cmr}). To our knowledge, these are the first non-asymptotic results that establish coverage and accuracy of interval length simultaneously for split conformal regression on time series.
\item For Gaussian linear processes with long memory, we establish a sharper rate for the length error of CMR (Theorem~\ref{cor:rates}): with blocks of comparable size, the calibrated length converges faster than the estimated center itself. A matching lower bound shows that this rate is exact when the calibration block is large relative to the training block (Proposition~\ref{prop:lower}). To our knowledge, Theorem~\ref{cor:rates} and Proposition~\ref{prop:lower} give the first theoretical analysis of conformal interval length dedicated to long memory and the first exact rate for the length of a conformal interval under dependence.
\end{enumerate}

Section~\ref{sec:setup} states the two procedures and introduces the functional dependence measure, Section~\ref{sec:main} states the three conditions, the coverage and length theorems and the examples, Section~\ref{sec:gaussian} contains the results on estimated centers under Gaussian long memory, Section~\ref{sec:experiments} presents the numerical studies, and Section~\ref{sec:discussion} discusses the scope of the results; all proofs are deferred to the appendices.

\section{Problem Setup and Prerequisites}\label{sec:setup}

Let $Z_t=(X_t,Y_t)=G(\ldots,\varepsilon_{t-1},\varepsilon_t)$ be a strictly stationary process driven by \iid\ innovations $\varepsilon_t$ through a measurable map $G$, and let $\Fcal_t=\sigma(\varepsilon_s:s\le t)$ be the information available at time $t$. Causal linear processes and many nonlinear time series models have this form \citep{wu2005nonlinear}. The level $\alpha\in(0,1)$ is fixed. The predictor is trained on $Z_1,\ldots,Z_n$, calibrated on $Z_{n+1},\ldots,Z_{n+m}$, and used to predict $Y_{n+m+1}$ from $X_{n+m+1}$; no observations are discarded between the blocks, and the training algorithm may use independent randomness.

\subsection{Conformalized Quantile Regression}\label{sec:cqr}

Let $\ell_\alpha(x)$ and $u_\alpha(x)$ be the $\alpha/2$- and $(1-\alpha/2)$-quantiles of the conditional law of $Y_t$ given $X_t=x$ under the stationary distribution, assumed continuous at these two quantiles, so that the oracle interval $\CQ(x)=[\ell_\alpha(x),u_\alpha(x)]$ has conditional coverage $1-\alpha$. This oracle conditions on the covariate, which may include lagged responses, and not on the whole past of the series. On the training block, any algorithm produces two functions $\lhat$ and $\uhat$ that estimate $\ell_\alpha$ and $u_\alpha$; on the calibration block, each observation receives the nonconformity score $\ShatQ_t=\max\{\lhat(X_t)-Y_t,\ Y_t-\uhat(X_t)\}$, $t=n+1,\ldots,n+m$. The calibrated correction is the $k$-th smallest score $\ShatQ_{(k)}$ with $k=\lceil(m+1)(1-\alpha)\rceil$, which is at most $m$ when $m\ge(1-\alpha)/\alpha$, as assumed throughout, and the prediction interval for $Y_{n+m+1}$ is $\ChatQ(x)=[\lhat(x)-\ShatQ_{(k)},\ \uhat(x)+\ShatQ_{(k)}]$ at $x=X_{n+m+1}$, an inverted interval being empty. The same steps with the true quantiles produce the oracle scores $\SQ_t=\max\{\ell_\alpha(X_t)-Y_t,\ Y_t-u_\alpha(X_t)\}$, whose distribution function $F^Q$ satisfies $F^Q(0)=1-\alpha$, and the $k$-th smallest oracle score $\SQ_{(k)}$ of the calibration block, an empirical $(1-\alpha)$-quantile of $m$ dependent scores that estimates zero. The training step enters the results only through the uniform error of the fitted endpoints, $\EQ=\max\{\norm{\lhat-\ell_\alpha}_\infty,\norm{\uhat-u_\alpha}_\infty\}$, the norms being taken over a fixed domain that contains every realized covariate almost surely.

For CQR we ask whether the marginal coverage $\Prob\{Y_{n+m+1}\in\ChatQ(X_{n+m+1})\}$ is close to $1-\alpha$, and whether the length of $\ChatQ(x)$ is close to the length of the oracle interval $\CQ(x)$, uniformly in $x$. On every realization,
\begin{equation}\label{eq:comparison}
\abs{\ShatQ_{(k)}-\SQ_{(k)}}\le\EQ,\qquad \sup_x\abs{\len\ChatQ(x)-\len\CQ(x)}\le4\EQ+2\abs{\SQ_{(k)}}.
\end{equation}
Inequality \eqref{eq:comparison}, proved in Appendix~\ref{app:pathwise}, uses no independence between the blocks and no gap between them, and it reduces both questions to a training question, whether $\EQ$ is small, and a calibration question, whether the empirical $(1-\alpha)$-quantile of $m$ dependent oracle scores is close to the population quantile.

\subsection{Conformalized Median Regression}\label{sec:cmr}

For CMR, the training block produces one function $\muhat$ that estimates the conditional median $\mu(x)$ of $Y_t$ given $X_t=x$, the calibration scores are the absolute residuals $\ShatM_t=\abs{Y_t-\muhat(X_t)}$, the calibrated radius is $\ShatM_{(k)}$, and the prediction interval is $\ChatM(x)=[\muhat(x)-\ShatM_{(k)},\ \muhat(x)+\ShatM_{(k)}]$. Its oracle counterpart is $\CM(x)=[\mu(x)-q_\alpha,\ \mu(x)+q_\alpha]$, where $q_\alpha$ is the $(1-\alpha)$-quantile of the oracle score $\SM_t=\abs{Y_t-\mu(X_t)}$ under the stationary law, and the training error is $\EM=\norm{\muhat-\mu}_\infty$. The two oracles differ: $\CQ(x)$ has coverage $1-\alpha$ at every $x$, whereas $\CM(x)$ uses one radius for all $x$ and has coverage $1-\alpha$ on average over $x$.

The oracle calibration error is now the distance of $\SM_{(k)}$ from $q_\alpha$ in place of the distance of $\SQ_{(k)}$ from zero, and a proof similar to that of \eqref{eq:comparison} gives, on every realization,
\begin{equation}\label{eq:comparisonM}
\abs{\ShatM_{(k)}-\SM_{(k)}}\le\EM,\quad \sup_x\abs{\len\ChatM(x)-\len\CM(x)}=2\abs{\ShatM_{(k)}-q_\alpha}\le2\EM+2\abs{\SM_{(k)}-q_\alpha}.
\end{equation}

\subsection{Functional Dependence Measure}\label{sec:fdm}

The calibration question requires a measure of the dependence of the series, and we use the functional dependence measure of \citet{wu2005nonlinear}. Let $\varepsilon_0'$ be an independent copy of $\varepsilon_0$, and let $Z_t^*=G(\ldots,\varepsilon_{-1},\varepsilon_0',\varepsilon_1,\ldots,\varepsilon_t)$ be the trajectory obtained by replacing the single innovation at time $0$ by its copy, all other innovations being retained; $\SQs_t$ and $\SMs_t$ denote the oracle scores computed from $Z_t^*=(X_t^*,Y_t^*)$ with the oracle endpoints evaluated at $X_t^*$. The functional dependence measures of the observations and of the two oracle scores at lag $j$ are
\[
\delta^Z_q(j)=\norm{Z_j-Z_j^*}_q,\qquad \delta^Q_q(j)=\norm{\SQ_j-\SQs_j}_q,\qquad \delta^M_q(j)=\norm{\SM_j-\SMs_j}_q,\qquad q\ge1,
\]
where $\norm{V}_q=(\E\abs{V}^q)^{1/q}$ with $\abs{\cdot}$ the Euclidean norm: the $L^q$ distance by which the series, or its score, $j$ steps ahead responds to one innovation. For a linear process $Y_t=\sum_{j\ge0}a_j\varepsilon_{t-j}$ the measure equals $\abs{a_j}\,\norm{\varepsilon_0-\varepsilon_0'}_q$, for a recursion $Z_t=H(Z_{t-1},\varepsilon_t)$ that contracts in $L^q$ it decays geometrically \citep{wu2004limit}, and when $a_j\asymp j^{-\beta}$ with $1/2<\beta<1$, so that the autocorrelation at lag $h$ decays like $h^{1-2\beta}$ and the series has long memory, it decays like $j^{-\beta}$: the exponent of its polynomial decay is the memory exponent of the series.

The oracle calibration error is controlled through the events $\{\SQ_t\le s\}$ for thresholds $s$ near the population quantile, and the relevant quantity is therefore the response of the conditional probability of such an event to one innovation, after the intervening innovations have been averaged out:
\begin{equation}\label{eq:omega}
\omega^Q(j)=\sup_{s\in\R}\,\big\|\Prob(\SQ_j\le s\mid\Fcal_0)-\Prob(\SQs_j\le s\mid\Fcal_0^*)\big\|_2,\qquad j\ge0,
\end{equation}
where $\Fcal_0^*=\sigma(\varepsilon_0',\varepsilon_s:s\le-1)$, and $\omega^M(j)$ is defined in the same way with $\SM_j$ and $\SMs_j$. The second probability is the first one computed from the coupled innovations, so $\omega^Q(j)$ is the supremum over $s$ of the functional dependence measure, in $L^2$, of the conditional probability $\Prob(\SQ_j\le s\mid\Fcal_0)$, which is the predictive dependence measure of the indicator of $\{\SQ_j\le s\}$ in the sense of \citet{wu2005nonlinear}.

\section{Main Results}\label{sec:main}

This section provides the marginal coverage and the interval length guarantees of the two procedures simultaneously, for the same calibrated interval. Our theoretical guarantee relies on three assumptions. The first assumption concerns the training step and is the only requirement on the learner. Throughout, the training error $E_n$ denotes $\EQ$ for CQR and $\EM$ for CMR.

\begin{assumption}[Consistent endpoints]\label{ass:train}
There are deterministic sequences $e_n\to0$ and $\xi_n\to0$ such that the training error satisfies $\Prob(E_n>e_n)\le\xi_n$ for every $n$.
\end{assumption}

Assumption~\ref{ass:train} is equivalent to $E_n\to0$ in probability, with $e_n$ and $\xi_n$ recording its rate. It is stated for a general learner on purpose and is easy to check: by ergodicity, linear quantile regression satisfies it under mild conditions on every stationary series of the form of Section~\ref{sec:setup} whose conditional quantiles are linear in bounded covariates, whatever its dependence (Appendix~\ref{app:training}), and in the Gaussian model of Section~\ref{sec:gaussian} the sample mean and the empirical quantiles of the training block satisfy it with an explicit rate.

The second assumption requires that the oracle score have probability mass on both sides of its $(1-\alpha)$-quantile at a rate bounded above and below. Let $F$ denote the distribution function of the oracle score centered at that quantile, so that $F(0)=1-\alpha$: $F=F^Q$ for CQR, and $F=F^M$ for CMR, where $F^M(s)=\Prob(\SM_t-q_\alpha\le s)$.

\begin{assumption}[Regular oracle score distribution]\label{ass:local}
There are constants $r>0$ and $0<f_\alpha\le L_\alpha<\infty$ such that $F$ is continuous on $[-r,r]$ and, for $0\le t\le r$,
\begin{enumerate}
\item[(a)] $F(t)-(1-\alpha)\ge f_\alpha t$ and $(1-\alpha)-F(-t)\ge f_\alpha t$;
\item[(b)] $F(t)-(1-\alpha)\le L_\alpha t$ and $(1-\alpha)-F(-t)\le L_\alpha t$.
\end{enumerate}
\end{assumption}

For CQR, Assumption~\ref{ass:local} holds when the conditional density of $Y_t$ given $X_t$ is bounded above and away from zero near the two oracle endpoints (Appendix~\ref{app:margin}), a condition of the kind used for efficiency under exchangeability \citep{lei2018distribution,yao2026nonasymptotic}. The length results need only part~(a); the coverage results need both parts.

The third assumption is the dependence condition. Consider the predictive dependence measure $\omega(j)$ of the thresholded oracle scores defined in \eqref{eq:omega}, which is $\omega^Q(j)$ for CQR and $\omega^M(j)$ for CMR. The following assumption controls the contribution of the oracle calibration error to the coverage and length errors.

\begin{assumption}[Functional dependence of the oracle scores]\label{ass:fdm}
There are constants $A>0$ and $\gamma>1/2$ such that $\omega(j)\le A(1\vee j)^{-\gamma}$ for all $j\ge0$.
\end{assumption}

Assumption~\ref{ass:fdm} requires no mixing, can be verified from the model through the functional dependence measure of the scores, and in principle allows for long memory (Appendix~\ref{app:smoothingproof}). It holds for the autoregression $W_t=W_{t-1}/2+B_t$ with Bernoulli innovations and for Gaussian long memory, which violate the mixing conditions of the coverage literature, and for long memory in the scale of a volatility model and contractive nonlinear recursions (Appendices~\ref{app:examples} and~\ref{app:switch}). For the Bernoulli autoregression, the coverage bounds of \citet{oliveira2024split} and \citet{barber2026predictive} are vacuous, because its $\beta$-mixing coefficients, on which the first bound rests, and its switch coefficients, on which the second rests, equal one at every lag (Appendix~\ref{app:switch}).

Under Assumption~\ref{ass:fdm}, the empirical distribution function of the oracle scores concentrates near the threshold at the rate $\rho_m=m^{-1/2}$ if $\gamma>1$, $\rho_m=m^{-1/2}\log(m+1)$ if $\gamma=1$ and $\rho_m=m^{1/2-\gamma}$ if $1/2<\gamma<1$, with a constant $C_\sigma$ depending only on $A$ and $\gamma$ (Lemma~\ref{lem:sigmarate} in Appendix~\ref{app:projection}); apart from the training error, this is the only way in which the dependence of the series enters the results. For brevity, we first state the marginal coverage result and the interval length guarantee for CQR.

\begin{theorem}[CQR marginal coverage]\label{thm:coverage}
Suppose that Assumptions~\ref{ass:train}--\ref{ass:fdm} hold. There exist $m_0\le m_1$, depending only on $\alpha$, $\gamma$, $f_\alpha$, $L_\alpha$, $r$ and $C_\sigma$, such that for all $n$ with $e_n\le r/4$ and all $m\ge m_1$,
\begin{equation}\label{eq:coveragerate}
\big|\Prob\{Y_{n+m+1}\in\ChatQ(X_{n+m+1})\}-(1-\alpha)\big|\le 2L_\alpha e_n+\xi_n+C_1\rho_m^{2/3},
\end{equation}
where $C_1$ depends only on $L_\alpha$, $f_\alpha$ and $C_\sigma$. If in addition the conditional law of $\SQ_{n+m+1}$ given $\Fcal_{n+m}$ has a density at most $L_{\mathrm{pred}}$ almost surely and $\E(\SQ_0)^2<\infty$, then for all $n$ and all $m\ge m_0$,
\begin{equation}\label{eq:coveragepred}
\big|\Prob\{Y_{n+m+1}\in\ChatQ(X_{n+m+1})\}-(1-\alpha)\big|\le L_{\mathrm{pred}}\{2\E\EQ+C_2\rho_m/2\},
\end{equation}
where $C_2$ depends only on $\alpha$, $f_\alpha$, $r$, $\E(\SQ_0)^2$ and $C_\sigma$.
\end{theorem}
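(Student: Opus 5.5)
The plan is to reduce coverage to the event $\{\SQ_{n+m+1}\le \ShatQ_{(k)}\}$ and then use the pathwise comparison \eqref{eq:comparison}. Since $Y\in\ChatQ(X)$ iff $\ShatQ_{n+m+1}\le\ShatQ_{(k)}$, and $\abs{\ShatQ_{n+m+1}-\SQ_{n+m+1}}\le\EQ$, we have on $\{\EQ\le e_n\}$ the sandwich
\[
\{\SQ_{n+m+1}\le \SQ_{(k)}-2e_n\}\subseteq\{Y_{n+m+1}\in\ChatQ(X_{n+m+1})\}\subseteq\{\SQ_{n+m+1}\le \SQ_{(k)}+2e_n\}.
\]
The complement event costs at most $\xi_n$. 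It therefore suffices to bound $\Prob\{\SQ_{n+m+1}\le \SQ_{(k)}\pm 2e_n\}-(1-\alpha)$. Both of these involve only oracle scores.

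For \eqref{eq:coveragerate} I fix a threshold $\tau>0$, to be chosen later, and split on the event $\{\abs{\SQ_{(k)}}\le\tau\}$. On that event,
\[
\{\SQ_{n+m+1}\le \SQ_{(k)}+2e_n\}\subseteq\{\SQ_{n+m+1}\le\tau+2e_n\},
\]
and by stationarity and Assumption~\ref{ass:local}(b) its probability is at most $1-\alpha+L_\alpha(\tau+2e_n)$. This needs $\tau+2e_n\le r$, which explains the requirement $e_n\le r/4$ and the condition $m\ge m_1$ (which makes $\tau\le r/2$). The lower direction is symmetric. This step avoids any independence between the test point and the calibration block; the price is the slack $\tau$.

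Next I bound $\Prob(\abs{\SQ_{(k)}}>\tau)$. The event $\{\SQ_{(k)}>\tau\}$ means the empirical distribution function at $\tau$ is below $k/m$. Since $k/m\le 1-\alpha+2/m$ and $F(\tau)\ge 1-\alpha+f_\alpha\tau$ by Assumption~\ref{ass:local}(a), this event forces a deviation of the empirical distribution function of at least $f_\alpha\tau-2/m$. By Lemma~\ref{lem:sigmarate}, the empirical distribution function at a fixed threshold has $L^2$ deviation at most $C_\sigma\rho_m$. Chebyshev then gives probability at most $C_\sigma^2\rho_m^2/(f_\alpha\tau/2)^2$. The lower tail $\{\SQ_{(k)}<-\tau\}$ is handled the same way.

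Collecting terms, the total error is at most
\[
2L_\alpha e_n+\xi_n+L_\alpha\tau+C\rho_m^2/\tau^2.
\]
Taking $\tau=\rho_m^{2/3}$ balances the last two terms and gives $C_1\rho_m^{2/3}$. The threshold $m_1$ is chosen so that $\rho_m^{2/3}\le r/2$ and $2/m\le f_\alpha\tau/2$.

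For \eqref{eq:coveragepred} I condition on $\Fcal_{n+m}$ instead. Both $\SQ_{(k)}$ and $\ShatQ_{(k)}$ are $\Fcal_{n+m}$-measurable, up to independent training randomness. First I bound the probability of the interval event using the test score's own perturbation $\pm\EQ$ rather than $\pm e_n$; the conditioning argument has to handle that $\EQ$ is not independent of the test point. Then, using the conditional density bound $L_{\mathrm{pred}}$,
\[
\Prob\{\SQ_{n+m+1}\le \SQ_{(k)}\pm\EQ\mid\Fcal_{n+m}\}=\Prob\{\SQ_{n+m+1}\le \SQ_{(k)}\mid\Fcal_{n+m}\}\pm L_{\mathrm{pred}}\EQ.
\]
This produces the term $L_{\mathrm{pred}}\cdot 2\E\EQ$.

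The harder part is the remaining term $\Prob\{\SQ_{n+m+1}\le\SQ_{(k)}\}-(1-\alpha)$. My first attempt is to write it as $\E[\Phi(\SQ_{(k)})-\Phi(0)]$, where $\Phi(s)=\Prob(\SQ_{n+m+1}\le s\mid\Fcal_{n+m})$ is $L_{\mathrm{pred}}$-Lipschitz. That gives a bound of $L_{\mathrm{pred}}\E\abs{\SQ_{(k)}}$, and the task reduces to showing $\E\abs{\SQ_{(k)}}\le C_2\rho_m/2$.

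For that moment bound, I use the tail estimate above for $\tau\le r$, now with $\Prob\le C\rho_m^2/\tau^2$. Integrating this gives $\int_0^r\min(1,C\rho_m^2/\tau^2)\,d\tau\lesssim\rho_m$. For $\tau>r$, I use $\E(\SQ_0)^2<\infty$ with a crude union or Markov bound on the order statistic, which gives a lower-order contribution once $m\ge m_0$.

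I expect the main obstacle to be this tail beyond $r$. It needs care so that it stays $O(\rho_m)$, for example by bounding $\Prob(\SQ_{(k)}>\tau)$ through the count of exceedances, with probability at most $m\Prob(\SQ_0>\tau)/(m-k+1)$, which is $O(\alpha^{-1}\E(\SQ_0)^2/\tau^2)$, combined with a Chebyshev bound at $\tau=r$.
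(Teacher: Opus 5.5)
Your proposal is correct and follows the paper's proof essentially step for step. For \eqref{eq:coveragerate} it uses the same pathwise sandwich on $\{\EQ\le e_n,\ \abs{\SQ_{(k)}}\le t\}$, with a window of half-width $t\asymp\rho_m^{2/3}$ and Chebyshev via Lemma~\ref{lem:sigmarate}. For \eqref{eq:coveragepred} it uses the same reduction, through the conditional density given $\Fcal_{n+m}$, to $L_{\mathrm{pred}}\E(2\EQ+\abs{\SQ_{(k)}})$, followed by integrating the two-threshold tail bound on $[0,r]$; your conditional display should carry $\pm2\EQ$, as your first sandwich does.

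Beyond $r$, the paper uses the rank inequality $\min(k,m-k+1)(\SQ_{(k)})^2\le\sum_i(\SQ_{n+i})^2$ together with Cauchy--Schwarz, $\E(\abs{\SQ_{(k)}}-r)_+\le\norm{\SQ_{(k)}}_2\Prob(\abs{\SQ_{(k)}}>r)^{1/2}$. This plays the same role as your capped exceedance bound $\int_r^\infty\min\{\Prob(\abs{\SQ_{(k)}}>r),K\tau^{-2}\}\,d\tau\le2\{K\Prob(\abs{\SQ_{(k)}}>r)\}^{1/2}$. The cap is essential, since the exceedance bound alone integrates only to $O(1)$.
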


Theorem~\ref{thm:coverage} requires no mixing condition and, in principle, allows for long memory. Bound \eqref{eq:coveragerate} holds without a density for the score at the test time; under the additional density condition, bound \eqref{eq:coveragepred} replaces $\rho_m^{2/3}$ by $\rho_m$ (Appendix~\ref{app:jointproof}).

\begin{theorem}[CQR accuracy of interval length]\label{thm:length}
Suppose that Assumptions~\ref{ass:local}(a) and \ref{ass:fdm} hold. For every $e\ge0$ and every $t$ with $4/(mf_\alpha)\le t\le r$, a range that is nonempty when $m\ge4/(f_\alpha r)$,
\begin{equation}\label{eq:lengthtail}
\Prob\Big\{\sup_x\abs{\len\ChatQ(x)-\len\CQ(x)}>4e+2t\Big\}\le\Prob(\EQ>e)+\frac{8C_\sigma^2\rho_m^2}{f_\alpha^2t^2}.
\end{equation}
If $\E(\SQ_0)^2<\infty$, then for all $m\ge m_0$,
\begin{equation}\label{eq:lengthexpectation}
\E\sup_x\abs{\len\ChatQ(x)-\len\CQ(x)}\le4\E\EQ+C_2\rho_m,
\end{equation}
with the constant $C_2$ of Theorem~\ref{thm:coverage}.
\end{theorem}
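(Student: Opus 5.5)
Both bounds follow from the pathwise comparison \eqref{eq:comparison}, which gives $\sup_x\abs{\len\ChatQ(x)-\len\CQ(x)}\le4\EQ+2\abs{\SQ_{(k)}}$ on every realization. On the event $\{\EQ\le e\}\cap\{\abs{\SQ_{(k)}}\le t\}$ the length error is therefore at most $4e+2t$. A union bound then reduces \eqref{eq:lengthtail} to a tail bound for the oracle calibration error:
\[
\Prob\{\abs{\SQ_{(k)}}>t\}\le \frac{8C_\sigma^2\rho_m^2}{f_\alpha^2t^2}.
\]
The training and calibration errors never have to be decoupled, so no independence between the blocks is needed.

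For the tail bound, let $\widehat F_m(s)=m^{-1}\sum_{t=n+1}^{n+m}\ind{\SQ_t\le s}$. Since $k=\lceil(m+1)(1-\alpha)\rceil$:
\begin{itemize}
\item $\{\SQ_{(k)}>t\}\subseteq\{\widehat F_m(t)<k/m\}$;
\item $\{\SQ_{(k)}<-t\}\subseteq\{\widehat F_m(-t)\ge k/m\}$, up to the usual care with ties and strict versus non-strict inequalities.
\end{itemize}
We have $k/m\le 1-\alpha+2/m$ and $k/m\ge1-\alpha$. By Assumption~\ref{ass:local}(a), $F^Q(t)\ge1-\alpha+f_\alpha t$, and the constraint $t\ge4/(mf_\alpha)$ gives $2/m\le f_\alpha t/2$. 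So the upper event forces $\widehat F_m(t)-F^Q(t)<-f_\alpha t/2$, and symmetrically the lower event forces $\widehat F_m(-t)-F^Q(-t)\ge f_\alpha t$. Chebyshev then bounds each event by $4\Var(\widehat F_m(\pm t))/(f_\alpha t)^2$.

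The variance comes from Lemma~\ref{lem:sigmarate}. Under Assumption~\ref{ass:fdm}, the projection decomposition $\ind{\SQ_t\le s}-F^Q(s)=\sum_{j\ge0}\mathcal P_{t-j}\ind{\SQ_t\le s}$ has $\|\mathcal P_{t-j}\ind{\SQ_t\le s}\|_2\le\omega^Q(j)$, which yields $\|\widehat F_m(s)-F^Q(s)\|_2\le C_\sigma\rho_m$ uniformly in $s$. Summing the two Chebyshev bounds gives at most $8C_\sigma^2\rho_m^2/(f_\alpha^2t^2)$. The requirement $t\le r$ keeps us in the range where Assumption~\ref{ass:local}(a) applies.

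For \eqref{eq:lengthexpectation}, take expectations in \eqref{eq:comparison}: the left side is at most $4\E\EQ+2\E\abs{\SQ_{(k)}}$. We bound $\E\abs{\SQ_{(k)}}=\int_0^\infty\Prob(\abs{\SQ_{(k)}}>t)\,dt$ by splitting the range of integration.
\begin{itemize}
\item On $[0,t_0]$ with $t_0=\max\{4/(mf_\alpha),\rho_m\}$, bound the integrand by $1$, contributing at most $t_0\lesssim\rho_m$. This uses $1/m\le\rho_m$ for all three regimes of $\gamma$.
\item On $[t_0,r]$, use the tail bound; the integral is at most $8C_\sigma^2\rho_m^2/(f_\alpha^2t_0)\lesssim\rho_m$.
\item On $(r,\infty)$, the tail bound is unavailable. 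There we use $\Prob(\abs{\SQ_{(k)}}>t)\le\Prob(\abs{\SQ_{(k)}}>r)\wedge\Prob(\max_t\abs{\SQ_t}>t)$.
\end{itemize}

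The main obstacle is this large-$t$ range, where the moment condition $\E(\SQ_0)^2<\infty$ enters. My first attempt is Cauchy--Schwarz:
\[
\E\bigl[\abs{\SQ_{(k)}}\ind{\abs{\SQ_{(k)}}>r}\bigr]\le\|\SQ_{(k)}\|_2\,\Prob(\abs{\SQ_{(k)}}>r)^{1/2}.
\]
Here $\Prob(\abs{\SQ_{(k)}}>r)^{1/2}\le\sqrt8\,C_\sigma\rho_m/(f_\alpha r)$. Since $k\le m$ and $k\ge(1-\alpha)m$, $\SQ_{(k)}^2$ is at most the average of the top $m-k+1\ge\alpha m$ order statistics of $(\SQ_t)^2$ over an $\alpha$-fraction of the sample, so $\E(\SQ_{(k)})^2\le\E(\SQ_0)^2/\alpha$. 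This gives a contribution of order $\rho_m\sqrt{\E(\SQ_0)^2/\alpha}/(f_\alpha r)$. Collecting the three pieces gives $C_2\rho_m$ with $C_2$ depending on $\alpha,f_\alpha,r,\E(\SQ_0)^2,C_\sigma$, valid once $m\ge m_0$ so that $4/(mf_\alpha)\le r$.
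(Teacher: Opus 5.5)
Your proof is the paper's proof. It uses the pathwise bound \eqref{eq:comparison} with a union bound, the two-threshold Chebyshev step with $\sigma_m\le C_\sigma\rho_m$ from Lemma~\ref{lem:sigmarate}, and, for the mean, the tail integral over $[0,r]$ plus Cauchy--Schwarz beyond $r$ with a rank-based bound on $\E(\SQ_{(k)})^2$.

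Two small slips affect only constants.

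\begin{itemize}
\item The claim $m-k+1\ge\alpha m$ is false in general: for $\alpha=.1$ and $m=21$ one has $k=20$ and $m-k+1=2<2.1$.
\item When $\SQ_{(k)}<0$, only $k$ scores are guaranteed to have square at least $(\SQ_{(k)})^2$, and $k$ can be below $m-k+1$ if $\alpha>1/2$.
\end{itemize}

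The correct statement is $\min(k,m-k+1)\,(\SQ_{(k)})^2\le\sum_t(\SQ_t)^2$, with $\min(k,m-k+1)\ge m\min(1-\alpha,\alpha/2)$ for $m\ge m_0$.

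Separately, your split at $t_0=\max\{4/(mf_\alpha),\rho_m\}$ gives a constant quadratic in $C_\sigma/f_\alpha$. The paper instead integrates $\min\{1,b_m^2/(t-a_m)^2\}$ exactly, with $a_m=d_m/f_\alpha$ and $b_m=\sqrt2\,\sigma_m/f_\alpha$, which gives $a_m+2b_m$. That is what yields the specific constant $C_2$ of \eqref{eq:C2} shared with Theorem~\ref{thm:coverage}.
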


The proofs of Theorems~\ref{thm:coverage} and \ref{thm:length} are given in Appendix~\ref{app:jointproof}. To our knowledge, Theorem~\ref{thm:length} is the first non-asymptotic comparison of the calibrated length with the oracle length under temporal dependence. Together, the two theorems show that under Assumptions~\ref{ass:train}--\ref{ass:fdm}, as $n\to\infty$ and $m\to\infty$ in any manner, the same calibrated interval is both asymptotically valid and asymptotically as short as the oracle interval.

The CMR versions are as follows.

\begin{theorem}[CMR marginal coverage and accuracy of interval length]\label{thm:cmr}
Suppose that Assumptions~\ref{ass:train}--\ref{ass:fdm} hold for CMR, that is, for $\EM$, $F^M$ and $\omega^M$, and let $m_0$, $m_1$, $C_1$ and $C_2$ be as in Theorems~\ref{thm:coverage}--\ref{thm:length}, with $\E(\SM_0-q_\alpha)^2$ in place of $\E(\SQ_0)^2$. For all $n$ with $e_n\le r/4$ and all $m\ge m_1$,
\begin{equation}\label{eq:cmrcoverage}
\big|\Prob\{Y_{n+m+1}\in\ChatM(X_{n+m+1})\}-(1-\alpha)\big|\le 2L_\alpha e_n+\xi_n+C_1\rho_m^{2/3},
\end{equation}
and if the conditional law of $\SM_{n+m+1}$ given $\Fcal_{n+m}$ has a density at most $L_{\mathrm{pred}}$ almost surely and $\E(\SM_0)^2<\infty$, then for all $n$ and all $m\ge m_0$,
\begin{equation}\label{eq:cmrcoveragepred}
\big|\Prob\{Y_{n+m+1}\in\ChatM(X_{n+m+1})\}-(1-\alpha)\big|\le L_{\mathrm{pred}}\{2\E\EM+C_2\rho_m/2\}.
\end{equation}
For every $e\ge0$ and every $t$ with $4/(mf_\alpha)\le t\le r$,
\begin{equation}\label{eq:cmrlengthtail}
\Prob\Big\{\sup_x\abs{\len\ChatM(x)-\len\CM(x)}>2e+2t\Big\}\le\Prob(\EM>e)+\frac{8C_\sigma^2\rho_m^2}{f_\alpha^2t^2},
\end{equation}
and if $\E(\SM_0)^2<\infty$, then for all $m\ge m_0$,
\begin{equation}\label{eq:cmrlength}
\E\sup_x\abs{\len\ChatM(x)-\len\CM(x)}=2\,\E\abs{\ShatM_{(k)}-q_\alpha}\le2\E\EM+C_2\rho_m.
\end{equation}
\end{theorem}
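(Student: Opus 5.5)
The plan is to reduce Theorem~\ref{thm:cmr} to the proofs of Theorems~\ref{thm:coverage} and \ref{thm:length}, replacing the CQR score by the centered CMR score. Write $\widetilde S_t=\SM_t-q_\alpha$ and $\widetilde{\Shat}_t=\ShatM_t-q_\alpha$. Then $F^M$ is the distribution function of $\widetilde S_t$ with $F^M(0)=1-\alpha$. The $k$-th order statistics satisfy $\widetilde S_{(k)}=\SM_{(k)}-q_\alpha$, and the events $\{\SM_j\le s\}$ are exactly the events $\{\widetilde S_j\le s-q_\alpha\}$, so $\omega^M$ is the predictive dependence measure of the centered scores. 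The CQR argument never used the particular form of $\SQ_t$. It used only three ingredients: the pathwise comparison \eqref{eq:comparison}, Assumption~\ref{ass:local} for the centered score distribution, and Lemma~\ref{lem:sigmarate} applied under Assumption~\ref{ass:fdm}. So the first step is to check that each ingredient has a CMR analogue.

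\textbf{Pathwise comparison.} This analogue is \eqref{eq:comparisonM}. Since $\abs{\ShatM_t-\SM_t}\le\abs{\muhat(X_t)-\mu(X_t)}\le\EM$ for every $t$, and order statistics are $1$-Lipschitz in sup norm, we get $\abs{\ShatM_{(k)}-\SM_{(k)}}\le\EM$. The length of $\ChatM(x)$ is $2\ShatM_{(k)}$ for every $x$, so the length difference is $2\abs{\ShatM_{(k)}-q_\alpha}$. This gives constant $2$ in front of $e$ where CQR had $4$.

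\textbf{Length bounds.} For \eqref{eq:cmrlengthtail}, on $\{\EM\le e\}$ the length error exceeds $2e+2t$ only if $\abs{\widetilde S_{(k)}}>t$. I bound $\Prob(\widetilde S_{(k)}>t)\le\Prob\{\widehat G_m(t)<k/m\}$, where $\widehat G_m$ is the empirical distribution function of the centered calibration scores, and similarly for $-t$. Since $k/m\le 1-\alpha+2/m$ and Assumption~\ref{ass:local}(a) gives $F^M(t)-(1-\alpha)\ge f_\alpha t\ge 4/m$, the deviation $\widehat G_m(t)-F^M(t)$ must be below $-f_\alpha t/2$. Chebyshev's inequality with Lemma~\ref{lem:sigmarate}, which gives $\|\widehat G_m(s)-F^M(s)\|_2\le C_\sigma\rho_m$, then yields $4C_\sigma^2\rho_m^2/(f_\alpha^2t^2)$ for each side. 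This is exactly the CQR computation. For \eqref{eq:cmrlength}, I integrate this tail over $t\in[4/(mf_\alpha),r]$. Outside $[-r,r]$ I use $\E\widetilde S_{(k)}^2\mathbf 1\{\abs{\widetilde S_{(k)}}>r\}$ together with the second moment of $\SM_0-q_\alpha$, copying the CQR argument. Since $\E(\SM_0)^2<\infty$ makes $\E(\SM_0-q_\alpha)^2$ finite, $C_2$ is the same constant with this moment substituted.

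\textbf{Coverage bounds.} The event $\{Y_{n+m+1}\in\ChatM(X_{n+m+1})\}$ equals $\{\abs{Y_{n+m+1}-\muhat(X_{n+m+1})}\le\ShatM_{(k)}\}$. Up to $\pm 2\EM$, this is sandwiched between $\{\widetilde S_{n+m+1}\le\widetilde S_{(k)}\mp2\EM\}$, using $\abs{\ShatM_{n+m+1}-\SM_{n+m+1}}\le\EM$ and $\abs{\ShatM_{(k)}-\SM_{(k)}}\le\EM$. From here the CQR proofs go through verbatim.
\begin{itemize}
\item For \eqref{eq:cmrcoverage}: on $\{\EM\le e_n\}$, localize $\widetilde S_{(k)}$ within a window of width $\rho_m^{2/3}$ via Lemma~\ref{lem:sigmarate}. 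Then replace $\Prob(\widetilde S_{n+m+1}\le s)$ by $F^M(s)$ uniformly over the window, using Assumption~\ref{ass:local}(b) and the decoupling of the test point from the calibration block given by the dependence bound. The cost of the window shift is $L_\alpha\cdot 2e_n$, plus $\xi_n$ for the exceptional event.
\item For \eqref{eq:cmrcoveragepred}: condition on $\Fcal_{n+m}$. The threshold $\ShatM_{(k)}$ and $\muhat$ are $\Fcal_{n+m}$-measurable, up to independent training randomness, so the conditional density bound $L_{\mathrm{pred}}$ turns the error into $L_{\mathrm{pred}}\E\abs{\widetilde{\Shat}_{(k)}-\widetilde S_{(k)}+\cdots}$. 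This is bounded by $L_{\mathrm{pred}}(2\E\EM+\E\abs{\widetilde S_{(k)}}\ldots)$, matching the CQR form with $C_2\rho_m/2$.
\end{itemize}

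The main obstacle is not new work but verifying that the CQR proof used no structural property of $\SQ$. In particular, the centering by $q_\alpha$ must commute with the dependence measure and the moment conditions, and the factor $2\EM$ rather than $\EQ$ appearing in the coverage sandwich must still yield $2L_\alpha e_n$. I expect it does, since the CQR sandwich also shifts by $\EQ$ at both the test score and the quantile, giving $2\EQ$.
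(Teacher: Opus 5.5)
Your proposal is correct and matches the paper's proof. The paper likewise passes to the centered score $\SM_t-q_\alpha$ and uses \eqref{eq:comparisonM}, which bounds the length error by $2\abs{\ShatM_{(k)}-q_\alpha}\le2\EM+2\abs{\SM_{(k)}-q_\alpha}$ and puts the perturbation $2\EM+\abs{\SM_{(k)}-q_\alpha}$ into the coverage sandwich; it then reruns the CQR coverage and length arguments verbatim. Two small precisions. First, the window step for \eqref{eq:cmrcoverage} needs no decoupling of the test point from the calibration block: on $\{\EM\le e_n,\ \abs{\SM_{(k)}-q_\alpha}\le t\}$ the coverage event lies between $\{\SM_{n+m+1}-q_\alpha\le-2e_n-t\}$ and $\{\SM_{n+m+1}-q_\alpha\le2e_n+t\}$, whose thresholds are deterministic, so their probabilities are $F^M(\mp(2e_n+t))$ by stationarity alone. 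Second, for \eqref{eq:cmrlength} you must integrate $\min\{1,8C_\sigma^2\rho_m^2/(f_\alpha t)^2\}$, i.e.\ cap the tail bound at one, rather than integrate the raw bound from $4/(mf_\alpha)$, which only yields order $m\rho_m^2$.
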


Theorem~\ref{thm:cmr}, proved in Appendix~\ref{app:jointproof}, gives the same guarantees for CMR, so the CMR interval is also asymptotically valid and asymptotically as short as its oracle; for Gaussian long memory, Section~\ref{sec:gaussian} sharpens its length bound.

\section{Estimated Centers under Gaussian Long Memory}\label{sec:gaussian}

A common difficulty with long-memory processes is that estimation converges slowly: the sample median, for instance, converges much more slowly than under short memory \citep{dehling1989empirical}. The length bound of Theorem~\ref{thm:cmr} inherits this slowness through both of its terms, the training error and the calibration error. In this section, devoted to long-memory processes, we show that the calibrated length can nevertheless converge faster than the estimated center. We study the Gaussian linear process. It is more convenient to analyze and is nevertheless representative: by the Wold decomposition, every stationary Gaussian process without a deterministic component can be written as a Gaussian linear process \citep[Section~5.7]{brockwell1991time}. Specifically, we consider
\begin{equation}\label{eq:linearprocess}
G_t=\sum_{j\ge0}a_j\varepsilon_{t-j},\quad \varepsilon_t\overset{\iid}{\sim}N(0,1),\quad \sum_{j\ge0}a_j^2=1,\quad a_0\ne0,\quad \abs{a_j}\le A(1+j)^{-\beta},
\end{equation}
with $1/2<\beta<1$, the long-memory range of Section~\ref{sec:fdm}. Gaussian innovations are assumed for convenience; since the improvement rests on the symmetry of their law, we expect it to extend to symmetric non-Gaussian innovations.

We observe $Y_t=\mu+G_t$ with an unknown location $\mu$ and a constant covariate. The oracle intervals of CQR and CMR then coincide, since the law of $G_t$ is symmetric: $\CQ=\CM=[\mu-z_\alpha,\ \mu+z_\alpha]$, where $z_\alpha=\Phi^{-1}(1-\alpha/2)$ is the radius $q_\alpha$ of Section~\ref{sec:cmr}. The two procedures also coincide: for any fitted endpoints, the CQR score is $\max\{\lhat-y,\ y-\uhat\}=\abs{y-\widehat c}-(\uhat-\lhat)/2$ with the midpoint $\widehat c=(\lhat+\uhat)/2$; because the covariate is constant, the half-width $(\uhat-\lhat)/2$ is common to all scores and cancels, and the CQR interval is the CMR interval $[\widehat c-\ShatM_{(k)},\ \widehat c+\ShatM_{(k)}]$ with center $\widehat c$. In this section we therefore provide the theory for CMR, with a center fitted on the training block, and it applies to CQR with the midpoint of the fitted endpoints as the center.

For the sample mean, the two terms of Theorem~\ref{thm:cmr} are of order $n^{1/2-\beta}$ and $m^{1/2-\beta}$ (Appendix~\ref{app:examples}), and in this model both can be improved. Because the centered oracle score $\abs{G_t}-z_\alpha$ is an even function of $G_t$, the calibration rate improves from $m^{1/2-\beta}$ to the following rate (Appendix~\ref{app:hermite}):
\begin{equation}\label{eq:rm}
r_m(\beta)=\begin{cases}m^{-(2\beta-1)}, & 1/2<\beta<3/4,\\ \{\log(m+1)/m\}^{1/2}, & \beta=3/4,\\ m^{-1/2}, & 3/4<\beta<1.\end{cases}
\end{equation}
Because the law of $G_t$ is symmetric, an error in the center changes the population radius of the calibration residuals only at second order (Appendix~\ref{app:shifted}). The difficulty is to retain this second-order effect with a finite calibration block that is adjacent to the training block, so that the center is correlated with the calibration residuals.

\begin{theorem}[CMR length under Gaussian long memory]\label{cor:rates}
Assume \eqref{eq:linearprocess}, and let $\muhat$ be a measurable function of $Y_1,\ldots,Y_n$ and of randomness independent of the series. There are constants $m_0$, depending only on $\alpha$, and $C$, depending only on $\alpha$, $\beta$, $A$ and $a_0$, such that for all $n$ and all $m\ge m_0$,
\begin{equation}\label{eq:secondorder}
\E\abs{\len\ChatM-2z_\alpha}\le C\big\{\E(\muhat-\mu)^2+r_m(\beta)\big\}.
\end{equation}
For the sample mean, the empirical median, and the midpoint of the empirical $\alpha/2$- and $(1-\alpha/2)$-quantiles of the training block, $\E(\muhat-\mu)^2\le Cn^{-(2\beta-1)}$, and therefore $\E\abs{\len\ChatM-2z_\alpha}\le C\{n^{-(2\beta-1)}+r_m(\beta)\}$.
\end{theorem}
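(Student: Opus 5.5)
Write $D=\muhat-\mu$ for the centering error. Since $\len\ChatM=2\ShatM_{(k)}$, the quantity to bound is $\E\abs{\ShatM_{(k)}-z_\alpha}$. The calibration residuals are $\ShatM_t=\abs{G_t-D}$ for $t=n+1,\ldots,n+m$. Given $D=d$ fixed, their population $(1-\alpha)$-quantile is the radius $q(d)$ solving $\Phi(q-d)-\Phi(-q-d)=1-\alpha$. Symmetry of $\Phi$ makes $q$ even in $d$ and smooth, so $\abs{q(d)-z_\alpha}\le Cd^2$ for bounded $d$; for large $d$ we use $q(d)\le\abs{d}+z_\alpha$ together with Chebyshev.

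The plan splits the error into a population part, $\abs{q(D)-z_\alpha}\le CD^2$, and an empirical part, $\ShatM_{(k)}-q(D)$. The empirical part is the $k$-th order statistic of the shifted residuals minus its shifted population quantile. We control it uniformly in the shift over a small window $\abs{d}\le\delta$ with $\delta$ depending only on $\alpha$. On $\{\abs{D}>\delta\}$ we pay $\E[(\abs{D}+\abs{\SM_{(k)}-z_\alpha})\mathbf 1\{\abs D>\delta\}]\le C\E D^2+C r_m$, using Cauchy--Schwarz and $\Prob(\abs D>\delta)\le\E D^2/\delta^2$.

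The core step is the uniform empirical-process bound in $d$. Define $\Hhat(d,s)=m^{-1}\sum_t\ind{\abs{G_t-d}\le s}$ and $H(d,s)=\Phi(s-d)-\Phi(-s-d)$. Expand the indicator in Hermite polynomials of the standardized $G_t$. The first-order coefficient is $\phi(s-d)-\phi(s+d)$, which is odd in $d$ and therefore of size $O(\abs d)$ near $d=0$. All higher coefficients enter with covariance sums $\sum_{\abs h<m}\rho(h)^2\asymp\sum h^{2-4\beta}$. This is exactly the computation behind $r_m(\beta)$ in \eqref{eq:rm}, from Appendix~\ref{app:hermite}. The linear term then contributes $\abs d\,\abs{\bar G_m}$ with $\bar G_m=m^{-1}\sum_tG_t$, and $\E\abs{D\bar G_m}\le\frac12\E D^2+\frac12\E\bar G_m^2$. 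The difficulty is that $\E\bar G_m^2\asymp m^{1-2\beta}$, which is not $O(r_m)$.

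Here adjacency of the blocks must be handled. I would linearize once more: the correction to the quantile is $-\{\phi(z-d)-\phi(z+d)\}\bar G_m/(2\phi(z_\alpha))\approx c\,D\,\bar G_m$, where $c$ is a constant times $z_\alpha$. Its expectation is $c\,\E[D\bar G_m]$, and the absolute value is bounded crudely. To beat $\abs D\abs{\bar G_m}$, use the following: for an arbitrary measurable $\muhat$ a product $D\bar G_m$ with $\E D^2$ small can still be of order $\sqrt{\E D^2}\,m^{1/2-\beta}$, and this exceeds the bound when $D$ is small. This term is the main obstacle.

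My first attempt is a Bahadur-type representation. Write $\ShatM_{(k)}-q(D)=-\{\Hhat(D,q(D))-H(D,q(D))\}/\partial_sH+R$, where the remainder $R$ is quadratic and bounded through Assumption-type density bounds from Theorem~\ref{thm:cmr}'s proof. The first-order piece is split into an even part, which is uniform in $d$ and of order $r_m$ in $L^1$ by the Hermite rank-two argument applied with a chaining over a grid in $d$ of mesh $m^{-1}$, and the odd part $\propto D\bar G_m$. For the odd part I would exploit that $\bar G_m$ decomposes into a component measurable with respect to $\Fcal_{n}$ and an independent innovation part. The independent part gives $\E\abs{D\,U}\le \E\abs D\,\|U\|_2$. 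The predictable part must be absorbed by showing that, at the given order, the actual quantity needs only $D^2+\bar G_m^2$ weighted by $d$-derivatives vanishing at $0$. Concretely, the coefficient $\phi(z-d)-\phi(z+d)$ is accompanied by the even $(\bar G_m)^2$-type correction of $H$ at shift $\bar G_m$. Completing the square, the empirical quantile behaves like $q(D-\bar G_m')$ for an effective shift, which is $O((D-\bar G_m)^2)\le 2D^2+2\bar G_m^2$. With $\E\bar G_m^2\asymp m^{1-2\beta}\le r_m$ when $\beta<3/4$, I expect this is the route that works, with the case $\beta\ge3/4$ requiring the cruder but adequate $m^{-1/2}$ tail.

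Finally, for the three estimators I would show $\E(\muhat-\mu)^2\le Cn^{1-2\beta}$. For the sample mean this is the variance computation $n^{-2}\sum_{s,t}\rho(s-t)\asymp n^{1-2\beta}$. For the empirical median and the quantile midpoint, use the Dehling--Taqqu reduction principle: the Bahadur representation $\muhat-\mu=-\bar G_n\cdot(\text{const})+O_{L^2}(r_n+\text{higher rank})$, with the remainder of smaller order, gives the same rate. Substituting into \eqref{eq:secondorder} completes the proof.
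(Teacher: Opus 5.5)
\textbf{Main obstacle.} The step you single out as the main obstacle is not one. In every regime of \eqref{eq:rm}, $m^{1-2\beta}\le r_m(\beta)$: it is an equality for $\beta<3/4$, and $m^{1-2\beta}\le m^{-1/2}\le r_m(\beta)$ for $\beta\ge3/4$. So your own bound $\E\abs{D\bar G_m}\le\frac12\E D^2+\frac12\E\bar G_m^2\le C\{\E D^2+r_m(\beta)\}$ already disposes of the odd first-order term. This is exactly how the paper finishes, through $(m^{1-2\beta}\E g^2)^{1/2}\le\{m^{1-2\beta}+\E g^2\}/2\le\{r_m(\beta)+\E g^2\}/2$ with $g=\muhat-\mu$ (your $D$). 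The effective-shift and completing-the-square detour is therefore unnecessary and, as written, only heuristic.

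\textbf{The genuine gap.} Since $D$ is a function of the training block, which is correlated with the calibration block, your Hermite bounds cannot simply be evaluated at $d=D$, because they hold only for each fixed $d$. You need a bound on $\E\sup_{\abs{d}\le\delta}$ of the rank-$\ge2$ remainder. Your Bahadur remainder needs the same uniform oscillation control, which density bounds alone do not provide. With only variance bounds, chaining over a grid of mesh $m^{-1}$ does not deliver order $r_m(\beta)$. The increments of the remainder have variance at most of order $r_m(\beta)^2\abs{d-d'}$, so each dyadic level contributes of order $r_m(\beta)$, and second-moment chaining yields $r_m(\beta)\log m$; a plain union bound over the grid gives $\sqrt m\,r_m(\beta)$. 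Such a logarithmic loss already fails to give \eqref{eq:secondorder}, and you supply no higher-moment increment bounds for these non-polynomial functionals under long memory.

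\textbf{The missing idea.} Condition the whole empirical distribution function on $\Fcal_n$ (the training innovations together with the learner's randomness), not only $\bar G_m$. Then:
\begin{itemize}
\item $g$ is a constant, and $G_{n+i}=\Pi_i+U_i$ with $\Pi_i$ $\Fcal_n$-measurable and $(U_i)$ a Gaussian vector independent of $\Fcal_n$.
\item The thresholds $a(g)\pm t$ (your $q(D)\pm t$) are $\Fcal_n$-measurable. A conditional two-threshold Chebyshev bound at two fixed thresholds therefore suffices, with no uniformity in $d$ and no Bahadur representation.
\item The conditional bias of $\Hhat$ at these thresholds is at most $K_1(\abs{g}T_1+\Pi_2+\bar\tau)$. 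This comes from a second-order Taylor expansion in $\Pi_i$ whose linear coefficient vanishes at $g=0$ and is therefore of order $\abs{g}$.
\item The conditional variance is at most $K_2\{r_m(\beta)^2+m^{1-2\beta}(\Pi_2+g^2)\}$. The first Hermite coefficient of the $i$-th indicator is of order $\abs{\Pi_i-g}$, and the higher ones are controlled by $\sum_{i,j}R_{ij}^2$.
\end{itemize}
Integrating the conditional tail gives $\E\abs{\qR-a(g)}\le C\{r_m(\beta)+(m^{1-2\beta}\E g^2)^{1/2}\}$, and the AM--GM step above finishes.

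\textbf{Median and midpoint.} Your treatment of these centers has a similar weakness. The Dehling--Taqqu reduction principle is an almost-sure or in-probability statement. Converting it into $\E(\muhat-\mu)^2\le Cn^{1-2\beta}$ for every $n$ requires tail control, which Chebyshev alone provides only up to a logarithm. The paper instead obtains sub-Gaussian concentration of the training empirical distribution function, with variance proxy $\lambda_n/n\le Cn^{1-2\beta}$, from a Gaussian H\"older inequality, and then integrates the tail.
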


Theorem~\ref{cor:rates} is proved in Appendix~\ref{app:ratesproof}. Under this model, the center enters the length through its mean squared error, instead of its absolute error as in Section~\ref{sec:main}, and with the midpoint as center the theorem covers CQR with the empirical training quantiles as fitted endpoints. With blocks of comparable size, $n\asymp m$, the length error is at most of the order $r_m(\beta)$, which Theorem~\ref{cor:rates} gives for a known center, although the center converges only at the rate $n^{1/2-\beta}$ when $a_j\asymp j^{-\beta}$; for $\beta>3/4$, the rate $m^{-1/2}$ of independent scores survives both the long memory and the estimated center. To our knowledge, this is the first theoretical analysis of conformal interval length dedicated to long memory. The coverage error of the interval is at most of the same order as the right-hand side of \eqref{eq:secondorder} (Appendix~\ref{app:coverageproof}), so the interval also attains the nominal level at this rate.

\begin{proposition}[Lower bound]\label{prop:lower}
Assume \eqref{eq:linearprocess}, and let $\muhat$ be as in Theorem~\ref{cor:rates}. For all $n$ and all $m\ge m_0$, with $m_0$ as in Theorem~\ref{cor:rates},
\begin{equation}\label{eq:lowerrisk}
\E\abs{\len\ChatM-2z_\alpha}\ge c_1\E\min\{(\muhat-\mu)^2,1\}-C\big[r_m(\beta)+\{m^{-(2\beta-1)}\E(\muhat-\mu)^2\}^{1/2}\big],
\end{equation}
where $c_1>0$ depends only on $\alpha$ and $C$ only on $\alpha$, $\beta$, $A$ and $a_0$. If, in addition, $a_j\ge c(1+j)^{-\beta}$ for all $j\ge j_0$, with $c>0$, and $\muhat=T(Y_1,\ldots,Y_n)$, where $T$ is nondecreasing in each coordinate, satisfies $T(y_1+h,\ldots,y_n+h)=T(y_1,\ldots,y_n)+h$ for every real $h$, and has $\E(\muhat-\mu)^2\le C_gn^{-(2\beta-1)}$, then for all $n\ge n_0$ and $m\ge Kn^{\max(1,4\beta-2)}\log(n+1)$,
\begin{equation}\label{eq:lowerbound}
\E\abs{\len\ChatM-2z_\alpha}\ge c_3\big\{n^{-(2\beta-1)}+r_m(\beta)\big\},
\end{equation}
where $n_0$, $K$ and $c_3>0$ depend only on $\alpha$, $\beta$, $A$, $a_0$, $c$, $j_0$ and $C_g$.
\end{proposition}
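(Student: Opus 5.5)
Write $D=\muhat-\mu$ and $\len\ChatM-2z_\alpha=2(\ShatM_{(k)}-z_\alpha)$. The plan is to approximate the calibrated radius $\ShatM_{(k)}$ by the population $(1-\alpha)$-quantile of the shifted residuals $\abs{G_t-D}$, computed with $D$ held fixed. Let $\psi(d)$ denote this quantile, so that $\psi$ solves $\Phi(\psi-d)-\Phi(-\psi-d)=1-\alpha$. The function $\psi$ is even and smooth, $\psi(0)=z_\alpha$, and a Taylor expansion gives
\[
\psi(d)-z_\alpha=\kappa_\alpha d^2+O(d^4),\qquad \kappa_\alpha=z_\alpha/2>0 .
\]
Moreover $\psi(d)-z_\alpha\ge c_1\min\{d^2,1\}$ for all $d$, since $\psi$ is strictly increasing in $\abs d$ and grows like $\abs d$ for large $\abs d$. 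This gives the main term $c_1\E\min\{D^2,1\}$. It remains to bound the calibration error $\E\abs{\ShatM_{(k)}-\psi(D)}$ by $C[r_m(\beta)+\{m^{-(2\beta-1)}\E D^2\}^{1/2}]$; then \eqref{eq:lowerrisk} follows from $\E\abs{\ShatM_{(k)}-z_\alpha}\ge\E\{\psi(D)-z_\alpha\}-\E\abs{\ShatM_{(k)}-\psi(D)}$.

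\emph{Step 1 (calibration error).} Use the inversion argument of Theorem~\ref{thm:cmr}. On the event that $\ShatM_{(k)}$ deviates from $\psi(D)$ by more than $t$, the empirical distribution function $\Fhat(s;d)=m^{-1}\sum_t\ind{\abs{G_{n+t}-d}\le s}$ deviates from its mean by order $f_\alpha t$ at $s=\psi(d)\pm t$, $d=D$. Expand the indicator in Hermite polynomials of $G_{n+t}$. The rank-one part is $m^{-1}\sum_t\{\phi(s-d)-\phi(-s-d)\}G_{n+t}$; its coefficient vanishes at $d=0$ by symmetry and is $O(\abs d)$ in general. It therefore contributes $\abs D\cdot\abs{\bar G_{\mathrm{cal}}}$, whose expectation is at most $(\E D^2)^{1/2}\,C m^{1/2-\beta}$ by Cauchy--Schwarz and the long-memory variance of the calibration mean. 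The rank-two and higher parts give $r_m(\beta)$, by the Hermite computation behind \eqref{eq:rm}.

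The correlation between $D$ and the calibration block must be handled uniformly in $d$. This needs a supremum over $d$ in a compact set of the higher-order remainder, which I would get by chaining over a grid in $(s,d)$ combined with monotonicity in $s$; large $\abs D$ is absorbed by the truncation $\min\{\cdot,1\}$. Converting the tail bounds into an expectation bound, as for \eqref{eq:cmrlength}, completes \eqref{eq:lowerrisk}.

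\emph{Step 2 (sharp lower bound).} The monotonicity and equivariance assumptions on $T$ make $D=T(G_1,\ldots,G_n)$ a nondecreasing function of a Gaussian vector, so its correlation with the mean $\bar G_n$ is controlled. Under $a_j\ge c(1+j)^{-\beta}$, $\Var(\bar G_n)\ge c\,n^{-(2\beta-1)}$. I would show $\E\min\{D^2,1\}\ge c\,n^{-(2\beta-1)}$ as follows. The covariance of $D$ with $\bar G_n$ equals $n^{-1}\sum_i\Cov(D,G_i)$. By Gaussian integration by parts, $\Cov(D,G_i)=\sum_j\Sigma_{ij}\E\partial_jT$, and equivariance gives $\sum_j\partial_jT=1$ with $\partial_jT\ge0$, so $\Cov(D,\bar G_n)\ge\min_j n^{-1}\sum_i\Sigma_{ij}\ge c\,n^{1-2\beta}$. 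Cauchy--Schwarz then gives $\E D^2\ge c\,n^{-(2\beta-1)}$, and the truncation costs only a higher-order amount because $\E D^4\lesssim(\E D^2)^2$ via Gaussian hypercontractivity-type bounds for Lipschitz $T$, or by a direct argument. With $m\ge Kn^{\max(1,4\beta-2)}\log(n+1)$, the error terms are small: $r_m\lesssim n^{-(2\beta-1)}/K$-small, and $\{m^{-(2\beta-1)}n^{-(2\beta-1)}\}^{1/2}\le n^{-(2\beta-1)}\cdot o(1)$ once $m\gg n^{2\beta-1}$. This gives $c_3n^{-(2\beta-1)}$.

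The remaining piece is a lower bound of order $r_m(\beta)$ even when $D=0$. For it, I would use the lower bound for the Hermite-rank-two term. For $\beta<3/4$ this is a Rosenblatt-type limit with nondegenerate variance, and for $\beta>3/4$ it is a CLT. Combined with a Bahadur representation, this shows $\E\abs{\ShatM_{(k)}-\psi(D)}\ge c\,r_m$ when $m$ is large relative to $n$. The $r_m$ and $D^2$ contributions are then added by noting that the calibration fluctuation has mean zero to leading order while $\psi(D)-z_\alpha\ge0$, so $\E\abs{X+Y}\ge\max(\E Y,\ c\,\E\abs X)$ up to constants.

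The main obstacle is Step 1's uniformity in $d$ under dependence between $D$ and the calibration block, together with the log factor in the range of $m$. I would try chaining over $d$ first.
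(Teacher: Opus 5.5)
Your architecture matches the paper's. You compare $\qR$ with the shifted population radius $\psi(D)$ (the paper's $a(g)$), use evenness to get $\psi(D)-z_\alpha\ge c_1\min(D^2,1)$, and subtract $\E\abs{\qR-\psi(D)}$. The gap is in how you bound that calibration error.

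\textbf{Step 1.} You expand the indicators unconditionally in Hermite polynomials of $G_{n+t}$. That leaves a rank-$\ge2$ remainder evaluated at the random point $(\psi(D)\pm t,D)$, and $D$ is correlated with the calibration block, so you need a supremum over $(s,d)$. Chaining with second-moment increments generally loses a logarithmic factor. A uniform reduction principle with remainder of exact order $r_m(\beta)$ is a substantial result that you do not supply. So \eqref{eq:lowerrisk} does not follow as sketched.

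The missing idea is to condition on $\Fcal_n$, the innovations up to time $n$ together with the training randomness. Then $D$ is a constant and $G_{n+i}=\Pi_i+U_i$, with $\Pi_i$ $\Fcal_n$-measurable and $(U_i)$ Gaussian and independent of $\Fcal_n$. At any $\Fcal_n$-measurable threshold, the empirical distribution function of $\abs{G_{n+i}-D}$ has the following conditional moments:
\begin{itemize}
\item its conditional mean differs from $H_D$ by a bias of order $m^{-1}\sum_i(\abs D\abs{\Pi_i}+\Pi_i^2+\E\Pi_i^2)$;
\item its conditional variance is $O\{r_m(\beta)^2+m^{1-2\beta}(m^{-1}\sum_i\Pi_i^2+D^2)\}$, because the first Hermite coefficient in $U_i/s_i$ is of order $\abs{\Pi_i-D}$ (Lemmas~\ref{lem:condmean} and~\ref{lem:condvar}).
\end{itemize}
A conditional Chebyshev inversion then gives Lemma~\ref{lem:finitecal} with no supremum and for every $D$. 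It uses the uniform density bound of Lemma~\ref{lem:shiftedradius}(ii) and a conditional second-moment bound for large deviations. Large $\abs D$ is not absorbed by the truncation in the main term, as you suggest; the calibration error has to be controlled for all $D$.

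\textbf{Step 2.} Your Gaussian integration-by-parts route is a valid alternative to the paper's, with repairs.
\begin{itemize}
\item Monotonicity and equivariance make $T$ 1-Lipschitz in the sup norm, which justifies $\Cov(D,G_i)=\sum_j\Sigma_{ij}\E\partial_jT$.
\item The Cauchy--Schwarz step needs the upper bound $\Var\Gbar\le Cn^{1-2\beta}$, not a lower bound. With it you get $\Var D\ge cn^{1-2\beta}$ for large $n$.
\item Your truncation step fails as justified. There is no bound $\E D^4\lesssim(\E D^2)^2$ for a general Lipschitz $T$; hypercontractivity gives this only on finite chaoses. Gaussian concentration gives only a variance proxy of order $\lambda_{\max}(\Sigma)\asymp n^{2-2\beta}$.
\item The fix: apply the same identity to $\max\{-1,\min(D,1)\}$, whose partial derivatives sum to $\ind{\abs D<1}$, and use $\Prob(\abs D<1)\ge1/2$ for large $n$.
\end{itemize}
The paper instead writes $G_t=W_t+b_tV$ with $V=n^{-1/2}\sum_{s=-2n+1}^{-n}\varepsilon_s$ independent of $W$ and $b_t\ge h_n\asymp n^{1/2-\beta}$. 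This gives $\Prob(\abs D\ge h_n)\ge\Phi(-1)$ directly, an anti-concentration statement that makes truncation free.

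\textbf{The separate $r_m(\beta)$ lower bound.} This piece is unnecessary. For $m\ge Kn^{\max(1,4\beta-2)}\log(n+1)$ and $K$ large, $r_m(\beta)\le n^{-(2\beta-1)}$, so the training term alone gives \eqref{eq:lowerbound}. The logarithm in the range of $m$ is needed only at $\beta=3/4$, where $r_m(\beta)^2=\log(m+1)/m$; it has nothing to do with chaining. Moreover, the Rosenblatt and CLT limits you invoke are asymptotic and give no bound at fixed $n\ge n_0$.

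\textbf{Arithmetic slip.} $\{m^{-(2\beta-1)}n^{-(2\beta-1)}\}^{1/2}=o(n^{-(2\beta-1)})$ requires $m\gg n$, not $m\gg n^{2\beta-1}$.
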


Proposition~\ref{prop:lower} is proved in Appendix~\ref{app:lowerproof}. The sample mean, the empirical median and the midpoint satisfy its conditions on the center, so Theorem~\ref{cor:rates} and Proposition~\ref{prop:lower} together show that $n^{-(2\beta-1)}+r_m(\beta)$ is the exact rate of their length error when the calibration block is large relative to the training block. By Proposition~\ref{prop:lower}, no monotone and translation-equivariant center whose mean squared error is of the same order $n^{-(2\beta-1)}$ does better. To our knowledge, Theorem~\ref{cor:rates} and Proposition~\ref{prop:lower} give the first exact rate for the length of a conformal interval under dependence.

\section{Numerical Studies}\label{sec:experiments}

We examine the theory in three simulation studies whose oracle intervals are known: the coverage and the length in four dependence setups, two of which the mixing theory does not cover (Study~1), the sharper length rate under long memory (Study~2) and the lower bound of Proposition~\ref{prop:lower} (Study~3). Throughout, $\alpha=.1$, the training and calibration blocks are adjacent, each configuration uses $1000$ independent trajectories, which give Monte Carlo standard errors and pointwise 95\% intervals, and each procedure is compared with its own oracle interval. Appendix~\ref{app:experiments} gives the designs and the complete results.

\begin{figure}[t]
\begin{center}
\includegraphics[width=.9\linewidth]{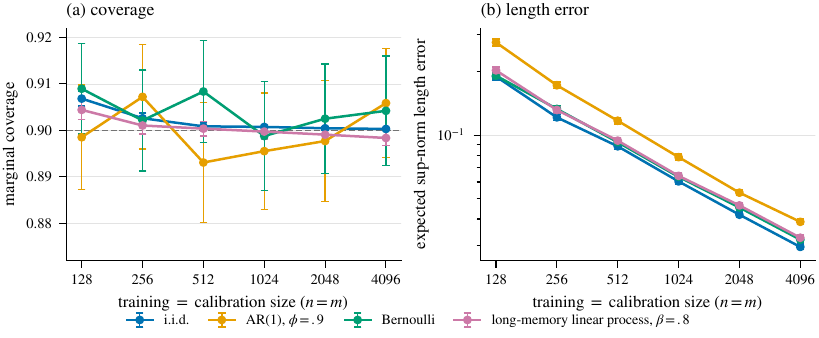}
\end{center}
\caption{Study~1, CQR with quadratic quantile regression, $n=m\in\{128,\ldots,4096\}$. (a) Marginal coverage; the dashed line is the nominal level. (b) Expected sup-norm length error, on a logarithmic scale. Pointwise 95\% Monte Carlo intervals.}
\label{fig:g1}
\end{figure}

\paragraph{Study 1.} This study illustrates that split conformal regression works well on processes beyond the existing theory. The observations are $Y_t=f(X_t)+s(X_t)h(U_t)$ with $f(x)=x+x^2$, $s(x)=1+.3x^2$ and $h(u)=u+u^2/2$, where the covariates $X_t$ are \iid\ uniform on $[-1,1]$ and independent of $(U_t)$. The temporal dependence is controlled by $(U_t)$, and we consider four setups for it: independent variables, an AR(1) process, the Bernoulli autoregression of Section~\ref{sec:main} and a long-memory linear process of the form \eqref{eq:linearprocess} with $\beta=.8$, each transformed to the uniform law on $[0,1]$. The exchangeable theory covers only the independent setup and the mixing theory also covers AR(1), but neither covers the Bernoulli setup or the long-memory linear process. Assumptions~\ref{ass:train}--\ref{ass:fdm} hold in all four setups, so the theory of this paper covers all of them. The procedure is CQR with quadratic quantile regression at the levels $.05$ and $.95$, which is correctly specified. For $n=m\in\{128,\ldots,4096\}$, we estimate the marginal coverage and the expected sup-norm length error $\E\sup_x\abs{\len\ChatQ(x)-\len\CQ(x)}$ of Theorem~\ref{thm:length}. Appendix~\ref{app:study1} gives the details and the paired CMR results.

Figure~\ref{fig:g1} shows that the marginal coverage stays within $.01$ of the nominal level in all four setups, including the two that only the theory of this paper covers, and that the length error decreases toward zero in all four. For the long-memory linear process, the calibration rate $\rho_m$ of Theorem~\ref{thm:length} is $m^{-.3}$, against $m^{-1/2}$ in the other three setups, but over the tested sizes its length error decreases at about the same rate as theirs, with fitted decay exponents between $.51$ and $.53$ in all four setups, whereas for this process the length error of the paired CMR interval decreases like $m^{-.3}$ (Appendix~\ref{app:study1}). The same contrast appears in the oracle calibration step: in this setup, $U_t=\Phi(G_t)$ with $G_t$ of the form \eqref{eq:linearprocess}, and the CQR oracle interval contains $Y_t$ exactly when $\abs{G_t}\le z_\alpha$, an event that is symmetric in $G_t$, whereas the corresponding event for the CMR oracle interval is not, because $h$ is not linear. This agrees with the motivation of Section~\ref{sec:gaussian}: long-memory linear processes require a separate analysis to obtain a sharper rate.

\begin{figure}[t]
\begin{center}
\includegraphics[width=.9\linewidth]{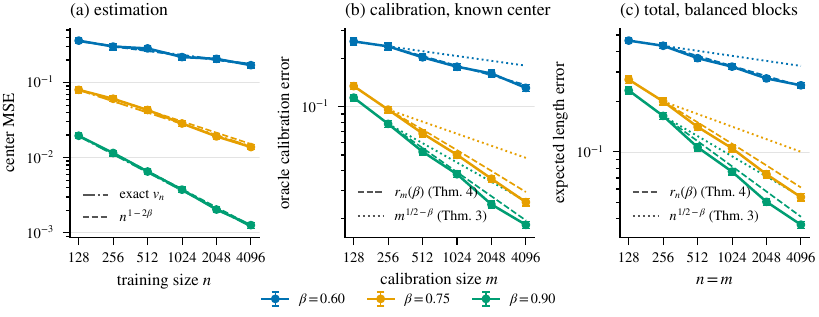}
\end{center}
\caption{Study~2, CMR with the sample-mean center, Gaussian linear process \eqref{eq:linearprocess} with $\beta\in\{.6,.75,.9\}$. (a) Mean squared error of the center, with its exact value $v_n$ (dash-dotted) and $n^{-(2\beta-1)}$ (dashed). (b) Oracle calibration error $\E\abs{\qRor-z_\alpha}$, with $r_m(\beta)$ (dashed) and $m^{1/2-\beta}$ (dotted). (c) Length error $\E\abs{\len\ChatM-2z_\alpha}$ with $n=m$, with $r_n(\beta)$ (dashed) and $n^{1/2-\beta}$ (dotted). The guides, matched to the curves at $256$, compare decay, not constants. Pointwise 95\% Monte Carlo intervals.}
\label{fig:l1}
\end{figure}

\paragraph{Study 2.} This study examines the model of Section~\ref{sec:gaussian} for two purposes: to measure how slowly the center converges under long memory, and to check whether the length error nevertheless attains the rate of Theorem~\ref{cor:rates}. The observations are $Y_t=\mu+G_t$, where $G_t$ is a Gaussian linear process of the form \eqref{eq:linearprocess} whose coefficients $a_j$ are asymptotically proportional to $j^{-\beta}$, with $\beta\in\{.6,.75,.9\}$, one value in each case of \eqref{eq:rm}. Appendix~\ref{app:conventions} gives the coefficients and the exact simulation of the paths. The procedure is CMR with the training sample mean as the center. For $n=m\in\{128,\ldots,4096\}$, we measure three quantities on the same trajectories: the mean squared error of the center, the oracle calibration error $\E\abs{\qRor-z_\alpha}$ and the interval length error $\E\abs{\len\ChatM-2z_\alpha}$. Figure~\ref{fig:l1} compares them with the rates of Theorems~\ref{thm:cmr} and~\ref{cor:rates}.

There are two findings. First, the center converges slowly: its mean squared error decreases like $n^{-(2\beta-1)}$, much more slowly than under short memory, in agreement with its exact value. Second, the oracle calibration error and the length error follow the rates $r_m(\beta)$ and $r_n(\beta)$ of Theorem~\ref{cor:rates}, and not the slower rates $m^{1/2-\beta}$ and $n^{1/2-\beta}$ of Theorem~\ref{thm:cmr}. We repeated the study with two non-Gaussian innovation laws, keeping the coefficients and taking the training median as the center. With Laplace innovations, which are symmetric, the findings are the same, as Section~\ref{sec:gaussian} expects. Centered exponential innovations are asymmetric by design, to test whether the improvement of Section~\ref{sec:gaussian} rests on symmetry: at $\beta=.75$ and $.9$, the length error then follows the rate of Theorem~\ref{thm:cmr} more closely than that of Theorem~\ref{cor:rates}. At $\beta=.6$ it still follows Theorem~\ref{cor:rates} over the tested sizes, because the stronger memory spreads each innovation over many lags and brings the law of the series closer to a Gaussian law (Appendix~\ref{app:extension}).

\begin{figure}[t]
\begin{center}
\includegraphics[width=.9\linewidth]{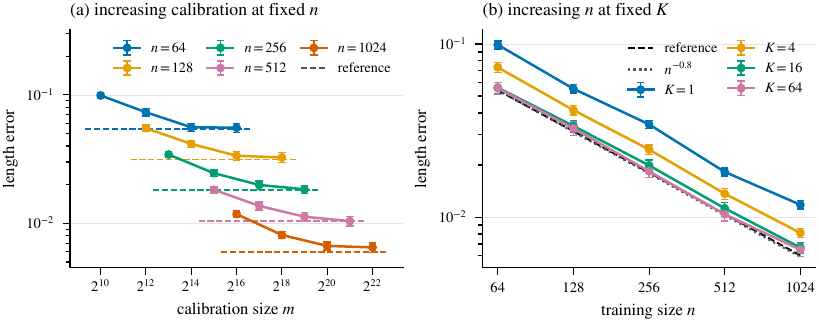}
\end{center}
\caption{Study~3, CMR with the sample-mean center, $\beta=.9$ and $m\approx Kn^{1.6}$. (a) Length error against $m$ for each training size $n$, with the reference of each $n$ (dashed, same color). (b) The same errors against $n$ for each $K$, with the reference (dashed) and the rate $n^{-.8}$ matched to the reference at $n=64$ (dotted). Pointwise 95\% Monte Carlo intervals.}
\label{fig:s3}
\end{figure}

\paragraph{Study 3.} This study examines the lower bound of Proposition~\ref{prop:lower}. Calibration does not remove the contribution from center estimation, so however long the calibration block, the length error remains at least of the order $n^{-(2\beta-1)}$, which is the order of the mean squared error of the center. We keep the model of Study~2 with $\beta=.9$ and the sample-mean center, and for each training size $n\in\{64,\ldots,1024\}$ we take the calibration sizes $m\approx Kn^{1.6}$ with $K\in\{1,4,16,64\}$, which follow the threshold of Proposition~\ref{prop:lower} up to its logarithmic factor and reach thousands of times the training size. The reference is the length error of the interval calibrated on an infinitely long block, which depends only on the center and is close to $z_\alpha$ times its mean squared error. Figure~\ref{fig:s3} shows that once the calibration block is long enough, the length error stops decreasing and stays within 8\% of this reference at every training size: more calibration data only reduces the error due to calibration, which halves with each fourfold increase of $m$, while the reference, set by the center, does not move. The reference decreases at the rate $n^{-.8}=n^{-(2\beta-1)}$: from $n=64$ to $1024$, the length error with the largest calibration blocks falls by a factor of $8.6$, against $9.2$ for $n^{-.8}$. Studies~2 and~3 thus illustrate both halves of the exact rate of Section~\ref{sec:gaussian}: the length error attains the rate of Theorem~\ref{cor:rates}, and the contribution of the center, which enters at second order, remains however much calibration data is added. Appendix~\ref{app:study3} gives the design, the computation of the reference and the complete results.

\paragraph{Real Data Analysis.} We also apply CQR and CMR to day-ahead forecast errors of electricity load, which have long memory, and check both the coverage and the interval length. Both procedures cover close to the nominal level, and the mean length of CMR is within 2\% of a proxy for its oracle length computed in hindsight (Appendix~\ref{app:application}).

\section{Discussion}\label{sec:discussion}

The functional dependence measure replaces mixing as the theoretical toolbox for split conformal regression on time series: it is computed from the model that generates the series, the guarantees hold for processes that are not mixing, and the same calibrated interval satisfies non-asymptotic coverage and length guarantees. Under Gaussian long memory, the length converges faster than the estimated center, at a rate that a matching lower bound shows to be exact for long calibration blocks.

The theory has two limitations. First, the coverage guarantees are marginal and concern one split of a stationary series; they extend to any fixed forecast horizon, but rolling refits and nonstationary series are not covered. Second, the sharper rate of Section~\ref{sec:gaussian} is proved for Gaussian linear processes with a constant covariate and rests on the symmetry of their law; Study~2 supports symmetric non-Gaussian innovations numerically, and a proof for them would replace the Hermite expansions of Appendix~\ref{app:hermite} by the expansions of the empirical process of linear processes \citep{wu2003empirical}.

\clearpage
\bibliography{references}
\bibliographystyle{iclr2027_conference}

\clearpage
\appendix
\section{Proofs for Sections~\ref{sec:setup} and \ref{sec:main}}\label{app:quantile}

This appendix proves the pathwise comparison \eqref{eq:comparison}--\eqref{eq:comparisonM}, the bound on the calibration variance stated before Theorem~\ref{thm:coverage}, and Theorems~\ref{thm:coverage}--\ref{thm:cmr} together with the convergence statements after Theorem~\ref{thm:length}. Appendix~\ref{app:assumptions} verifies the three assumptions for the learners and models of the paper.

The analysis of the calibration step combines known tools. The variance bound for the empirical distribution function of the oracle scores (Lemma~\ref{lem:sigmarate}) uses the method of \citet{wu2005nonlinear}, who bounds partial sums of a causal process, its empirical distribution function included, through their projections onto individual innovations and the predictive dependence measure; when this measure is summable, the method gives $\rho_m=m^{-1/2}$, and Proposition~\ref{prop:convolution} extends the bound to polynomial decay, which gives the two slower regimes of $\rho_m$. The pathwise comparison \eqref{eq:comparison}--\eqref{eq:comparisonM} is the deterministic comparison also used by \citet{yao2026nonasymptotic} for independent observations, and the two-threshold bound of Appendix~\ref{app:twothreshold} is the Chebyshev bound for an empirical quantile. What is new is the formulation through the predictive dependence measure of the thresholded scores, with the transfer conditions of Proposition~\ref{prop:sufficient}; the non-asymptotic combination of the two steps, including the window argument of Appendix~\ref{app:coverage}, which gives coverage without a density for the score at the test time; and the length statements.

\subsection{Proof of Equations \texorpdfstring{\eqref{eq:comparison} and \eqref{eq:comparisonM}}{(1) and (2)}}\label{app:pathwise}

\begin{proof}[Proof of \eqref{eq:comparison}]
For any four real numbers, $\abs{\max(a,b)-\max(c,d)}\le\max(\abs{a-c},\abs{b-d})$. Applying this inequality to the two endpoint residuals gives $\abs{\ShatQ_t-\SQ_t}\le\EQ$ at every calibration time and at the test time. The $k$-th smallest values then differ by at most $\EQ$, also when ties occur: at least $k$ of the oracle scores $\SQ_{n+1},\ldots,\SQ_{n+m}$ are at most $\SQ_{(k)}$, the corresponding fitted scores are at most $\SQ_{(k)}+\EQ$, and therefore $\ShatQ_{(k)}\le\SQ_{(k)}+\EQ$; interchanging the fitted and the oracle scores gives the reverse inequality, so $\abs{\ShatQ_{(k)}-\SQ_{(k)}}\le\EQ$.

For each $x$, write $w(x)=u_\alpha(x)-\ell_\alpha(x)\ge0$ and $\widehat{w}(x)=\uhat(x)-\lhat(x)$. Then $\len\CQ(x)=w(x)$ and $\len\ChatQ(x)=[\widehat{w}(x)+2\ShatQ_{(k)}]_+$. Since the positive-part map is $1$-Lipschitz,
\begin{align*}
\abs{[\widehat{w}(x)+2\ShatQ_{(k)}]_+-w(x)}&\le\abs{\widehat{w}(x)-w(x)}+2\abs{\ShatQ_{(k)}}\\
&\le 2\EQ+2\abs{\ShatQ_{(k)}-\SQ_{(k)}}+2\abs{\SQ_{(k)}}\le 4\EQ+2\abs{\SQ_{(k)}}.
\end{align*}
Taking the supremum over $x$ proves the length bound in \eqref{eq:comparison}, including the cases of quantile crossing and empty fitted intervals. Since the bound holds simultaneously for all $x$ in the stated domain, it applies to an adjacent, dependent test covariate without any train--test decoupling argument.
\end{proof}

\begin{proof}[Proof of \eqref{eq:comparisonM}]
The proof follows that of \eqref{eq:comparison}: $\abs{\ShatM_t-\SM_t}\le\abs{\muhat(X_t)-\mu(X_t)}\le\EM$ at every time, so the order-statistic step gives $\abs{\ShatM_{(k)}-\SM_{(k)}}\le\EM$; moreover $\len\ChatM(x)=2\ShatM_{(k)}$ and $\len\CM(x)=2q_\alpha$ for every $x$, and the triangle inequality completes the proof.
\end{proof}

\subsection{The Calibration Variance under Functional Dependence}\label{app:projection}

This subsection fixes the notation of the remaining proofs and bounds the variance of the empirical distribution function of the oracle scores near the threshold under Assumption~\ref{ass:fdm}.

In the remaining appendices, $\Sor_t$ denotes the centered oracle score of the procedure under study, $\SQ_t$ for CQR and $\SM_t-q_\alpha$ for CMR; $\Shat_t$ denotes the corresponding fitted score, $\ShatQ_t$ or $\ShatM_t-q_\alpha$; $\Qor$ and $\Shat_{(k)}$ are their $k$-th smallest values over the calibration block; $F$ is the distribution function of $\Sor_t$, so that $F(0)=1-\alpha$ in both cases; and $E_n$ is the training error, $\EQ$ or $\EM$. For CMR the centering cancels in the algorithm, whose radius is $\ShatM_{(k)}=q_\alpha+\Shat_{(k)}$, and $\Qor=\SM_{(k)}-q_\alpha$ is the calibration error of the oracle radius; the predictive dependence measure $\omega(j)$ of \eqref{eq:omega} is unchanged by the centering. With this notation, the proofs of \eqref{eq:comparison} and \eqref{eq:comparisonM} give, for both procedures,
\begin{equation}\label{eq:centeredcomparison}
\abs{\Shat_t-\Sor_t}\le E_n\quad(n<t\le n+m+1),\qquad \abs{\Shat_{(k)}-\Qor}\le E_n,
\end{equation}
and the length error is at most $4E_n+2\abs{\Qor}$ for CQR and equal to $2\abs{\Shat_{(k)}}\le2E_n+2\abs{\Qor}$ for CMR. The coverage argument of Appendix~\ref{app:coverage} involves $2E_n+\abs{\Qor}$ for both procedures, because both the fitted test score and the fitted calibration threshold are perturbed.

Let
\[
\Fhat(s)=\frac1m\sum_{i=1}^m\ind{\Sor_{n+i}\le s},\qquad \sigma_m^2=\sup_{\abs{s}\le r}\Var\Fhat(s),
\]
be the empirical distribution function of the oracle scores on the calibration block and its largest variance near the threshold; stationarity gives $\E\Fhat(s)=F(s)$. The bound to be proved is $\sigma_m\le C_\sigma\rho_m$ (Lemma~\ref{lem:sigmarate}). Let $\calP_jW=\E(W\mid\Fcal_j)-\E(W\mid\Fcal_{j-1})$ denote the projection onto the innovation at time $j$, and let
\[
\pi_j(s)=\norm{\calP_0\ind{\Sor_j\le s}}_2,\qquad j\ge0,
\]
be the projection norm of the thresholded score, which measures how much the innovation $\varepsilon_0$ predicts about the event $\{\Sor_j\le s\}$. Let $J_j(s)=\ind{\Sor_j\le s}$ and $J_j^*(s)=\ind{\Sor_j^*\le s}$, the latter computed from the coupled trajectory of Section~\ref{sec:fdm}, and let $W_j(s)=\Prob(\Sor_j\le s\mid\Fcal_0)$ and $W_j^*(s)=\Prob(\Sor_j^*\le s\mid\Fcal_0^*)$ be the two conditional probabilities compared in \eqref{eq:omega}, so that $\omega(j)=\sup_s\norm{W_j(s)-W_j^*(s)}_2$. Independence of the replaced innovation, of its copy and of the future innovations gives $W_j(s)=\E\{J_j(s)\mid\Fcal_0,\varepsilon_0'\}$, $W_j^*(s)=\E\{J_j^*(s)\mid\Fcal_0,\varepsilon_0'\}$ and $\E\{W_j^*(s)\mid\Fcal_0\}=\E\{W_j(s)\mid\Fcal_{-1}\}$, and therefore
\begin{equation}\label{eq:couplingidentity}
\calP_0J_j=\E(W_j-W_j^*\mid\Fcal_0)=\E(J_j-J_j^*\mid\Fcal_0),\qquad \pi_j(s)=2^{-1/2}\norm{W_j(s)-W_j^*(s)}_2;
\end{equation}
the second identity holds because $W_j(s)$ and $W_j^*(s)$ are the same function of $\Fcal_{-1}$ and of $\varepsilon_0$ or $\varepsilon_0'$, respectively, and are therefore independent and identically distributed given $\Fcal_{-1}$. Assumption~\ref{ass:fdm} thus gives $\sup_s\pi_j(s)\le2^{-1/2}A(1\vee j)^{-\gamma}$.

\begin{proposition}[Projection envelope]\label{prop:envelope}
For every $s$,
\begin{equation}\label{eq:envelope}
\Var\Big(\sum_{i=1}^m\ind{\Sor_{n+i}\le s}\Big)\le\sum_{j=-\infty}^{m}\Big(\sum_{i=\max(1,j)}^m\pi_{i-j}(s)\Big)^2\le m\Big(\sum_{j\ge0}\pi_j(s)\Big)^2.
\end{equation}
If the marginal density of $\Sor_t$ is at most $L$, then
\begin{equation}\label{eq:projectionbound}
\norm{W_j(s)-W_j^*(s)}_2\le\min\Big\{\{2\min(F(s),1-F(s))\}^{1/2},\ \sqrt{3}\,L^{q/[2(q+1)]}\delta_q(j)^{q/[2(q+1)]}\Big\},
\end{equation}
where $\delta_q(j)=\norm{\Sor_j-\Sor_j^*}_q$ is the functional dependence measure of the score.
\end{proposition}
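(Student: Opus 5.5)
For the variance bound \eqref{eq:envelope} I would use the martingale-difference (projection) decomposition of \citet{wu2005nonlinear}. Fix $s$ and write $J_t=\ind{\Sor_t\le s}$. Each $J_t$ is bounded and $\Fcal_t$-measurable, and $\E(J_t\mid\Fcal_{-\infty})$ is constant, since the tail $\sigma$-field of the i.i.d.\ innovations is trivial. Hence $J_t-F(s)=\sum_{j\le t}\calP_jJ_t$ in $L^2$. Summing over the calibration block and shifting the time origin to $n$ by stationarity, the centered sum is $\sum_{j\le m}D_j$ with $D_j=\sum_{i=\max(1,j)}^m\calP_jJ_{n+i}$ after reindexing. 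The $D_j$ are orthogonal martingale differences, so the variance equals $\sum_j\norm{D_j}_2^2$. The triangle inequality, together with the stationarity identity $\norm{\calP_jJ_i}_2=\norm{\calP_0J_{i-j}}_2=\pi_{i-j}(s)$, gives the first inequality. For the second, I would split the sum at $j\ge1$ and $j\le0$. By Cauchy--Schwarz in the form $(\sum_i\pi_{i-j})^2\le(\sum_{k\ge0}\pi_k)\sum_i\pi_{i-j}$, and then interchanging sums, $\sum_j\sum_i\pi_{i-j}=\sum_{i=1}^m\sum_{k\ge0}\pi_k\le m\sum_k\pi_k$. This yields $m(\sum_k\pi_k)^2$.

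For \eqref{eq:projectionbound}, the first branch follows from $W_j,W_j^*\in[0,1]$, $\E W_j=\E W_j^*=F(s)$, and the fact that $W_j,W_j^*$ are conditionally i.i.d.\ given $\Fcal_{-1}$. From these, $\norm{W_j-W_j^*}_2^2=2\E\Var(W_j\mid\Fcal_{-1})\le2\Var(W_j)\le2\E W_j^2-2F^2\le2F(1-F)\le2\min(F,1-F)$.

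For the second branch I would use conditional Jensen: $W_j-W_j^*=\E(J_j-J_j^*\mid\Fcal_0,\varepsilon_0')$, so $\norm{W_j-W_j^*}_2^2\le\E\abs{J_j-J_j^*}=\Prob(\text{$s$ lies between }\Sor_j,\Sor_j^*)$. For a threshold $\eta>0$, this event is contained in $\{\abs{\Sor_j-s}\le\eta\}\cup\{\abs{\Sor_j-\Sor_j^*}>\eta\}$. The density bound and Markov's inequality then give at most $2L\eta+\delta_q(j)^q\eta^{-q}$. Optimizing over $\eta$, for example with $\eta=(\delta_q^q/L)^{1/(q+1)}$, gives $3L^{q/(q+1)}\delta_q(j)^{q/(q+1)}$, and taking the square root produces $\sqrt3\,L^{q/[2(q+1)]}\delta_q(j)^{q/[2(q+1)]}$.

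The main technical point I expect is justifying the projection decomposition and the exact reindexing, in particular the $L^2$ convergence $\E(J_t\mid\Fcal_{-k})\to F(s)$ via the Kolmogorov zero--one law and the reverse martingale convergence theorem. The rest is routine bookkeeping.
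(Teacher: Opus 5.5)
Your proposal is correct and follows the paper's proof. It uses the same orthogonal projection decomposition with the stationarity identity $\norm{\calP_jJ_t}_2=\pi_{t-j}(s)$ for the first inequality of \eqref{eq:envelope}. For the second branch of \eqref{eq:projectionbound} it uses conditional Jensen followed by the bandwidth split $\{\abs{\Sor_j-s}\le\eta\}\cup\{\abs{\Sor_j-\Sor_j^*}>\eta\}$; let $\eta\downarrow0$ when $\delta_q(j)=0$.

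Your two deviations are minor and valid. First, the identity $\sum_{j\le m}\sum_{i=\max(1,j)}^m\pi_{i-j}(s)=m\sum_{k\ge0}\pi_k(s)$ bounds the middle term of \eqref{eq:envelope} directly, so it proves the chain exactly as stated; the paper's Minkowski step over lags bounds the variance itself. Second, your conditional-variance argument gives the slightly sharper first branch $2F(s)\{1-F(s)\}$ instead of the disagreement-probability bound $2\min\{F(s),1-F(s)\}$.
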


\begin{proof}
Fix $s$ and set $I_t=\ind{\Sor_t\le s}-F(s)$. As a square-integrable causal function of \iid\ innovations, $I_t$ has the orthogonal decomposition $I_t=\sum_{j=-\infty}^t\calP_jI_t$ in $L^2$; the backward limit of the conditional expectations is zero by triviality of the tail sigma-field of the \iid\ past. Summing over $t=1,\ldots,m$ and using orthogonality across $j$ gives
\[
\Big\|\sum_{t=1}^mI_t\Big\|_2^2=\sum_{j=-\infty}^m\Big\|\sum_{t=\max(1,j)}^m\calP_jI_t\Big\|_2^2
\le\sum_{j=-\infty}^m\Big(\sum_{t=\max(1,j)}^m\pi_{t-j}(s)\Big)^2,
\]
which is the first inequality in \eqref{eq:envelope} after a shift of the time index by stationarity. Regrouping the projections by lag and applying Minkowski's inequality,
\[
\Big\|\sum_{t=1}^mI_t\Big\|_2\le\sum_{k\ge0}\Big\|\sum_{t=1}^m\calP_{t-k}I_t\Big\|_2=\sqrt m\sum_{k\ge0}\pi_k(s),
\]
where the equality uses orthogonality over the distinct projection times $t-k$; this is the second inequality.

For \eqref{eq:projectionbound}, the representation $W_j(s)-W_j^*(s)=\E\{J_j(s)-J_j^*(s)\mid\Fcal_0,\varepsilon_0'\}$ and Jensen's inequality give $\norm{W_j(s)-W_j^*(s)}_2\le\norm{J_j-J_j^*}_2$. The two indicators have identical marginal laws, so their disagreement probability is at most both $2F(s)$ and $2(1-F(s))$. For every bandwidth $h>0$, disagreement also implies either $\abs{\Sor_j-s}\le h$ or $\abs{\Sor_j-\Sor_j^*}>h$, hence
\[
\norm{J_j-J_j^*}_2^2\le2Lh+\delta_q(j)^q/h^q.
\]
For $\delta_q(j)>0$, the choice $h=\{\delta_q(j)^q/L\}^{1/(q+1)}$ makes the right-hand side $3L^{q/(q+1)}\delta_q(j)^{q/(q+1)}$; if the dependence coefficient is zero, let $h\downarrow0$. Taking square roots and combining the two bounds proves \eqref{eq:projectionbound}.
\end{proof}

\begin{proposition}\label{prop:convolution}
If $\sup_{s}\pi_j(s)\le B(1\vee j)^{-\gamma}$ for all $j\ge0$ and some $\gamma>1/2$, then $\sigma_m\le C_\gamma B\rho_m$ with
\begin{equation}\label{eq:sigmarate}
\rho_m=\begin{cases}m^{-1/2}, & \gamma>1,\\ m^{-1/2}\log(m+1), & \gamma=1,\\ m^{1/2-\gamma}, & 1/2<\gamma<1,\end{cases}
\end{equation}
and $C_\gamma$ depending only on $\gamma$.
\end{proposition}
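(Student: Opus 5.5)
The plan is to start from the first (sharper) inequality of the projection envelope \eqref{eq:envelope}, not the second. The second one, $m(\sum_j\pi_j)^2$, is infinite when $\gamma\le1$. Fix $s$ with $\abs{s}\le r$ and write $b_k=B(1\vee k)^{-\gamma}\ge\sup_s\pi_k(s)$. Then
\[
m^2\Var\Fhat(s)\le\sum_{j=-\infty}^{m}\Big(\sum_{i=\max(1,j)}^m b_{i-j}\Big)^2 .
\]
The right-hand side no longer depends on $s$, so a bound on it bounds $\sigma_m^2$ directly. I would split the outer sum into two ranges: the indices $j\in\{1,\ldots,m\}$ (innovations inside the block), and the indices $j\le0$ (innovations from the past).

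\textbf{Range $1\le j\le m$.} Here the inner sum is a partial sum $\sum_{k=0}^{m-j}b_k\le\sum_{k=0}^{m}b_k=:\Lambda_m$. Comparing with an integral gives
\[
\Lambda_m\lesssim B\times
\begin{cases}1, & \gamma>1,\\ \log(m+1), & \gamma=1,\\ m^{1-\gamma}, & \gamma<1.\end{cases}
\]
This range therefore contributes at most $m\Lambda_m^2$. After dividing by $m^2$ and taking square roots, that is $C_\gamma B$ times $m^{-1/2}$, $m^{-1/2}\log(m+1)$ and $m^{1/2-\gamma}$ in the three cases, which are exactly the three cases of $\rho_m$ in \eqref{eq:sigmarate}.

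\textbf{Range $j\le0$.} Write $j=-h$ with $h\ge0$; the inner sum is then $\sum_{i=1}^m b_{i+h}$. I would bound it by $\min\{mb_h,\ \sum_{k>h}b_k\}$ for $\gamma>1$, and by $\min\{mb_h,\ C\,m\,h^{-\gamma}\}$ together with an integral comparison in general. For $h\le m$ I use $\sum_{i=1}^m b_{i+h}\le\Lambda_{2m}\lesssim\Lambda_m$, so these $m+1$ indices again contribute $O(m\Lambda_m^2)$. For $h>m$ I use $\sum_{i=1}^m b_{i+h}\le mBh^{-\gamma}$, and the tail contribution is
\[
\sum_{h>m}m^2B^2h^{-2\gamma}\lesssim B^2m^{3-2\gamma}.
\]
This sum converges precisely because $\gamma>1/2$.

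\textbf{Comparing the tail term with the main term.} The main step is to check that $m^{3-2\gamma}$ is dominated by $m\Lambda_m^2$ in every regime. For $\gamma<1$ both terms equal $m^{3-2\gamma}$. For $\gamma\ge1$ we have $m^{3-2\gamma}\le m\le m\Lambda_m^2$. Putting the pieces together, $m^2\sigma_m^2\le C_\gamma^2B^2m^2\rho_m^2$. The constant depends only on $\gamma$ (through the integral comparisons) and scales linearly in $B$.

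\textbf{Main obstacle.} The delicate point is the $\gamma=1$ boundary, where only the logarithmic partial sum survives and the integral bounds must be written uniformly in $m\ge1$. The cases $m=1,2$ are handled by the $(1\vee k)$ convention and the $\log(m+1)$ normalization. Apart from that, the proof is a routine Karamata-type estimate for the convolution of polynomially decaying weights. To obtain Lemma~\ref{lem:sigmarate} afterwards, I would take $B=2^{-1/2}A$, using \eqref{eq:couplingidentity}.
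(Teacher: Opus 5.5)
Your proof is correct and follows essentially the same route as the paper's: you use the first (sharper) inequality of \eqref{eq:envelope} with the envelope $B(1\vee k)^{-\gamma}$, bound the $O(m)$ near projection indices by a partial sum of the weights up to order $2m$, and bound the far indices $j<-m$ by $m$ times the tail weight, which gives the $m^{3-2\gamma}$ term that is dominated in every regime. Your choice $B=2^{-1/2}A$ for Lemma~\ref{lem:sigmarate} is only a slightly sharper constant than the paper's $B=A$.
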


\begin{proof}
Since $(1\vee j)^{-\gamma}\le2^\gamma(j+1)^{-\gamma}$, the envelope is at most $2^\gamma B(j+1)^{-\gamma}$. For projection indices $-m<j\le m$, each inner sum in \eqref{eq:envelope} is at most $2^\gamma B\sum_{k=0}^{2m}(k+1)^{-\gamma}$, and there are at most $2m$ such indices; their total contribution is bounded by $CB^2m$, $CB^2m\log^2(m+1)$, or $CB^2m^{3-2\gamma}$ in the three regimes $\gamma>1$, $\gamma=1$ and $1/2<\gamma<1$. For $j\le-m$, put $v=-j\ge m$ and bound the inner sum by $2^\gamma Bm(v+1)^{-\gamma}$; this contributes at most $4^\gamma B^2m^2\sum_{v\ge m}(v+1)^{-2\gamma}\le C_\gamma B^2m^{3-2\gamma}$, which is at most a constant times $m$ for $\gamma>1$ and is absorbed by $m\log^2(m+1)$ for $\gamma=1$. Dividing by $m^2$ and taking the square root gives the claim.
\end{proof}

\begin{lemma}\label{lem:sigmarate}
Under Assumption~\ref{ass:fdm}, $\sigma_m\le C_\sigma\rho_m$ with $\rho_m$ as in \eqref{eq:sigmarate} and $C_\sigma=C_\gamma A$, where $C_\gamma$ is the constant of Proposition~\ref{prop:convolution}.
\end{lemma}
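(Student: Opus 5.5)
The lemma follows by chaining the coupling identity \eqref{eq:couplingidentity} with Proposition~\ref{prop:convolution}, so the plan is short.

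First, I translate Assumption~\ref{ass:fdm} into a bound on the projection norms. By \eqref{eq:couplingidentity}, $\pi_j(s)=2^{-1/2}\norm{W_j(s)-W_j^*(s)}_2$ for every $s$ and every $j\ge0$. Taking the supremum over $s$ and using the definition \eqref{eq:omega} of $\omega(j)$ gives $\sup_s\pi_j(s)=2^{-1/2}\omega(j)\le 2^{-1/2}A(1\vee j)^{-\gamma}$. This is exactly the hypothesis of Proposition~\ref{prop:convolution} with $B=2^{-1/2}A$, or more crudely $B=A$.

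Second, I apply Proposition~\ref{prop:convolution}. It yields $\sigma_m\le C_\gamma B\rho_m\le C_\gamma A\rho_m$, with $\rho_m$ as in \eqref{eq:sigmarate} in each of the three regimes of $\gamma$. This is the claim with $C_\sigma=C_\gamma A$.

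Two points need checking. The first is that the centering of the score is harmless for CMR: the events $\{\SM_j-q_\alpha\le s\}$ range over the same family as $\{\SM_j\le s\}$ as $s$ varies over $\R$, so $\omega$ is unchanged, as already noted in the text. The second is that \eqref{eq:couplingidentity} holds for every real $s$, not only for $\abs{s}\le r$. This matters because Proposition~\ref{prop:convolution} takes a supremum over all $s$, while $\sigma_m$ restricts to $\abs{s}\le r$, which only weakens the requirement.

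The only potentially delicate step is the identity $\pi_j=2^{-1/2}\norm{W_j-W_j^*}_2$. It rests on $W_j$ and $W_j^*$ being conditionally i.i.d.\ given $\Fcal_{-1}$, so that $\calP_0J_j=W_j-\E(W_j\mid\Fcal_{-1})$ has half the squared norm of $W_j-W_j^*$. This identity was established just before Proposition~\ref{prop:envelope}, so I expect no real obstacle.
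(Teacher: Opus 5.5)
Your proposal is correct and follows the paper's proof: the coupling identity \eqref{eq:couplingidentity} converts Assumption~\ref{ass:fdm} into $\sup_s\pi_j(s)\le2^{-1/2}A(1\vee j)^{-\gamma}\le A(1\vee j)^{-\gamma}$ for all $j\ge0$. Proposition~\ref{prop:convolution} with $B=A$ then gives $\sigma_m\le C_\gamma A\rho_m$.
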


\begin{proof}
By \eqref{eq:couplingidentity}, Assumption~\ref{ass:fdm} gives $\sup_s\pi_j(s)\le2^{-1/2}A(1\vee j)^{-\gamma}\le A(1\vee j)^{-\gamma}$ for all $j\ge0$, and Proposition~\ref{prop:convolution} applies with $B=A$.
\end{proof}

\subsection{The Empirical Quantile of the Oracle Scores}\label{app:twothreshold}

This subsection turns the variance bound of Appendix~\ref{app:projection} into bounds on the tail and the mean of the oracle calibration error $\Qor$ (Proposition~\ref{prop:quantileform}), and fixes the thresholds $m_0$ and $m_1$ and the constants $C_1$ and $C_2$ of Theorems~\ref{thm:coverage}--\ref{thm:cmr}.

We write $d_m=k/m-(1-\alpha)$ for the offset of the rank $k=\lceil(m+1)(1-\alpha)\rceil$ from its target, so that $0\le d_m<2/m$, and we use the thresholds
\begin{equation}\label{eq:m0}
m_0=\max\Big\{\frac{4}{f_\alpha r},\ \frac{2(1-\alpha)}{\alpha}\Big\},
\end{equation}
for which $m\ge m_0$ implies $d_m/f_\alpha\le r/2$ and $\min(k,m-k+1)\ge m\min(1-\alpha,\alpha/2)$, and
\begin{equation}\label{eq:m1}
m_1=\min\Big\{m'\ge m_0:\ \sup_{m\ge m'}t_m\le\frac r2\Big\},\qquad
t_m=\max\Big\{\frac{4}{mf_\alpha},\ \Big(\frac{16C_\sigma^2\rho_m^2}{L_\alpha f_\alpha^2}\Big)^{1/3}\Big\},
\end{equation}
which is finite because $\rho_m\to0$ under Assumption~\ref{ass:fdm}. Indeed $k\ge(m+1)(1-\alpha)\ge m(1-\alpha)$ and $m-k+1\ge\alpha m-(1-\alpha)\ge\alpha m/2$ when $m\ge2(1-\alpha)/\alpha$. The constants of Theorems~\ref{thm:coverage}--\ref{thm:cmr} are
\begin{equation}\label{eq:C1}
C_1=\frac32\Big(\frac{4L_\alpha C_\sigma}{f_\alpha}\Big)^{2/3}+\frac{4L_\alpha}{f_\alpha},
\end{equation}
\begin{equation}\label{eq:C2}
C_2=\frac{4}{f_\alpha}+\frac{4\sqrt2\,C_\sigma}{f_\alpha}\Big(1+\frac{M}{r}\Big),\qquad M=\Big\{\frac{\E(\Sor_0)^2}{\min(1-\alpha,\alpha/2)}\Big\}^{1/2},
\end{equation}
where $\E(\Sor_0)^2$ is $\E(\SQ_0)^2$ for CQR and $\E(\SM_0-q_\alpha)^2$ for CMR.

Write $v_m(s)=\Var(\sum_{i=1}^m\ind{\Sor_{n+i}\le s})=m^2\Var\{\Fhat(s)\}$, so that $\sigma_m^2=\sup_{\abs{s}\le r}v_m(s)/m^2$, and put $F_+(t)=F(t)-(1-\alpha)$ and $F_-(t)=(1-\alpha)-F(-t)$. If $F_+(t)>d_m$ and $F_-(t)+d_m>0$, then
\begin{equation}\label{eq:exacttwothreshold}
\Prob(\abs{\Qor}>t)\le\min\bigg\{1,\ \frac{v_m(t)}{m^2\{F_+(t)-d_m\}^2}+\frac{v_m(-t)}{m^2\{F_-(t)+d_m\}^2}\bigg\}.
\end{equation}
Indeed, $\Qor>t$ implies $\Fhat(t)<k/m$, hence $F(t)-\Fhat(t)>F_+(t)-d_m$; and $\Qor<-t$ implies $\Fhat(-t)\ge k/m$, hence $\Fhat(-t)-F(-t)\ge F_-(t)+d_m$. Applying Chebyshev's inequality to each event and adding the two bounds proves \eqref{eq:exacttwothreshold}.

\begin{proposition}[Quantile form]\label{prop:quantileform}
Under Assumption~\ref{ass:local}(a), put $a_m=d_m/f_\alpha$ and $b_m=\sqrt2\sigma_m/f_\alpha$. For $a_m<t\le r$,
\begin{equation}\label{eq:twothreshold}
\Prob(\abs{\Qor}>t)\le\min\{1,\,b_m^2/(t-a_m)^2\}.
\end{equation}
If in addition $M_m\ge\norm{\Qor}_2$ is a deterministic number and $a_m<r$, then
\begin{equation}\label{eq:rankmoment}
\E\abs{\Qor}\le a_m+2b_m+M_m\min\{1,b_m/(r-a_m)\},
\end{equation}
and a valid choice is
\begin{equation}\label{eq:rankM}
M_m=\bigg\{\frac{m\,\E(\Sor_0)^2}{\min(k,m-k+1)}\bigg\}^{1/2}.
\end{equation}
\end{proposition}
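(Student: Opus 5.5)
The plan is to start from the two-threshold bound \eqref{eq:exacttwothreshold} and feed it the margin condition of Assumption~\ref{ass:local}(a) together with $v_m(\pm t)\le m^2\sigma_m^2$, which holds for $\abs{t}\le r$ by the definition of $\sigma_m$.

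\emph{The tail bound \eqref{eq:twothreshold}.} For $a_m<t\le r$, Assumption~\ref{ass:local}(a) gives $F_+(t)-d_m\ge f_\alpha t-d_m=f_\alpha(t-a_m)>0$. It also gives $F_-(t)+d_m\ge f_\alpha t\ge f_\alpha(t-a_m)>0$, because $d_m\ge0$. Both hypotheses of \eqref{eq:exacttwothreshold} therefore hold. Each of its two Chebyshev terms is at most $\sigma_m^2/\{f_\alpha^2(t-a_m)^2\}$, so their sum is at most $2\sigma_m^2/\{f_\alpha^2(t-a_m)^2\}=b_m^2/(t-a_m)^2$. The $\min$ with $1$ is trivial.

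\emph{The moment bound \eqref{eq:rankmoment}.} I will write $\E\abs{\Qor}=\E\abs{\Qor}\ind{\abs{\Qor}\le r}+\E\abs{\Qor}\ind{\abs{\Qor}>r}$ and treat the two parts separately.
\begin{itemize}
\item For the first part, I use $\int_0^r\Prob(\abs{\Qor}>t)\,dt$, bounding the integrand by $1$ on $[0,a_m+b_m]$ and by $b_m^2/(t-a_m)^2$ on $(a_m+b_m,r]$. The tail integral is at most $\int_{b_m}^\infty b_m^2u^{-2}\,du=b_m$, so this part is at most $a_m+2b_m$. (If $a_m+b_m\ge r$, the integral is simply at most $r\le a_m+2b_m$.)
\item For the second part, Cauchy--Schwarz gives $\E\abs{\Qor}\ind{\abs{\Qor}>r}\le\norm{\Qor}_2\,\Prob(\abs{\Qor}>r)^{1/2}$. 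By \eqref{eq:twothreshold} at $t=r$, $\Prob(\abs{\Qor}>r)^{1/2}\le\min\{1,b_m/(r-a_m)\}$, so this part is at most $M_m\min\{1,b_m/(r-a_m)\}$.
\end{itemize}
Adding the two parts gives \eqref{eq:rankmoment}.

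\emph{The choice \eqref{eq:rankM} of $M_m$.} This is the step that needs a genuine idea: a second-moment bound on an order statistic of dependent variables, with nothing beyond stationarity available. I will use a counting argument on $\Qor^2$.
\begin{itemize}
\item If $\Qor\ge0$, then at least $m-k+1$ of the calibration scores are $\ge\Qor$, so $\Qor^2\le\frac{1}{m-k+1}\sum_{i=1}^m(\Sor_{n+i})^2$.
\item If $\Qor<0$, then at least $k$ of the scores are $\le\Qor<0$, so $\Qor^2\le\frac1k\sum_{i=1}^m(\Sor_{n+i})^2$.
\end{itemize}
In either case $\Qor^2\le\sum_{i=1}^m(\Sor_{n+i})^2/\min(k,m-k+1)$. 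Taking expectations and using stationarity gives $\E\Qor^2\le m\E(\Sor_0)^2/\min(k,m-k+1)$, which is $M_m^2$. This argument needs no independence, which is why it carries over to the dependent calibration block.
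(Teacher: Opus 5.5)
Your proof is correct and is essentially the paper's argument. It uses the same specialization of \eqref{eq:exacttwothreshold} through Assumption~\ref{ass:local}(a) and $v_m(\pm t)\le m^2\sigma_m^2$, the same tail integral plus Cauchy--Schwarz for \eqref{eq:rankmoment}, and the same counting argument for \eqref{eq:rankM}. The only difference is cosmetic: you split $\E\abs{\Qor}$ on the event $\{\abs{\Qor}\le r\}$, implicitly using $\E\abs{\Qor}\ind{\abs{\Qor}\le r}\le\E\min(\abs{\Qor},r)=\int_0^r\Prob(\abs{\Qor}>t)\,dt$, whereas the paper writes $\E\min(\abs{\Qor},r)+\E(\abs{\Qor}-r)_+$; both give the same bound.
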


\begin{proof}
For $a_m<t\le r$, Assumption~\ref{ass:local}(a) gives $F_+(t)-d_m\ge f_\alpha(t-a_m)$ and $F_-(t)+d_m\ge f_\alpha(t-a_m)$, and each numerator in \eqref{eq:exacttwothreshold} is at most $m^2\sigma_m^2$. This proves \eqref{eq:twothreshold}. For \eqref{eq:rankmoment}, the tail-integral identity gives
\begin{align*}
\E\min(\abs{\Qor},r)=\int_0^r\Prob(\abs{\Qor}>t)\,dt
&\le a_m+\int_{a_m}^r\min\{1,b_m^2/(t-a_m)^2\}\,dt\\
&\le a_m+\int_0^\infty\min\{1,b_m^2/u^2\}\,du=a_m+2b_m,
\end{align*}
with the integral interpreted as zero when $b_m=0$, and the Cauchy--Schwarz inequality bounds $\E(\abs{\Qor}-r)_+\le\E[\abs{\Qor}\ind{\abs{\Qor}>r}]$ by $M_m\Prob(\abs{\Qor}>r)^{1/2}\le M_m\min\{1,b_m/(r-a_m)\}$. For \eqref{eq:rankM}, if $\Qor\ge0$, then at least $m-k+1$ scores have squares at least $(\Qor)^2$; if $\Qor<0$, then at least $k$ scores do; hence $\min(k,m-k+1)(\Qor)^2\le\sum_{i=1}^m(\Sor_{n+i})^2$, and taking expectations proves the claim.
\end{proof}

At fixed $\alpha$ and $m\ge m_0$, \eqref{eq:rankM} gives $M_m\le M$ with the constant $M$ of \eqref{eq:C2}, a bound that does not depend on $m$; the constants are not uniform as $\alpha\downarrow0$.

\subsection{Proofs of Theorems \ref{thm:coverage}--\ref{thm:cmr}}\label{app:jointproof}

\begin{proof}[Proof of Theorem \ref{thm:length}]\label{app:lengthproof}
For $4/(mf_\alpha)\le t\le r$ we have $d_m<2/m\le f_\alpha t/2$, so $t-a_m\ge t/2$ and \eqref{eq:twothreshold} gives $\Prob(\abs{\Qor}>t)\le4b_m^2/t^2=8\sigma_m^2/(f_\alpha t)^2\le8C_\sigma^2\rho_m^2/(f_\alpha t)^2$, by Lemma~\ref{lem:sigmarate}. On the event $\{E_n\le e,\ \abs{\Qor}\le t\}$, \eqref{eq:comparison} bounds the length error by $4e+2t$, and a union bound proves \eqref{eq:lengthtail}.

For the expectation, $m\ge m_0$ gives $a_m\le r/2$, so that $\min\{1,b_m/(r-a_m)\}\le2b_m/r$, and \eqref{eq:rankmoment} with \eqref{eq:rankM} and $M_m\le M$ gives
\begin{equation}\label{eq:rankexpectation}
\E\abs{\Qor}\le\frac{2}{mf_\alpha}+\frac{2\sqrt2\,\sigma_m}{f_\alpha}\Big(1+\frac{M}{r}\Big)\le\frac{C_2}{2}\rho_m,
\end{equation}
where the second inequality uses $\sigma_m\le C_\sigma\rho_m$ and $1/m\le\rho_m$, which holds because $\rho_m\ge m^{-1/2}$ for $m\ge2$ and $m_0\ge4$. Taking expectations in \eqref{eq:comparison} proves \eqref{eq:lengthexpectation}.
\end{proof}

\begin{proof}[Proof of Theorem \ref{thm:coverage}]\label{app:coverage}
Fix $e\ge0$ and $t$ with $4/(mf_\alpha)\le t$ and $2e+t\le r$, and put $\eta=8C_\sigma^2\rho_m^2/(f_\alpha t)^2$, so that $\Prob(\abs{\Qor}>t)\le\eta$ by the first step of the proof of Theorem~\ref{thm:length}. On the event $\{E_n\le e,\ \abs{\Qor}\le t\}$, the inequalities \eqref{eq:centeredcomparison} imply
\[
\{\Sor_{n+m+1}\le-2e-t\}\subseteq\{\Shat_{n+m+1}\le\Shat_{(k)}\}\subseteq\{\Sor_{n+m+1}\le2e+t\}.
\]
For example, if $\Sor_{n+m+1}\le-2e-t$, then $\Shat_{n+m+1}\le-e-t\le\Shat_{(k)}$, since $\Shat_{(k)}\ge\Qor-e\ge-t-e$; the other inclusion follows from $\Sor_{n+m+1}\le\Shat_{n+m+1}+e\le\Shat_{(k)}+e\le t+2e$. Removing the exceptional event at a cost of at most $\Prob(E_n>e)+\eta$ gives
\[
F(-2e-t)-\Prob(E_n>e)-\eta\le\Prob\{Y_{n+m+1}\in\ChatQ(X_{n+m+1})\}\le F(2e+t)+\Prob(E_n>e)+\eta,
\]
and the upper bounds of Assumption~\ref{ass:local}(b), valid because $2e+t\le r$, give
\[
\big|\Prob\{Y_{n+m+1}\in\ChatQ(X_{n+m+1})\}-(1-\alpha)\big|\le L_\alpha(2e+t)+\Prob(E_n>e)+\frac{8C_\sigma^2\rho_m^2}{f_\alpha^2t^2}.
\]
Now take $e=e_n\le r/4$ and $t=t_m$ as in \eqref{eq:m1}; for $m\ge m_1$ we have $t_m\le r/2$, so that $2e_n+t_m\le r$, and $t_m\ge4/(mf_\alpha)$ by construction. Writing $t^*$ for the second term in the maximum defining $t_m$, $8C_\sigma^2\rho_m^2/(f_\alpha^2t^{*2})=L_\alpha t^*/2$, and since $t_m\ge t^*$,
\[
L_\alpha t_m+\frac{8C_\sigma^2\rho_m^2}{f_\alpha^2t_m^2}\le L_\alpha t^*+\frac{4L_\alpha}{mf_\alpha}+\frac{L_\alpha t^*}{2}=\frac32\Big(\frac{4L_\alpha C_\sigma\rho_m}{f_\alpha}\Big)^{2/3}+\frac{4L_\alpha}{mf_\alpha}.
\]
With $1/m\le\rho_m^{2/3}$, valid for $m\ge2$, this proves \eqref{eq:coveragerate} with the constant $C_1$ of \eqref{eq:C1}. The exponent $2/3$ comes from the window: the coverage error is at most $L_\alpha t$ on a window of half-width $t$ around the threshold, plus the probability of order $\rho_m^2/t^2$ that the calibrated threshold leaves the window, and $t\asymp\rho_m^{2/3}$ balances the two terms; when the score at the test time has a bounded density given the past, no window is needed and \eqref{eq:coveragepred} retains the full rate.

For \eqref{eq:coveragepred}, let $D=2E_n+\abs{\Qor}$, a nonnegative random variable measurable with respect to $\Fcal_{n+m}$ and the training randomness, which is independent of the series, so that the conditional law of $\Sor_{n+m+1}$ given both has the same density bound $L_{\mathrm{pred}}$. The same pathwise inclusions hold with $D$ in place of $2e+t$ on every realization. Denote $A=\{\Shat_{n+m+1}\le\Shat_{(k)}\}$ and $B=\{\Sor_{n+m+1}\le0\}$. Then $A\setminus B\subseteq\{0<\Sor_{n+m+1}\le D\}$ and $B\setminus A\subseteq\{-D<\Sor_{n+m+1}\le0\}$, and the conditional density bound gives probability at most $L_{\mathrm{pred}}\E D$ for each. Since $\abs{\Prob(A)-\Prob(B)}=\abs{\Prob(A\setminus B)-\Prob(B\setminus A)}$ is at most the larger of the two nonnegative terms, it is at most $L_{\mathrm{pred}}\E D$, and $\Prob(B)=F(0)=1-\alpha$. The bound \eqref{eq:rankexpectation} completes the proof.
\end{proof}

The predictive-density condition of \eqref{eq:coveragepred} holds, for example, when $X_{n+m+1}$ is $\Fcal_{n+m}$-measurable and $Y_{n+m+1}=a+\sigma\epsilon$, where $a$ and $\sigma\ge\sigma_0>0$ are $\Fcal_{n+m}$-measurable and $\epsilon$ is independent of the past with density at most $M_\epsilon$: conditional on the past, both oracle CQR endpoints are fixed and the score density is at most $2M_\epsilon/\sigma_0$, so one can take $L_{\mathrm{pred}}=2M_\epsilon/\sigma_0$. The condition concerns the predictive law and is stronger than a density bound for the stationary marginal law; a process with a positive probability of repeating its preceding value does not satisfy it, and for such a process only \eqref{eq:coveragerate} applies.

\begin{proof}[Proof of Theorem \ref{thm:cmr}]
With the centered scores of Appendix~\ref{app:projection}, \eqref{eq:centeredcomparison} gives $\abs{\Shat_{(k)}-\Qor}\le E_n$ and $\abs{\ShatM_{(k)}-q_\alpha}=\abs{\Shat_{(k)}}\le E_n+\abs{\Qor}$, and Assumptions~\ref{ass:local} and \ref{ass:fdm} for CMR are the assumptions used above for the centered score $\Sor_t=\SM_t-q_\alpha$. The proofs of Theorems~\ref{thm:coverage} and \ref{thm:length} then apply verbatim: the coverage argument with the event $\{Y_{n+m+1}\in\ChatM(X_{n+m+1})\}=\{\Shat_{n+m+1}\le\Shat_{(k)}\}$ and $D=2E_n+\abs{\Qor}$ as before, which proves \eqref{eq:cmrcoverage} and \eqref{eq:cmrcoveragepred}, and the length argument with $2E_n+2\abs{\Qor}$ in place of $4E_n+2\abs{\Qor}$, which proves \eqref{eq:cmrlengthtail} and \eqref{eq:cmrlength} with \eqref{eq:rankexpectation}.
\end{proof}

The proofs use the test observation only through its oracle score: through the marginal distribution function $F$, which by stationarity is the same at every time, and, for \eqref{eq:coveragepred} and \eqref{eq:cmrcoveragepred}, through the density bound of its conditional law given $\Fcal_{n+m}$. For a fixed horizon $h\ge1$, the conditional law of $\Sor_{n+m+h}$ given $\Fcal_{n+m}$ has the density $\E\{p_{n+m+h}(\cdot)\mid\Fcal_{n+m}\}$, where $p_{n+m+h}$ is the conditional density of $\Sor_{n+m+h}$ given $\Fcal_{n+m+h-1}$ and is at most $L_{\mathrm{pred}}$ by stationarity. The coverage bounds of Theorems~\ref{thm:coverage} and~\ref{thm:cmr} therefore hold, with the same constants, for $Y_{n+m+h}$ and the interval evaluated at $X_{n+m+h}$.

The convergence statements after Theorem~\ref{thm:length} are the following: under Assumptions~\ref{ass:train}--\ref{ass:fdm}, as $n\to\infty$ and $m\to\infty$ in any manner,
\begin{equation}\label{eq:joint}
\Prob\{Y_{n+m+1}\in\ChatQ(X_{n+m+1})\}\to1-\alpha\quad\text{and}\quad\sup_x\abs{\len\ChatQ(x)-\len\CQ(x)}\overset{p}{\to}0,
\end{equation}
and for CMR the coverage tends to $1-\alpha$ and $\sup_x\abs{\len\ChatM(x)-\len\CM(x)}=2\abs{\ShatM_{(k)}-q_\alpha}\to0$ in probability. For \eqref{eq:joint}, since $\gamma>1/2$, $\rho_m\to0$, and $e_n\to0$, so that $e_n\le r/4$ and $m\ge m_1$ for all large $n$ and $m$, and the right-hand side of \eqref{eq:coveragerate} tends to zero; for the length, \eqref{eq:lengthtail} with $e=e_n$ and any fixed $t\in(0,r]$ has a right-hand side that tends to zero and a threshold $4e_n+2t\to2t$, and $t$ is arbitrary. The same argument applied to \eqref{eq:cmrcoverage} and \eqref{eq:cmrlengthtail} gives the statements for CMR. Neither statement relates the growth of $n$ to that of $m$.

\section{The Assumptions: Verification and Examples}\label{app:assumptions}

This appendix verifies Assumption~\ref{ass:train} for linear quantile regression and extends Theorems~\ref{thm:coverage}--\ref{thm:cmr} to learners whose fitted endpoints converge to a limit other than the oracle endpoints, verifies Assumption~\ref{ass:local} from a density condition on the response and Assumption~\ref{ass:fdm} from the functional dependence measure of the scores, then verifies the four examples of Section~\ref{sec:main} and compares the resulting guarantees with those formulated through mixing.

\subsection{Assumption \ref{ass:train} and the Learner}\label{app:training}

Assumption~\ref{ass:train} concerns the learner only. We verify it for the parametric quantile-regression estimator computed from all $n$ training observations, without any rate of decay of the dependence: the only property of the series that is used is ergodicity, which a stationary causal functional $Z_t=G(\ldots,\varepsilon_{t-1},\varepsilon_t)$ of \iid\ innovations always has, because the shift on the innovation sequence is a Bernoulli shift, hence ergodic, and the process is a factor of it \citep{cornfeld1982ergodic}.

Let $\varrho_\tau(v)=v(\tau-\ind{v<0})$ be the pinball loss at level $\tau\in(0,1)$, let $R_\tau(\theta)=\E\varrho_\tau(Y_t-X_t^\tr\theta)$ be the population risk, and let $\widehat R_{n,\tau}(\theta)=n^{-1}\sum_{t=1}^n\varrho_\tau(Y_t-X_t^\tr\theta)$ be the training risk.

\begin{lemma}[Consistency of quantile regression on an ergodic series]\label{lem:ergodicqr}
Suppose that $\norm{X_t}\le B_x$ almost surely, $\E\abs{Y_t}<\infty$, and that for $\tau\in\{\alpha/2,1-\alpha/2\}$ the conditional $\tau$-quantile of $Y_t$ given $X_t=x$ is $x^\tr\theta^*_\tau$ on the covariate domain, where $\theta^*_\tau$ is the unique minimizer of $R_\tau$ over a compact set $\Theta\subset\R^p$. Let $\widehat\theta_\tau\in\Theta$ satisfy $\widehat R_{n,\tau}(\widehat\theta_\tau)\le\inf_{\theta\in\Theta}\widehat R_{n,\tau}(\theta)+o_n$ with $o_n\to0$ in probability. Then $\widehat\theta_\tau\to\theta^*_\tau$ in probability, and the fitted endpoints $\lhat(x)=x^\tr\widehat\theta_{\alpha/2}$ and $\uhat(x)=x^\tr\widehat\theta_{1-\alpha/2}$ satisfy $\EQ\le B_x\max_\tau\norm{\widehat\theta_\tau-\theta^*_\tau}\to0$ in probability, so that Assumption~\ref{ass:train} holds.
\end{lemma}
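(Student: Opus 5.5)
The plan is the standard argmin-consistency argument for M-estimators: uniform convergence of the empirical risk over the compact $\Theta$, plus identifiability of $\theta^*_\tau$ as the unique minimizer of a continuous population risk.

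\emph{Pointwise convergence.} Fix $\tau$ and $\theta\in\Theta$. The summand $\varrho_\tau(Y_t-X_t^\tr\theta)$ is a measurable function of $Z_t$. It is integrable, because $\varrho_\tau(v)\le\abs{v}$ gives the envelope $\abs{Y_t}+B_x\norm{\theta}$. The process $(Z_t)$ is a factor of a Bernoulli shift and hence ergodic. Birkhoff's ergodic theorem then gives $\widehat R_{n,\tau}(\theta)\to R_\tau(\theta)$ almost surely.

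\emph{Uniform convergence.} To upgrade to uniformity over $\Theta$, use that $\varrho_\tau$ is $1$-Lipschitz. This gives, for all $\theta,\theta'$,
\[
\abs{\widehat R_{n,\tau}(\theta)-\widehat R_{n,\tau}(\theta')}\le B_x\norm{\theta-\theta'},\qquad \abs{R_\tau(\theta)-R_\tau(\theta')}\le B_x\norm{\theta-\theta'}.
\]
Both risks are therefore equicontinuous in $\theta$, with a deterministic Lipschitz constant that does not depend on $n$. Cover $\Theta$ by a finite $\epsilon$-net and apply the ergodic theorem at each point of the net. This yields $\sup_{\theta\in\Theta}\abs{\widehat R_{n,\tau}(\theta)-R_\tau(\theta)}\le2B_x\epsilon+o(1)$ almost surely, and $\epsilon$ is arbitrary, so the supremum tends to zero almost surely. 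This step uses no rate of dependence, only ergodicity.

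\emph{Argmin consistency.} Let $\eta>0$. The set $\Theta\setminus\{\norm{\theta-\theta^*_\tau}<\eta\}$ is compact and $R_\tau$ is continuous on it. Since $\theta^*_\tau$ is the unique minimizer, $\kappa=\min_{\norm{\theta-\theta^*_\tau}\ge\eta}R_\tau(\theta)-R_\tau(\theta^*_\tau)>0$. On the event $\{\norm{\widehat\theta_\tau-\theta^*_\tau}\ge\eta\}$ we get
\[
\kappa\le R_\tau(\widehat\theta_\tau)-R_\tau(\theta^*_\tau)\le2\sup_\Theta\abs{\widehat R_{n,\tau}-R_\tau}+o_n .
\]
The right-hand side tends to zero in probability, so the probability of this event tends to zero. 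Hence $\widehat\theta_\tau\to\theta^*_\tau$ in probability.

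\emph{Endpoint error.} Linearity of the quantile model on the covariate domain identifies $\ell_\alpha(x)=x^\tr\theta^*_{\alpha/2}$ and $u_\alpha(x)=x^\tr\theta^*_{1-\alpha/2}$. Cauchy--Schwarz with $\norm{x}\le B_x$ then bounds $\EQ$ by $B_x\max_\tau\norm{\widehat\theta_\tau-\theta^*_\tau}$, which tends to zero in probability.

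Finally, convergence in probability of $\EQ$ to zero is equivalent to Assumption~\ref{ass:train}. To see this, let $g(e)=\limsup_n\Prob(\EQ>e)$; it is zero for every $e>0$. Choose $e_n\downarrow0$ slowly enough that $\xi_n=\sup_{n'\ge n}\Prob(E_{n'}>e_{n'})\to0$, by a standard diagonal choice.

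The main obstacle is the uniform-convergence step, since it must use only ergodicity. The Lipschitz bracketing handles it because the Lipschitz constant $B_x$ is deterministic, so no dependence-rate or empirical-process maximal inequality is needed. The rest is routine.
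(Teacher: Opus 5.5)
Your proposal is correct and follows the paper's proof essentially step for step. The shared steps are: pointwise convergence by the ergodic theorem for the Bernoulli shift; the deterministic $B_x$-Lipschitz bound with a finite net, giving uniform almost sure convergence over $\Theta$; the separation argument from uniqueness of $\theta^*_\tau$ on the compact $\Theta$; the bound $\EQ\le B_x\max_\tau\norm{\widehat\theta_\tau-\theta^*_\tau}$; and a slowly decreasing $e_n$ to produce Assumption~\ref{ass:train}. The differences are only cosmetic: a different net radius, and applying the separation gap $\kappa$ directly instead of first showing $R_\tau(\widehat\theta_\tau)-R_\tau(\theta^*_\tau)\to0$ in probability.
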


\begin{proof}
Fix $\tau$. Since $\abs{\varrho_\tau(v)-\varrho_\tau(v')}\le\abs{v-v'}$, the map $\theta\mapsto\varrho_\tau(Y_t-X_t^\tr\theta)$ is $B_x$-Lipschitz on $\Theta$ for every realization, so $R_\tau$ and $\widehat R_{n,\tau}$ are $B_x$-Lipschitz, and $R_\tau$ is finite because $\E\abs{Y_t}<\infty$ and $\Theta$ is bounded. For each fixed $\theta$, the ergodic theorem gives $\widehat R_{n,\tau}(\theta)\to R_\tau(\theta)$ almost surely. Given $\epsilon>0$, let $\theta_1,\ldots,\theta_J$ be a finite $\epsilon/(3B_x)$-net of $\Theta$; on the almost sure event on which the convergence holds at every $\theta_j$, the Lipschitz property gives $\sup_{\theta\in\Theta}\abs{\widehat R_{n,\tau}(\theta)-R_\tau(\theta)}\le\epsilon$ for all large $n$. Hence $\sup_\Theta\abs{\widehat R_{n,\tau}-R_\tau}\to0$ almost surely.

Now $R_\tau(\widehat\theta_\tau)\le\widehat R_{n,\tau}(\widehat\theta_\tau)+\sup_\Theta\abs{\widehat R_{n,\tau}-R_\tau}\le\widehat R_{n,\tau}(\theta^*_\tau)+o_n+\sup_\Theta\abs{\widehat R_{n,\tau}-R_\tau}\le R_\tau(\theta^*_\tau)+o_n+2\sup_\Theta\abs{\widehat R_{n,\tau}-R_\tau}$, so $R_\tau(\widehat\theta_\tau)-R_\tau(\theta^*_\tau)\to0$ in probability. Since $R_\tau$ is continuous on the compact set $\Theta$ and $\theta^*_\tau$ is its unique minimizer, for every $\eta>0$ the number $\inf\{R_\tau(\theta)-R_\tau(\theta^*_\tau):\theta\in\Theta,\ \norm{\theta-\theta^*_\tau}\ge\eta\}$ is positive, and therefore $\Prob(\norm{\widehat\theta_\tau-\theta^*_\tau}\ge\eta)\to0$. Finally, on the covariate domain, $\abs{x^\tr\widehat\theta_\tau-x^\tr\theta^*_\tau}\le B_x\norm{\widehat\theta_\tau-\theta^*_\tau}$, which gives the bound on $\EQ$; the sequences $e_n$ and $\xi_n$ of Assumption~\ref{ass:train} can be taken as $e_n=\eta_n$ and $\xi_n=\Prob(B_x\max_\tau\norm{\widehat\theta_\tau-\theta^*_\tau}>\eta_n)$ for any $\eta_n\downarrow0$ slowly enough that $\xi_n\to0$.
\end{proof}

Uniqueness of the minimizer holds, for example, when the conditional density of $Y_t$ given $X_t$ is positive at $x^\tr\theta^*_\tau$ and $\E X_tX_t^\tr$ is positive definite.

\subsection{Misspecified Learners}\label{app:misspecified}

This subsection extends Theorems~\ref{thm:coverage}--\ref{thm:cmr} to a learner whose fitted endpoints converge to a limit other than the oracle endpoints, as those of linear quantile regression do when the conditional quantiles are not linear in the covariates. The calibrated interval then keeps its coverage and approaches the limit interval adjusted to marginal coverage $1-\alpha$, whereas the fitted interval itself misses the nominal level unless its limit already has marginal coverage $1-\alpha$.

Let $\ell_\infty$ and $u_\infty$ be measurable functions on the covariate domain, let $s_\infty(x,y)=\max\{\ell_\infty(x)-y,\ y-u_\infty(x)\}$, let $q_\infty$ be the $(1-\alpha)$-quantile of $s_\infty(X_t,Y_t)$ under the stationary law, and put
\[
C_\infty(x)=[\ell_\infty(x)-q_\infty,\ u_\infty(x)+q_\infty],\qquad E^\infty_n=\max\{\norm{\lhat-\ell_\infty}_\infty,\norm{\uhat-u_\infty}_\infty\}.
\]
Since $Y_t$ lies in $[\ell_\infty(X_t)-c,\ u_\infty(X_t)+c]$ exactly when $s_\infty(X_t,Y_t)\le c$, the interval $C_\infty(x)$ is the shortest interval of this form with marginal coverage at least $1-\alpha$; for the oracle endpoints, $q_\infty=0$ and $C_\infty=\CQ$.

\begin{proposition}[Misspecified learners]\label{prop:misspecified}
Suppose that $u_\infty(x)-\ell_\infty(x)+2q_\infty\ge0$ for every $x$ in the covariate domain, that Assumption~\ref{ass:train} holds with $E^\infty_n$ in place of $E_n$, and that Assumptions~\ref{ass:local} and~\ref{ass:fdm} hold for the centered limit score $\Sor_t=s_\infty(X_t,Y_t)-q_\infty$. Then Theorems~\ref{thm:coverage} and~\ref{thm:length} hold with $\Sor_t$, $E^\infty_n$ and $C_\infty$ in place of $\SQ_t$, $\EQ$ and $\CQ$. Moreover, if $q_\infty\ne0$, then for all $n$ with $e_n\le\min(\abs{q_\infty},r)/2$ the coverage of the fitted interval satisfies
\[
\big|\Prob\{\lhat(X_{n+m+1})\le Y_{n+m+1}\le\uhat(X_{n+m+1})\}-(1-\alpha)\big|\ge\frac{f_\alpha\min(\abs{q_\infty},r)}{2}-\xi_n.
\]
\end{proposition}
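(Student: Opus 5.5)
The plan is to observe that the proofs of Theorems~\ref{thm:coverage} and~\ref{thm:length} in Appendix~\ref{app:jointproof} never use that the oracle endpoints are the conditional quantiles. They use only three facts: a pathwise comparison of the form \eqref{eq:centeredcomparison} between fitted and reference scores, the local regularity of the distribution function $F$ of the centered reference score, and the variance bound of Lemma~\ref{lem:sigmarate}. The natural reference score here is $\Sor_t=s_\infty(X_t,Y_t)-q_\infty$.

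\textbf{Step 1: the comparison.} The elementary inequality $\abs{\max(a,b)-\max(c,d)}\le\max(\abs{a-c},\abs{b-d})$ gives $\abs{\ShatQ_t-s_\infty(X_t,Y_t)}\le E^\infty_n$ at every calibration time and at the test time. Subtracting $q_\infty$ from both, we set $\Shat_t=\ShatQ_t-q_\infty$. The order-statistic argument of Appendix~\ref{app:pathwise} then yields \eqref{eq:centeredcomparison} with $E^\infty_n$ in place of $E_n$.

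Coverage is the event $\{\ShatQ_{n+m+1}\le\ShatQ_{(k)}\}=\{\Shat_{n+m+1}\le\Shat_{(k)}\}$, because the shift by $q_\infty$ cancels. Hence the window argument and the predictive-density argument of the proof of Theorem~\ref{thm:coverage} go through verbatim, with $F(0)=1-\alpha$ by the definition of $q_\infty$. The needed continuity of $F$ near $0$ is part of Assumption~\ref{ass:local}.

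\textbf{Step 2: the length.} We have $\len C_\infty(x)=u_\infty-\ell_\infty+2q_\infty\ge0$ by hypothesis, so no positive part is needed on the oracle side. The fitted length is $[\widehat w(x)+2\ShatQ_{(k)}]_+$. Since the positive-part map is $1$-Lipschitz,
\[
\abs{\len\ChatQ(x)-\len C_\infty(x)}\le\abs{\widehat w(x)-w_\infty(x)}+2\abs{\ShatQ_{(k)}-q_\infty}\le4E^\infty_n+2\abs{\Qor}.
\]
Here $\Qor$ is the $k$-th smallest centered limit score. Lemma~\ref{lem:sigmarate} and Proposition~\ref{prop:quantileform} apply unchanged to $\Qor$, because they depend only on Assumptions~\ref{ass:local}(a) and~\ref{ass:fdm} for this score. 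This gives the analogues of \eqref{eq:lengthtail} and \eqref{eq:lengthexpectation}.

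\textbf{Step 3: the lower bound for the fitted interval.} This is the step I expect to need the most care. The fitted interval covers exactly when $\ShatQ_{n+m+1}\le0$. On $\{E^\infty_n\le e_n\}$ we have $\abs{\ShatQ_{n+m+1}-s_\infty}\le e_n$, so the score compared with zero is $\Sor_{n+m+1}+q_\infty$ up to $e_n$. The argument splits on the sign of $q_\infty$.

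\emph{Case $q_\infty>0$.} Coverage implies $\Sor_{n+m+1}\le -q_\infty+e_n$. Hence the coverage is at most $F(-q_\infty+e_n)+\xi_n$. Put $\tau=\min(q_\infty,r)$. Monotonicity of $F$ and $e_n\le\tau/2$ give $F(-q_\infty+e_n)\le F(-\tau/2)$. Assumption~\ref{ass:local}(a) then gives $F(-\tau/2)\le1-\alpha-f_\alpha\tau/2$.

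\emph{Case $q_\infty<0$.} Symmetrically, $\Sor_{n+m+1}\le\abs{q_\infty}-e_n$ implies coverage on the good event. Hence the coverage is at least $F(\tau/2)-\xi_n\ge1-\alpha+f_\alpha\tau/2-\xi_n$, with $\tau=\min(\abs{q_\infty},r)$.

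In both cases this gives the stated bound. The points to check are that monotonicity lets us reduce to the range $[-r,r]$ where Assumption~\ref{ass:local}(a) holds, and that the training randomness enters only through $\Prob(E^\infty_n>e_n)\le\xi_n$.
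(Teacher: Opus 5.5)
Your proposal is correct and follows the paper's proof essentially step for step. It uses the same shifted score $\Shat_t=\ShatQ_t-q_\infty$ to obtain \eqref{eq:centeredcomparison} with $E^\infty_n$ and a shift-invariant coverage event, the same $1$-Lipschitz positive-part bound $4E^\infty_n+2\abs{\Qor}$ relying on $\len C_\infty(x)\ge0$, and the same split on the sign of $q_\infty$ with half-width $\min(\abs{q_\infty},r)/2$ for the fitted interval.
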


\begin{proof}
Put $\Shat_t=\ShatQ_t-q_\infty$. The inequality for maxima in the proof of \eqref{eq:comparison} gives $\abs{\Shat_t-\Sor_t}\le E^\infty_n$ for $n<t\le n+m+1$, so that \eqref{eq:centeredcomparison} holds with $E^\infty_n$ in place of $E_n$. The distribution function $F$ of $\Sor_t$ satisfies $F(0)=1-\alpha$, by the definition of $q_\infty$ and the continuity of $F$ at $0$ required by Assumption~\ref{ass:local}. The coverage event is $\{\Shat_{n+m+1}\le\Shat_{(k)}\}$, as before, because subtracting $q_\infty$ from every fitted score does not change the calibrated interval. The proof of Theorem~\ref{thm:coverage} uses the oracle endpoints only through these three facts, and it applies verbatim. For the length, put $\widehat w(x)=\uhat(x)-\lhat(x)$ and $w_\infty(x)=u_\infty(x)-\ell_\infty(x)$, so that $\len\ChatQ(x)=[\widehat w(x)+2q_\infty+2\Shat_{(k)}]_+$ and $\len C_\infty(x)=w_\infty(x)+2q_\infty\ge0$. Since the positive-part map is $1$-Lipschitz,
\[
\abs{\len\ChatQ(x)-\len C_\infty(x)}\le\abs{\widehat w(x)-w_\infty(x)}+2\abs{\Shat_{(k)}}\le2E^\infty_n+2\big(E^\infty_n+\abs{\Qor}\big),
\]
which is the length bound of \eqref{eq:comparison} with $E^\infty_n$ and $C_\infty$ in the roles of $\EQ$ and $\CQ$, and the proof of Theorem~\ref{thm:length} applies verbatim.

For the fitted interval, $Y_{n+m+1}$ lies between $\lhat(X_{n+m+1})$ and $\uhat(X_{n+m+1})$ exactly when $\ShatQ_{n+m+1}\le0$, that is, when $\Shat_{n+m+1}\le-q_\infty$. Put $\delta=\min(\abs{q_\infty},r)/2$, so that $e_n\le\delta$. If $q_\infty>0$, this event and $\{E^\infty_n\le e_n\}$ imply $\Sor_{n+m+1}\le-q_\infty+e_n\le-\delta$, so that the coverage is at most $F(-\delta)+\xi_n\le1-\alpha-f_\alpha\delta+\xi_n$ by Assumption~\ref{ass:local}(a). If $q_\infty<0$, the events $\{\Sor_{n+m+1}\le\abs{q_\infty}-e_n\}$ and $\{E^\infty_n\le e_n\}$ together imply it, and $\abs{q_\infty}-e_n\ge\delta$, so that the coverage is at least $F(\delta)-\xi_n\ge1-\alpha+f_\alpha\delta-\xi_n$.
\end{proof}

The same argument applies to CMR: for a limit center $\mu_\infty$, the $(1-\alpha)$-quantile $q_\infty$ of $\abs{Y_t-\mu_\infty(X_t)}$ and $E^\infty_n=\norm{\muhat-\mu_\infty}_\infty$, Theorem~\ref{thm:cmr} holds with $[\mu_\infty(x)-q_\infty,\ \mu_\infty(x)+q_\infty]$ in place of $\CM(x)$, provided that Assumption~\ref{ass:train} holds for $E^\infty_n$ and Assumptions~\ref{ass:local} and~\ref{ass:fdm} hold for the centered score $\abs{Y_t-\mu_\infty(X_t)}-q_\infty$.

For linear quantile regression, the proof of Lemma~\ref{lem:ergodicqr} uses correct specification only in its last step: without it, and with $\theta^*_\tau$ the unique minimizer of $R_\tau$ over $\Theta$ as before, $\widehat\theta_\tau\to\theta^*_\tau$ in probability, and $E^\infty_n\le B_x\max_\tau\norm{\widehat\theta_\tau-\theta^*_\tau}\to0$ in probability for the limit pair $\ell_\infty(x)=x^\tr\theta^*_{\alpha/2}$ and $u_\infty(x)=x^\tr\theta^*_{1-\alpha/2}$. Assumption~\ref{ass:train} therefore holds for $E^\infty_n$ whether or not the conditional quantiles are linear in the covariates, and Assumptions~\ref{ass:local} and~\ref{ass:fdm} for the limit score are verified as in Appendices~\ref{app:margin} and~\ref{app:smoothingproof}, with the endpoints of $C_\infty(x)$ in place of the oracle endpoints. The length of the calibrated interval thus approaches that of $C_\infty$. When the covariates include a constant and $\theta^*_\tau$ is an interior point of $\Theta$, the first-order condition for the intercept gives $\Prob(Y_t\le X_t^\tr\theta^*_\tau)=\tau$ for continuous conditional laws, so that $q_\infty=0$ if $\ell_\infty\le u_\infty$: the fitted interval then attains the nominal level in the limit, and calibration adds the non-asymptotic coverage bound of Theorem~\ref{thm:coverage}. When $q_\infty\ne0$, as can happen without a constant, calibration restores the nominal level that the fitted interval misses.

\subsection{Assumption \ref{ass:local} from the Response Density}\label{app:margin}

This subsection derives Assumption~\ref{ass:local} for CQR from bounds on the conditional density of the response near the oracle endpoints. Assume $u_\alpha(x)-\ell_\alpha(x)\ge2r$. For $\abs{s}\le r$,
\[
F^Q(s)=\E\big[F_{Y\mid X}(u_\alpha(X)+s\mid X)-F_{Y\mid X}(\ell_\alpha(X)-s\mid X)\big].
\]
If the conditional density lies between $f_0$ and $L_0$ on the two radius-$r$ endpoint neighborhoods, then for $0\le t\le r$,
\[
F^Q(t)-F^Q(0)=\E\bigg[\int_{u_\alpha(X)}^{u_\alpha(X)+t}f_{Y\mid X}(v\mid X)\,dv+\int_{\ell_\alpha(X)-t}^{\ell_\alpha(X)}f_{Y\mid X}(v\mid X)\,dv\bigg],
\]
and each integral lies between $f_0t$ and $L_0t$. The same reasoning applies to $F^Q(0)-F^Q(-t)$. Hence Assumption~\ref{ass:local} holds with $f_\alpha=2f_0$ and $L_\alpha=2L_0$. If the conditional density is at most $L_0$ everywhere, then $F^Q(s)$ is, for every $s$, the expectation of the positive part of $F_{Y\mid X}(u_\alpha(X)+s\mid X)-F_{Y\mid X}(\ell_\alpha(X)-s\mid X)$, a nondecreasing function of $s$ with derivative at most $2L_0$, so that the marginal density of $\SQ_t$ is at most $L=2L_0$, the density bound of Proposition~\ref{prop:sufficient}(a).

\subsection{Assumption \ref{ass:fdm} from the Functional Dependence Measure}\label{app:smoothingproof}

This subsection gives two sufficient conditions for Assumption~\ref{ass:fdm} in terms of the functional dependence measure of the scores (Proposition~\ref{prop:sufficient}) and bounds this measure through the dependence of the observations. In the notation of Appendix~\ref{app:projection}, $\omega(j)=\sup_s\norm{W_j(s)-W_j^*(s)}_2$ with $W_j(s)-W_j^*(s)=\E\{J_j(s)-J_j^*(s)\mid\Fcal_0,\varepsilon_0'\}$, and $\delta_q(j)=\norm{\Sor_j-\Sor_j^*}_q$ is the functional dependence measure of the score.

\begin{proposition}[Sufficient conditions]\label{prop:sufficient}
Let $A'>0$ and $\gamma'>0$.
\begin{enumerate}
\item[(a)] If the marginal density of $\Sor_t$ is at most $L$ and $\delta_q(j)\le A'(1\vee j)^{-\gamma'}$ for all $j\ge0$ and some $q\ge1$, then $\omega(j)\le\sqrt3(LA')^{q/(2q+2)}(1\vee j)^{-\gamma'q/(2q+2)}$ for all $j\ge0$, so that Assumption~\ref{ass:fdm} holds with $\gamma=\gamma'q/(2q+2)$ whenever $\gamma'q>q+1$.
\item[(b)] Suppose $\Sor_t=H(V_t,\varepsilon_t)$, where $V_t$ is $\Fcal_{t-1}$-measurable, and that $K_s(v)=\Prob\{H(v,\varepsilon_t)\le s\}$ is $L_H$-Lipschitz in $v$ for every $s$. Then $\norm{W_j(s)-W_j^*(s)}_2\le L_H\norm{V_j-V_j^*}_2$ for every $j\ge1$ and every $s$. Consequently, if $\norm{V_j-V_j^*}_2\le A'(1\vee j)^{-\gamma'}$ for all $j\ge1$ and $\gamma'>1/2$, then Assumption~\ref{ass:fdm} holds with $\gamma=\gamma'$ and $A=\max(2^{-1/2},L_HA')$.
\end{enumerate}
\end{proposition}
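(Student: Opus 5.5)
Part (a) follows directly from the bound \eqref{eq:projectionbound} of Proposition~\ref{prop:envelope}. Since the marginal density of $\Sor_t$ is at most $L$, that bound gives, for every $s$ and $j$,
\[
\norm{W_j(s)-W_j^*(s)}_2\le\sqrt3\,L^{q/[2(q+1)]}\delta_q(j)^{q/[2(q+1)]}.
\]
We insert $\delta_q(j)\le A'(1\vee j)^{-\gamma'}$ and take the supremum over $s$. This yields $\omega(j)\le\sqrt3(LA')^{q/(2q+2)}(1\vee j)^{-\gamma'q/(2q+2)}$, which is Assumption~\ref{ass:fdm} with $A=\sqrt3(LA')^{q/(2q+2)}$ and $\gamma=\gamma'q/(2q+2)$. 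The condition $\gamma>1/2$ is exactly $\gamma'q>q+1$. Nothing further is needed here.

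For part (b), fix $j\ge1$ and $s$. By construction $\Sor_j=H(V_j,\varepsilon_j)$ and $\Sor_j^*=H(V_j^*,\varepsilon_j)$, with the same innovation $\varepsilon_j$, since $j\ge1$ is untouched by the coupling. Because $\varepsilon_j$ is independent of $\Fcal_{j-1}$ and of $\varepsilon_0'$, conditioning on $\sigma(\Fcal_{j-1},\varepsilon_0')$ gives $\E\{J_j(s)\mid\Fcal_{j-1},\varepsilon_0'\}=K_s(V_j)$ and likewise $K_s(V_j^*)$ for the coupled indicator. By the tower property through $\sigma(\Fcal_0,\varepsilon_0')\subseteq\sigma(\Fcal_{j-1},\varepsilon_0')$,
\[
W_j(s)-W_j^*(s)=\E\{K_s(V_j)-K_s(V_j^*)\mid\Fcal_0,\varepsilon_0'\}.
\]
Conditional Jensen and the Lipschitz property then give $\norm{W_j-W_j^*}_2\le\norm{K_s(V_j)-K_s(V_j^*)}_2\le L_H\norm{V_j-V_j^*}_2$.

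Under the polynomial bound on $\norm{V_j-V_j^*}_2$, this gives $\omega(j)\le L_HA'(1\vee j)^{-\gamma'}$ for $j\ge1$. The lag $j=0$ must be handled separately. There $W_0(s)=J_0(s)$ and $W_0^*(s)=J_0^*(s)$ are indicators whose difference takes values in $\{-1,0,1\}$, so the trivial bound is $1$. The bound \eqref{eq:projectionbound} sharpens this to $\{2\min(F,1-F)\}^{1/2}\le1$. I would check that this is at most $\max(2^{-1/2},L_HA')$ up to the convention of the statement, possibly via the identity $\pi_0=2^{-1/2}\norm{W_0-W_0^*}_2$ of \eqref{eq:couplingidentity}. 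This small constant bookkeeping at $j=0$ is the only point I expect to need care; it may be that the stated $A$ is meant to control $\sup_s\pi_0(s)\le2^{-1/2}$, which is what Lemma~\ref{lem:sigmarate} actually uses. If so, I would phrase the conclusion through $\pi_j$.

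The main step is the conditioning argument in (b). Its key ingredient is that the coupling leaves $\varepsilon_j$ unchanged and independent of $\sigma(\Fcal_{j-1},\varepsilon_0')$, so the inner conditional probability is exactly $K_s$ evaluated at $V_j$ or $V_j^*$.
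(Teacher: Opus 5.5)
Your part (a) and your $j\ge1$ conditioning argument in part (b) are the paper's proof. The gap is at $j=0$, which you flag but leave open, and neither route you suggest closes it.

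\textbf{Why your routes fail.} The bound \eqref{eq:projectionbound} gives $\norm{W_0(s)-W_0^*(s)}_2\le\{2\min(F(s),1-F(s))\}^{1/2}$. This equals $1$ wherever $F(s)=1/2$, and $\omega$ takes a supremum over all $s$. So it only yields $\omega(0)\le1$, and the stated constant $A=\max(2^{-1/2},L_HA')$ fails whenever $L_HA'<1$.

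Reinterpreting the constant as a bound on $\pi_0$ is not available either. Assumption~\ref{ass:fdm} is stated for $\omega(j)=\sup_s\norm{W_j(s)-W_j^*(s)}_2$. The identity $\pi_0=2^{-1/2}\norm{W_0-W_0^*}_2$ turns a bound on $\omega(0)$ into one on $\pi_0$, not the other way round. As written, your argument proves the assumption only with the weaker constant $\max(1,L_HA')$.

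\textbf{The missing idea.} Condition on $\Fcal_{-1}$ instead of using the marginal law $F$.
\begin{itemize}
\item Since $\Sor_0$ is $\Fcal_0$-measurable, $W_0(s)=J_0(s)$ and $W_0^*(s)=J_0^*(s)$.
\item These are the same measurable function of $(\varepsilon_u)_{u\le-1}$ together with $\varepsilon_0$ or $\varepsilon_0'$, respectively.
\item Because $\varepsilon_0$ and $\varepsilon_0'$ are independent copies, independent of $\Fcal_{-1}$, the two indicators are conditionally independent Bernoulli$(p)$ given $\Fcal_{-1}$, with $p=\Prob(\Sor_0\le s\mid\Fcal_{-1})$.
\item Hence $\E\{(J_0-J_0^*)^2\mid\Fcal_{-1}\}=2p(1-p)\le1/2$, so $\norm{W_0(s)-W_0^*(s)}_2^2=2\E\{p(1-p)\}\le1/2$ for every $s$.
\end{itemize}
This gives $\omega(0)\le2^{-1/2}$. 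The union bound behind \eqref{eq:projectionbound} only controls the disagreement probability by $2\min(F,1-F)$; the conditional computation gives it exactly as $2\E\{p(1-p)\}$. Combined with your bound $\omega(j)\le L_HA'(1\vee j)^{-\gamma'}$ for $j\ge1$, this yields Assumption~\ref{ass:fdm} with $\gamma=\gamma'$ and $A=\max(2^{-1/2},L_HA')$, as stated.
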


\begin{proof}
Part (a) is \eqref{eq:projectionbound}. For part (b), fix $j\ge1$ and $s$. Conditional on $\Fcal_{j-1}$ and $\varepsilon_0'$, the common current innovation $\varepsilon_j$ is fresh, so that $\E\{J_j(s)\mid\Fcal_{j-1},\varepsilon_0'\}=K_s(V_j)$ and $\E\{J_j^*(s)\mid\Fcal_{j-1},\varepsilon_0'\}=K_s(V_j^*)$. Since $j\ge1$, the tower property gives
\[
W_j(s)-W_j^*(s)=\E\{K_s(V_j)-K_s(V_j^*)\mid\Fcal_0,\varepsilon_0'\},
\]
and conditional Jensen's inequality together with the Lipschitz assumption gives $\norm{W_j(s)-W_j^*(s)}_2\le L_H\norm{V_j-V_j^*}_2$. The requirement $j\ge1$ ensures that the innovation being integrated out is not the innovation being replaced; at $j=0$, $W_0(s)=J_0(s)$ and $W_0^*(s)=J_0^*(s)$ are independent and identically distributed given $\Fcal_{-1}$, so that $\norm{W_0(s)-W_0^*(s)}_2^2=2\E\{p(1-p)\}\le1/2$ with $p=\Prob(\Sor_0\le s\mid\Fcal_{-1})$, which is the reason for the constant $A=\max(2^{-1/2},L_HA')$.
\end{proof}

Part (a) requires only a bounded marginal density of the score, at the price of the power $q/(2q+2)$; since $\gamma=\gamma'q/(2q+2)>1/2$ forces $\gamma'>1+1/q$, it does not admit long memory, and it is the route for series whose conditional next-step law has atoms, such as the Bernoulli autoregression of Appendix~\ref{app:examples}. Part (b) applies when the conditional distribution function of the score given the past, $K_s(V_t)$, is a Lipschitz function of the part $V_t$ that the past determines; the decay exponent of $\norm{V_j-V_j^*}_2$ then transfers to $\omega(j)$ without loss. This is the structure of a location model $Y_t=m_t+a_0\varepsilon_t$ and of a scale model $Y_t=\sigma_t\varepsilon_t$ with past-determined $m_t$ or $\sigma_t$ (Appendix~\ref{app:examples}), for which $\norm{V_j-V_j^*}_2$ is bounded by the functional dependence measure of $m_t$ or $\sigma_t$ plus a Lipschitz constant times that of $X_t$; for a linear process the exponent is the memory exponent $\beta$, so part (b) admits long memory.

When the oracle endpoints are Lipschitz, the dependence measure of the scores is controlled by that of the observations. For the CQR score, the inequality for maxima used in the proof of \eqref{eq:comparison} gives
\[
\abs{\SQ_t-\SQs_t}\le\abs{Y_t-Y_t^*}+\max\{\abs{\ell_\alpha(X_t)-\ell_\alpha(X_t^*)},\abs{u_\alpha(X_t)-u_\alpha(X_t^*)}\},
\]
and Minkowski's inequality proves $\delta^Q_q(j)\le\norm{Y_j-Y_j^*}_q+K_0\norm{X_j-X_j^*}_q$ for $K_0$-Lipschitz endpoints; for CMR the corresponding constant is the Lipschitz constant of $\mu$. For a stationary recursion $Z_t=H(Z_{t-1},\varepsilon_t)$, a sufficient contraction condition is $\norm{H(z,\varepsilon_t)-H(z',\varepsilon_t)}_q\le\lambda_H\norm{z-z'}$ for deterministic $z,z'$ and some $0<\lambda_H<1$, with the norm inside the expectation interpreted in the state space. If a stationary $L^q$ solution exists, iterating the coupled recursion gives $\delta^Z_q(j)\le\lambda_H^j\delta^Z_q(0)$ for the observations; this is the geometric moment-contraction route of \citet{wu2004limit}, and the contraction has to be checked for the particular recursion.

\subsection{Verification of the Examples of Section \ref{sec:main}}\label{app:examples}

This subsection verifies the assumptions for the four examples of Section~\ref{sec:main}.

\paragraph{A linear process with discrete innovations.}
Let $B_t$ be \iid\ Bernoulli$(1/2)$ and $W_t=\sum_{j\ge0}2^{-j}B_{t-j}=W_{t-1}/2+B_t$, which is uniform on $[0,2)$ by the binary expansion of $W_t/2$. Changing $B_0$ changes $W_j$ by exactly $2^{-j}(B_0-B_0')$, so $\norm{W_j-W_j^*}_q=2^{-j}\norm{B_0-B_0'}_q\le2^{-j}$. To verify the failure of strong mixing directly, let $\alpha_{\mathrm{mix}}(h)$ be the supremum of $\abs{\Prob(A\cap D)-\Prob(A)\Prob(D)}$ over $A\in\sigma(W_s:s\le0)$ and $D\in\sigma(W_s:s\ge h)$. The recursion gives $2^hW_h=2\sum_{j=1}^{h}2^{j-1}B_j+W_0$, so $W_0$ is the remainder of $2^hW_h$ modulo $2$ and is a measurable function of $W_h$; taking $A=\{W_0\le1\}$ and $D$ the same event expressed through $W_h$ gives $\alpha_{\mathrm{mix}}(h)\ge1/4$ for every $h$, which is the argument of \citet{andrews1984nonstrong}. In the location model $Y_t=\mu+W_t$ with a constant covariate, the conditional median is $\mu+1$, the oracle CMR score $\SM_t=\abs{W_t-1}$ is uniform on $[0,1]$ with $q_\alpha=1-\alpha$, so the centered distribution function $F^M$ has slope one on $[-r,r]$, and Assumption~\ref{ass:local} holds with $f_\alpha=L_\alpha=1$ and $r=\min(\alpha,1-\alpha)$; the oracle CQR endpoints are $\mu+\alpha$ and $\mu+2-\alpha$, so that the CQR score $\SQ_t=\abs{W_t-1}-(1-\alpha)$ coincides with the centered CMR score, and what follows applies to CQR as well. The score has marginal density one, and $\delta^M_q(j)\le\norm{W_j-W_j^*}_q\le2^{-j}$ because the absolute value is $1$-Lipschitz, so Proposition~\ref{prop:sufficient}(a) gives $\omega^M(j)\le\sqrt3\,2^{-jq/(2q+2)}$, and Assumption~\ref{ass:fdm} holds with every $\gamma>1/2$, so that $\rho_m=m^{-1/2}$. The empirical median, and for CQR the empirical $\alpha/2$- and $(1-\alpha/2)$-quantiles, of the training block are consistent by ergodicity (Lemma~\ref{lem:ergodicqr} with a constant covariate), so Assumption~\ref{ass:train} holds for these learners.

\paragraph{Gaussian long memory.}
Under \eqref{eq:linearprocess}, the oracle CMR score in $Y_t=\mu+G_t$ is $\SM_t=\abs{G_t}$, with $q_\alpha=z_\alpha$; the centered score $\Sor_t=\abs{G_t}-z_\alpha$ has distribution function $F^M(s)=2\Phi(z_\alpha+s)-1$ and density $2\phi(z_\alpha+s)$, and on $\abs{s}\le r=z_\alpha/2$ the density lies between $2\phi(3z_\alpha/2)$ and $2\phi(z_\alpha/2)$, which gives Assumption~\ref{ass:local}. Write $\Sor_t=H(V_t,\varepsilon_t)$ with $V_t=\sum_{j\ge1}a_j\varepsilon_{t-j}$ and $H(v,\varepsilon)=\abs{v+a_0\varepsilon}-z_\alpha$. Then $K_s(v)=\Phi\{(z_\alpha+s-v)/\abs{a_0}\}-\Phi\{(-z_\alpha-s-v)/\abs{a_0}\}$ is Lipschitz in $v$ with constant $2\phi(0)/\abs{a_0}$, and $\norm{V_j-V_j^*}_2=\abs{a_j}\,\norm{\varepsilon_0-\varepsilon_0'}_2=\sqrt2\abs{a_j}\le\sqrt2A(1+j)^{-\beta}$, where $A$ is the coefficient bound of \eqref{eq:linearprocess}, so Proposition~\ref{prop:sufficient}(b) gives Assumption~\ref{ass:fdm} with $\gamma=\beta>1/2$ and the constant $\max\{2^{-1/2},2\sqrt2\phi(0)A/\abs{a_0}\}$ in the role of the constant of Assumption~\ref{ass:fdm}. The same holds for the CQR score with the constant oracle endpoints $\mu\mp z_\alpha$, which coincides with the centered CMR score. The functional dependence of the observations themselves is $\norm{G_j-G_j^*}_q=\abs{a_j}\,\norm{\varepsilon_0-\varepsilon_0'}_q$ for every finite $q$, and the failure of strong mixing is proved in Appendix~\ref{app:switch}. For the sample mean, $\EM=\abs{\Gbar}$ with $\Gbar=n^{-1}\sum_{t\le n}G_t$ and $\E\abs{\Gbar}=(2v_n/\pi)^{1/2}\le Cn^{1/2-\beta}$, where $v_n=\Var(\Gbar)$, by \eqref{eq:rhobound} and \eqref{eq:sums}, and Markov's inequality gives Assumption~\ref{ass:train} with, for example, $e_n=n^{(1/2-\beta)/2}$ and $\xi_n=Cn^{(1/2-\beta)/2}$.

\paragraph{Long memory in the scale.}
Let $Y_t=\sigma_t\varepsilon_t$ with \iid\ innovations $\varepsilon_t$ whose law is symmetric with $\E\varepsilon_t^2=1$ and with a bounded, continuous and positive density $f_\varepsilon$ satisfying $\sup_{x\ge0}xf_\varepsilon(x)<\infty$, as for the normal law, and let the volatility be $\sigma_t=g(h_t)$ with $h_t=\sum_{j\ge1}a_j\varepsilon_{t-j}$ a linear process of the past innovations whose coefficients satisfy $\abs{a_j}\le A(1+j)^{-\beta}$ for some $1/2<\beta<1$, and $g$ an $L_g$-Lipschitz function with values in $[\sigma_{\min},\sigma_{\max}]$, $\sigma_{\min}>0$. With a constant covariate the conditional median of $Y_t$ is $0$, and the oracle CMR score is $\SM_t=\sigma_t\abs{\varepsilon_t}$; its marginal density at $s$ is $\E\{f_{\abs{\varepsilon}}(s/\sigma_t)/\sigma_t\}$, which, for $\abs{s-q_\alpha}\le r$ with some $r<q_\alpha$, lies between $\sigma_{\max}^{-1}$ and $\sigma_{\min}^{-1}$ times the infimum and the supremum of $f_{\abs{\varepsilon}}$ over the compact set of arguments $s/\sigma$ with $\abs{s-q_\alpha}\le r$ and $\sigma\in[\sigma_{\min},\sigma_{\max}]$, so that Assumption~\ref{ass:local} holds. The score has the form $H(V_t,\varepsilon_t)=V_t\abs{\varepsilon_t}$ of Proposition~\ref{prop:sufficient}(b) with $V_t=\sigma_t$, and $K_s(v)=F_{\abs{\varepsilon}}(s/v)$ has $\abs{\partial_vK_s(v)}=(s/v^2)f_{\abs{\varepsilon}}(s/v)\le\sigma_{\min}^{-1}\sup_{x\ge0}xf_{\abs{\varepsilon}}(x)=L_H$ uniformly in $s$ for $v\ge\sigma_{\min}$, while $\norm{V_j-V_j^*}_2\le L_g\norm{h_j-h_j^*}_2=\sqrt2L_g\abs{a_j}\le\sqrt2L_gA(1+j)^{-\beta}$. Hence Assumption~\ref{ass:fdm} holds with $\gamma=\beta$, the score has a finite second moment since $\sigma_t\le\sigma_{\max}$, and the theorems apply with $\rho_m=m^{1/2-\beta}$: the current innovation smooths the score through the scale exactly as it does through the location in the previous example. The empirical median and the empirical quantiles of the training block satisfy Assumption~\ref{ass:train} by ergodicity (Lemma~\ref{lem:ergodicqr} with a constant covariate). Since the law of $Y_t$ is symmetric, the CQR score with the constant oracle endpoints $\mp q_\alpha$ coincides with the centered CMR score, as in the previous example.

\paragraph{Nonlinear recursions.}
Under the contraction condition of Appendix~\ref{app:smoothingproof}, the observations satisfy $\delta^Z_q(j)\le\lambda_H^j\delta^Z_q(0)$, and the scores inherit geometric decay when the oracle endpoints are Lipschitz. Proposition~\ref{prop:sufficient}(a) then gives Assumption~\ref{ass:fdm} with every $\gamma>1/2$ whenever the score has a bounded marginal density, and part (b) applies when the recursion has the additive form $Y_t=h(Z_{t-1})+\varepsilon_t$ with an innovation density bounded by $M_\varepsilon$.

\subsection{Mixing versus Functional Dependence}\label{app:switch}

This subsection compares mixing with the functional dependence measure on two examples of Section~\ref{sec:main}: Gaussian long memory is not strongly mixing, and for the Bernoulli autoregression both the $\beta$-mixing and the switch coefficients equal one, so that the coverage bounds formulated through them are vacuous.

\paragraph{Long memory is not strongly mixing.}\label{app:mixingproof}
A stationary Gaussian sequence is strongly mixing if and only if its spectral density has the form $\abs{P}^2\exp(u+\tilde v)$ with $P$ a trigonometric polynomial, $u$ and $v$ continuous and $\tilde v$ the conjugate function of $v$ \citep[Theorem~7.1]{bradley2005basic}; equivalently, $\log f-\log\abs{P}^2$ has vanishing mean oscillation for some $P$, since the functions of vanishing mean oscillation are exactly the sums $u+\tilde v$ with $u$ and $v$ continuous \citep{sarason1975functions}. Suppose $f(\lambda)=\abs{\lambda}^{2\beta-2}L(\lambda)$ with $1/2<\beta<1$ and $L$ continuous and positive at the origin, as for the linear process with the coefficients of Appendix~\ref{app:conventions}, whose spectral density is proportional to $\abs{2\sin(\lambda/2)}^{2\beta-2}$. Then $\log f-\log\abs{P}^2=-(2-2\beta+2m)\log\abs{\lambda}+\psi(\lambda)$ near the origin, where $m\ge0$ is the multiplicity of the zero of $P$ at the origin and $\psi$ is continuous, hence of vanishing mean oscillation; but $\log\abs{\lambda}$ is not, because its mean oscillation over $[-h,h]$ is the same positive number for every $h>0$, and $2-2\beta+2m>0$. Hence no such representation exists, and the sequence is not strongly mixing, in particular not $\beta$-mixing.

\paragraph{The switch coefficient of the Bernoulli autoregression.}
The coverage guarantees for split conformal prediction on a stationary series are formulated through $\beta$-mixing coefficients \citep{oliveira2024split} or through the switch coefficient of \citet{barber2026predictive}. For a sequence $V=(V_1,\ldots,V_N)$ of scores, a gap $\tau\ge0$ and $1\le k\le N-1-\tau$, the switch coefficient $\Psi_{k,\tau}(V)$ is the total variation distance between the law of $(V_1,\ldots,V_{N-\tau-k},V_{N-k+1},\ldots,V_N)$ and the law of $(V_{\tau+k+1},\ldots,V_N,V_1,\ldots,V_k)$, that is, between the laws of two blocks of lengths $N-\tau-k$ and $k$ separated by the gap $\tau$ in their two temporal orders; for the remaining values of $k$ the two laws coincide by stationarity. The coverage bound of \citet[Theorem~4]{barber2026predictive} subtracts from $1-\alpha$ a gap term, which is at least $\tau$ divided by the number of scores, and the average of $\Psi_{k,\tau}(V)$ over $k$, minimized over the gap; for stationary $\beta$-mixing sequences, the switch coefficient satisfies $\Psi_{k,\tau}(V)\le2\beta(\tau)$ \citep[Proposition~1]{barber2026predictive}. The switch coefficient cannot be bounded through the functional dependence measure in general, and the two routes separate on the first example of Appendix~\ref{app:examples}. Let $W_t=W_{t-1}/2+B_t$ be that process, for which $\delta^Z_q(j)\le2^{-j}$. Its $\beta$-mixing coefficient equals one at every lag, because the conditional law of $W_\tau$ given $\Fcal_0$ is supported on $2^\tau$ points while the marginal law is continuous. Its switch coefficients also equal one. Put $L=N-\tau-k$. Under the first law above, the coordinates at positions $L$ and $L+1$ are $W_L$ and $W_{L+\tau+1}$, and the recursion gives $2^{\tau+1}W_{L+\tau+1}-W_L=2\sum_{j=1}^{\tau+1}2^{j-1}B_{L+j}\in2\mathbb Z$ almost surely. Under the second law, the same coordinates are $W_N$ and $W_1$, and $W_N=2^{-(N-1)}W_1+D_N$ with $D_N$ a function of $B_2,\ldots,B_N$, so that $2^{\tau+1}W_1-W_N\in2\mathbb Z$ requires $W_1$ to lie in a countable set determined by $(B_2,\ldots,B_N)$; since $W_1$ is a function of $(B_s)_{s\le1}$, independent of $(B_2,\ldots,B_N)$, with a continuous law, this event has probability zero. The event $\{2^{\tau+1}v_{L+1}-v_L\in2\mathbb Z\}$ therefore has probability one under the first law and zero under the second, and $\Psi_{k,\tau}(W)=1$ for every $\tau$ and every $1\le k\le N-1-\tau$. The same holds for the CMR score $V_t=\abs{W_t-1}$ of that example, with the event that $2^{\tau+1}w'-w\in2\mathbb Z$ for some $w\in\{1\pm v_L\}$ and $w'\in\{1\pm v_{L+1}\}$, since each of the four sign combinations leads to a relation of the same kind with a nonzero coefficient of $W_1$. Consequently the average switch coefficient is at least $1-(\tau+1)/N$, the coverage bound of \citet{barber2026predictive} is vacuous for every gap once $N>1/\alpha$, and so is the $\beta$-mixing bound of \citet{oliveira2024split}, whereas Theorem~\ref{thm:cmr} with the true center, for which $E_n=0$, gives the coverage error $C_1m^{-1/3}$ with no gap.

\section{Proofs for Section \ref{sec:gaussian}}\label{app:gaussian}

This appendix proves Theorem~\ref{cor:rates} and Proposition~\ref{prop:lower} and bounds the coverage error of the interval. Write $g=\muhat-\mu$ for the error of the center, a function of the training block and of randomness independent of the series, and let $a(g)$ be the $(1-\alpha)$-quantile of $\abs{Z-g}$ for $Z\sim N(0,1)$, the radius that an infinitely long calibration block would return for a center that is off by $g$, so that $a(0)=z_\alpha$. The length of the CMR interval is $2\qR$, where $\qR$ is the $k$-th smallest of the calibration residuals $\abs{Y_{n+i}-\muhat}=\abs{G_{n+i}-g}$. Since $a(g)\ge z_\alpha$ by Lemma~\ref{lem:shiftedradius}(i), the reverse triangle inequality gives
\begin{equation}\label{eq:gaussdecomp}
\big|\abs{\qR-z_\alpha}-\{a(g)-z_\alpha\}\big|\le\abs{\qR-a(g)}.
\end{equation}
The term $a(g)-z_\alpha$ is the effect of the center on the population radius, and Appendix~\ref{app:shifted} shows that it is of second order in $g$. The right-hand side is the error of a finite calibration block that is adjacent to the training block and therefore dependent on the center, and Appendix~\ref{app:conditional} bounds it. The proof of Theorem~\ref{cor:rates} uses \eqref{eq:gaussdecomp} as an upper bound on $\abs{\qR-z_\alpha}$, and the proof of Proposition~\ref{prop:lower} uses it as a lower bound (Appendices~\ref{app:ratesproof} and~\ref{app:lowerproof}). Appendix~\ref{app:hermite} collects the Hermite expansions used in Appendix~\ref{app:conditional} and for the calibration rate $r_m(\beta)$ of Section~\ref{sec:gaussian}, and Appendix~\ref{app:quantilemse} bounds the mean squared error of empirical quantiles of the training block, which gives the rates of Theorem~\ref{cor:rates} for the empirical median and the midpoint. Appendix~\ref{app:coverageproof} uses the bound of Appendix~\ref{app:conditional} on $\abs{\qR-a(g)}$ to show that the coverage error has the order of the length bound of Theorem~\ref{cor:rates}.

Throughout this appendix, \eqref{eq:linearprocess} is in force, $C$ denotes a constant depending only on $\alpha$, $\beta$, $A$ and $a_0$ whose value may change from line to line, and constants that depend on fewer quantities are named. Two elementary consequences of \eqref{eq:linearprocess} are used repeatedly. First, the autocorrelation $\rho(h)=\Cov(G_t,G_{t+h})=\sum_{j\ge0}a_ja_{j+h}$ of the series satisfies, splitting the sum at $j=h$,
\begin{equation}\label{eq:rhobound}
\abs{\rho(h)}\le\sum_{j\ge0}\abs{a_ja_{j+h}}\le C_\rho(1+h)^{1-2\beta},
\end{equation}
where $C_\rho=A^2/(1-\beta)+A^2/(2\beta-1)$; note that $C_\rho\ge1$, because $1=\sum_ja_j^2\le A^2\sum_j(1+j)^{-2\beta}\le A^2\{1+1/(2\beta-1)\}\le C_\rho$. Moreover, the same bound holds for $\Cov(\sum_{j<i}a_j\varepsilon_{t-j},\sum_{j<i'}a_j\varepsilon_{t'-j})$, a covariance of two truncated sums at times $t$ and $t'$, in terms of $h=\abs{t-t'}$. Second, for $s\ge1$,
\begin{equation}\label{eq:sums}
\sum_{h=0}^{s-1}(1+h)^{1-2\beta}\le\frac{s^{2-2\beta}}{2-2\beta},\quad
\frac1s\sum_{h=0}^{s-1}(1+h)^{2-4\beta}\le C_\beta\,r_s(\beta)^2,\quad
\sum_{j\ge i}(1+j)^{-2\beta}\le\frac{i^{1-2\beta}}{2\beta-1}\ (i\ge1),
\end{equation}
where $C_\beta$ equals $1/(3-4\beta)$, $2$ and $1+1/(4\beta-3)$ in the respective regimes $\beta<3/4$, $\beta=3/4$ and $\beta>3/4$, and where $r_s(\beta)$ is the rate \eqref{eq:rm} with $s$ in place of $m$. We also use $s^{1-2\beta}\le r_s(\beta)$ and $1/s\le r_s(\beta)$ for every $s\ge2$; the thresholds $m_0$ and $n_0$ below are at least $2$.

\subsection{Hermite Expansions and the Calibration Rate of the Symmetric Score}\label{app:hermite}

This subsection bounds covariances of functions of a Gaussian series through their Hermite expansions. An even function has no odd Hermite coefficients, so its covariances decay like the squared autocorrelations of the series; this gives the calibration rate $r_m(\beta)$ of the symmetric score (Lemma~\ref{lem:gaussiansigma}), and the same expansion bounds the conditional variance in Appendix~\ref{app:conditional}. The reduction of even functionals of a Gaussian series to Hermite rank two goes back to \citet{breuer1983central} and \citet{dehling1989empirical}. Let $H_k$ denote the probabilists' Hermite polynomials, normalized by $\E H_j(\xi)H_k(\xi)=j!\ind{j=k}$ for $\xi\sim N(0,1)$. For a standard bivariate Gaussian pair $(\xi,\xi')$ with correlation $\varrho$, comparing coefficients in
\begin{equation}\label{eq:hermitegenerating}
\E\,e^{u\xi-u^2/2}e^{v\xi'-v^2/2}=e^{\varrho uv}
\end{equation}
gives $\E H_j(\xi)H_k(\xi')=j!\varrho^j\ind{j=k}$, and therefore, for square-integrable $\psi,\psi'$ with Hermite coefficients $\eta_k=\E\psi(\xi)H_k(\xi)$ and $\eta'_k=\E\psi'(\xi)H_k(\xi)$,
\begin{equation}\label{eq:covexpansion}
\Cov(\psi(\xi),\psi'(\xi'))=\sum_{k\ge1}\frac{\eta_k\eta'_k}{k!}\varrho^k,\qquad \Var(\psi(\xi))=\sum_{k\ge1}\frac{\eta_k^2}{k!}.
\end{equation}

\begin{lemma}\label{lem:evencov}
Let $(\xi,\xi')$ be a standard bivariate Gaussian pair with correlation $\varrho$, and let $\psi$ and $\psi'$ be even and square-integrable. Then $\abs{\Cov(\psi(\xi),\psi'(\xi'))}\le\varrho^2\{\Var(\psi(\xi))\Var(\psi'(\xi'))\}^{1/2}$.
\end{lemma}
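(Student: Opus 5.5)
The plan is to start from the covariance expansion \eqref{eq:covexpansion} and use evenness to remove every odd term. Since $\psi$ is even and $H_k(-x)=(-1)^kH_k(x)$, the coefficient $\eta_k=\E\psi(\xi)H_k(\xi)$ vanishes for odd $k$: the law of $\xi$ is symmetric, so $\E\psi(\xi)H_k(\xi)=\E\psi(-\xi)H_k(-\xi)=(-1)^k\eta_k$. The same holds for $\eta'_k$. Hence the series in \eqref{eq:covexpansion} runs only over even $k\ge2$, and
\[
\Cov(\psi(\xi),\psi'(\xi'))=\sum_{k\ge2,\ k\ \mathrm{even}}\frac{\eta_k\eta'_k}{k!}\varrho^k .
\]

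Next I bound each term by its absolute value. Because $\abs{\varrho}\le1$ and $k\ge2$, we have $\abs{\varrho}^k\le\varrho^2$. Pulling out $\varrho^2$ and applying the Cauchy--Schwarz inequality to the sequences $\eta_k/\sqrt{k!}$ and $\eta'_k/\sqrt{k!}$ gives
\[
\abs{\Cov(\psi(\xi),\psi'(\xi'))}\le\varrho^2\Big(\sum_{k\ge1}\frac{\eta_k^2}{k!}\Big)^{1/2}\Big(\sum_{k\ge1}\frac{{\eta'_k}^2}{k!}\Big)^{1/2}.
\]
By the variance identity in \eqref{eq:covexpansion}, applied to $\psi(\xi)$ and to $\psi'(\xi')$ (note that $\xi'$ is also standard normal), the right-hand side equals $\varrho^2\{\Var(\psi(\xi))\Var(\psi'(\xi'))\}^{1/2}$.

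The only point needing care is the validity of \eqref{eq:covexpansion} for general square-integrable $\psi$ and $\psi'$, rather than only for polynomials or exponentials. I would justify it through $L^2$ convergence of the Hermite expansions, since the normalized Hermite polynomials form a complete orthonormal basis of $L^2(N(0,1))$. The partial sums $\psi_N$ and $\psi'_N$ converge in $L^2$. Their covariance is the truncated sum, by the bilinear identity $\E H_j(\xi)H_k(\xi')=j!\varrho^j\ind{j=k}$. Covariance is continuous in $L^2$ in each argument, so the truncated sums converge to $\Cov(\psi(\xi),\psi'(\xi'))$. The series converges absolutely by Cauchy--Schwarz, so passing to the limit is harmless. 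I expect this limiting step to be the main obstacle; the rest is immediate.
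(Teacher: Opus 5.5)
Your proposal is correct and follows essentially the same route as the paper: evenness removes the odd Hermite coefficients, $\abs{\varrho}^k\le\varrho^2$ for even $k\ge2$, and Cauchy--Schwarz together with the Parseval identity in \eqref{eq:covexpansion} gives the bound. Your $L^2$-completeness argument for \eqref{eq:covexpansion} with general square-integrable $\psi,\psi'$ is sound, and it fills in a step the paper takes for granted.
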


\begin{proof}
The odd Hermite coefficients of an even function vanish, so \eqref{eq:covexpansion} runs over even $k\ge2$, where $\abs{\varrho^k}\le\varrho^2$; the Cauchy--Schwarz inequality and the Parseval identity in \eqref{eq:covexpansion} give the bound.
\end{proof}

For indicators of symmetric intervals, whose variances are at most $1/4$, the covariance is at most $\varrho^2/4$, including when the two intervals differ.

\begin{lemma}[Calibration rate of the symmetric score]\label{lem:gaussiansigma}
Let $\Sor_t=\abs{G_t}-z_\alpha$ and $r=z_\alpha/2$, as in Appendix~\ref{app:examples}. Then $\sigma_m\le C_\rho C_\beta^{1/2}\,r_m(\beta)$. If $\rho(h)\ge c_\rho(1+h)^{1-2\beta}$ for all $h\ge h_0$, with $c_\rho>0$, then for every real $a$ the standard deviation of $m^{-1}\sum_{i=1}^m\ind{G_{n+i}\le a}$, the corresponding quantity for a one-sided indicator, is of exact order $m^{1/2-\beta}$.
\end{lemma}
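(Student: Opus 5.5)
The first claim is a direct variance computation combined with Lemma~\ref{lem:evencov}. I fix $s$ with $\abs{s}\le r$ and set $\psi(x)=\ind{\abs{x}\le z_\alpha+s}$. This is an even function of the standardized variable $x=G_t$, since $\Var G_t=\sum_j a_j^2=1$. Its variance is at most $1/4$. The pairs $(G_{n+i},G_{n+i'})$ are standard bivariate Gaussian with correlation $\rho(\abs{i-i'})$, so Lemma~\ref{lem:evencov} gives $\abs{\Cov(\psi(G_{n+i}),\psi(G_{n+i'}))}\le\rho(\abs{i-i'})^2/4$. Summing over the $m^2$ pairs and grouping by lag $h$, each lag occurring at most $2m$ times, I get
\[
\Var\Fhat(s)\le\frac{2}{m}\sum_{h=0}^{m-1}\frac{\rho(h)^2}{4}\le\frac{C_\rho^2}{2m}\sum_{h=0}^{m-1}(1+h)^{2-4\beta},
\]
where the second step uses \eqref{eq:rhobound}. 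The second inequality in \eqref{eq:sums} bounds the right-hand side by $C_\rho^2C_\beta r_m(\beta)^2/2$. Taking the supremum over $\abs{s}\le r$ and the square root gives $\sigma_m\le C_\rho C_\beta^{1/2}r_m(\beta)$. The bound holds uniformly in $s$ because the only input is the variance bound $1/4$.

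For the one-sided statement, I take $\psi(x)=\ind{x\le a}$. Its first Hermite coefficient is $\eta_1=\E\{\xi\ind{\xi\le a}\}=-\phi(a)\ne0$. By \eqref{eq:covexpansion}, the covariance at lag $h$ is $\phi(a)^2\rho(h)+\sum_{k\ge2}\eta_k^2\rho(h)^k/k!$. For the upper bound, I bound this by $\abs{\rho(h)}\Var\psi\le\abs{\rho(h)}/4$ (Cauchy--Schwarz and Parseval, as in the proof of Lemma~\ref{lem:evencov}). The first sum in \eqref{eq:sums} then gives a variance of order $m^{-2}\cdot m\cdot m^{2-2\beta}=m^{1-2\beta}$. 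So the standard deviation is $O(m^{1/2-\beta})$.

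The lower bound is the main obstacle, because the higher-order terms may carry either sign. I would handle them through their even/odd split:
\begin{itemize}
\item the even-$k$ terms are nonnegative;
\item the odd-$k\ge3$ terms are at most $\abs{\rho(h)}^3\Var\psi$ in absolute value.
\end{itemize}
Hence the lag-$h$ covariance is at least $\phi(a)^2\rho(h)-\abs{\rho(h)}^3/4$.

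For $h\ge h_1$, with $h_1\ge h_0$ large enough, \eqref{eq:rhobound} gives $\abs{\rho(h)}^2\le2\phi(a)^2$, so for those lags the covariance is at least $\phi(a)^2\rho(h)/2\ge\phi(a)^2c_\rho(1+h)^{1-2\beta}/2$. For the finitely many lags $h<h_1$, I use only that the covariance of $\psi(G_{n+i})$ with $\psi(G_{n+i'})$ is at least $-1/4$. These lags contribute at least $-Cm$ to the variance of the sum.

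The lags in $[h_1,m/2]$ each occur at least $m/2$ times. Their total contribution is therefore at least of order $m\cdot m^{2-2\beta}$, which dominates $Cm$ because $2-2\beta>0$. Dividing by $m^2$ gives a variance of exact order $m^{1-2\beta}$, with constants depending on $a$ through $\phi(a)$. The standard deviation is therefore of exact order $m^{1/2-\beta}$.
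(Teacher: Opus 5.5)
Your proposal is correct and follows the paper's proof. The first claim is the same computation from Lemma~\ref{lem:evencov}, \eqref{eq:rhobound} and \eqref{eq:sums}, and the second is the same argument through the nonzero first Hermite coefficient $-\phi(a)$; you carry it out in more detail, handling the finitely many small lags and counting the lags up to $m/2$. Your even/odd split of the remainder is a harmless refinement, since the paper simply bounds all terms with $k\ge2$ by $\rho(h)^2/4$, and that already suffices because $\rho(h)\to0$.
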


\begin{proof}
For $\abs{s}\le r$ and $a=z_\alpha+s>0$, the indicator $\ind{\Sor_t\le s}=\ind{\abs{G_t}\le a}$ is an even function of $G_t$, so Lemma~\ref{lem:evencov} bounds each covariance $\Cov(\ind{\abs{G_{n+i}}\le a},\ind{\abs{G_{n+j}}\le a})$ by $\rho(i-j)^2/4$, and \eqref{eq:rhobound} and \eqref{eq:sums} give
\[
\Var\{\Fhat(s)\}\le\frac{1}{4m^2}\sum_{i,j\le m}\rho(i-j)^2\le\frac{C_\rho^2}{2m}\sum_{h<m}(1+h)^{2-4\beta},
\]
which is at most $C_\rho^2C_\beta r_m(\beta)^2/2$. For the one-sided indicator, the first Hermite coefficient of $x\mapsto\ind{x\le a}$ is $-\phi(a)\ne0$, and by \eqref{eq:covexpansion} and the Cauchy--Schwarz inequality the remaining terms contribute at most $\rho(h)^2/4$, so the covariance at lag $h$ is $\phi(a)^2\rho(h)+O(\rho(h)^2)$, of exact order $\rho(h)$ for all large $h$; the variance of the partial sum is therefore of order $m^{3-2\beta}$, which gives the scale $m^{1/2-\beta}$.
\end{proof}

\subsection{The Shifted Population Radius}\label{app:shifted}

This subsection deals with the term $a(g)-z_\alpha$ in \eqref{eq:gaussdecomp}, the effect of the center on the population radius. Because the law of $Z$ is symmetric, $a(g)$ is an even function of $g$, so it has no first-order term, and $a(g)-z_\alpha$ is of order $g^2$ instead of $\abs{g}$. Lemma~\ref{lem:shiftedradius} makes this precise: part (iii) gives the two-sided bounds, of order $\min(g^2,1)$ and $g^2$, that enter the proofs of Proposition~\ref{prop:lower} and Theorem~\ref{cor:rates}, and part (iv) gives the exact second-order constant; parts (i) and (ii), the position of $a(g)$ relative to $\abs{g}$ and a lower bound on the density of $\abs{Z-g}$ near $a(g)$, are used in Appendix~\ref{app:conditional}.

For $g\in\R$ and $w\ge0$, let $H_g(w)=\Prob(\abs{Z-g}\le w)=\Phi(w-g)+\Phi(w+g)-1$ with $Z\sim N(0,1)$. For each $g$ the map $w\mapsto H_g(w)$ is continuous and strictly increasing from $0$ to $1$, with derivative $H_g'(w)=\phi(w-g)+\phi(w+g)\le2\phi(0)<1$, so the $(1-\alpha)$-quantile $a(g)$ of $\abs{Z-g}$ is the unique solution of $H_g(a(g))=1-\alpha$, and $a(0)=z_\alpha$. Put
\begin{equation}\label{eq:radiusconstants}
\begin{aligned}
w_*&=\frac{z_\alpha}2,\qquad
f_*=\min\{\phi(u):\Phi^{-1}(1-\alpha)-w_*\le u\le z_\alpha+w_*\},\\
\kappa_\alpha&=\inf_{0<u\le1}\frac{\phi(z_\alpha-u)-\phi(z_\alpha+u)}{u},\qquad
f_1=\min\{\phi(u):\abs{u}\le\max(z_\alpha,1)\}.
\end{aligned}
\end{equation}
The constant $\kappa_\alpha$ is positive because the ratio is continuous and positive on $(0,1]$ with limit $2z_\alpha\phi(z_\alpha)$ at zero, and the ratio is at most $2\sup\abs{\phi'}=2\phi(1)$ by the mean value theorem.

\begin{lemma}[Shifted radius]\label{lem:shiftedradius}
\begin{enumerate}
\item[(i)] $a$ is even and nondecreasing in $\abs{g}$, and $\Phi^{-1}(1-\alpha)\le a(g)-\abs{g}\le z_\alpha$ for every $g$.
\item[(ii)] $H_g'(w)\ge f_*$ for every $g$ and every $w$ with $\abs{w-a(g)}\le w_*$.
\item[(iii)] $\tfrac12\kappa_\alpha\min(g^2,1)\le a(g)-z_\alpha\le C_\alpha g^2$ for every $g$, with $C_\alpha=\max\{\phi(1)/f_1,1\}$.
\item[(iv)] $a(g)=z_\alpha+\tfrac12z_\alpha g^2+O(g^4)$ as $g\to0$.
\end{enumerate}
\end{lemma}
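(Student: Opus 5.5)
The plan is to work throughout with $H_g(w)=\Phi(w-g)+\Phi(w+g)-1$ and its partial derivatives
\[
\partial_gH_g(w)=\phi(w+g)-\phi(w-g),\qquad \partial_wH_g(w)=\phi(w-g)+\phi(w+g),
\]
and to read off every part from the defining equation $H_g(a(g))=1-\alpha$ by the implicit function theorem. Since $\partial_wH_g>0$ everywhere, $a$ is smooth with
\[
a'(g)=\frac{\phi(a-g)-\phi(a+g)}{\phi(a-g)+\phi(a+g)}.
\]

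\textbf{Part (i).} Evenness follows from $H_{-g}=H_g$. For $g>0$ and $a>0$ we have $\abs{a-g}<a+g$, so $\phi(a-g)>\phi(a+g)$ and $a'(g)>0$. This gives monotonicity in $\abs{g}$, and in particular $a(g)\ge z_\alpha$.

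For the sandwich $\Phi^{-1}(1-\alpha)\le a(g)-\abs{g}\le z_\alpha$, take $g\ge0$.
\begin{itemize}
\item Upper bound: $H_g(g+z_\alpha)=\Phi(z_\alpha)+\Phi(2g+z_\alpha)-1\ge2\Phi(z_\alpha)-1=1-\alpha$, so $a(g)\le g+z_\alpha$.
\item Lower bound: with $w=g+\Phi^{-1}(1-\alpha)$ we get $H_g(w)=\Phi(\Phi^{-1}(1-\alpha))+\Phi(2g+\cdot)-1\le1-\alpha$, so $a(g)\ge w$.
\end{itemize}

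\textbf{Part (ii).} Write $w=a(g)+\delta$ with $\abs{\delta}\le w_*$, and take $g\ge0$ by evenness. By (i), the argument $w-g$ lies in $[\Phi^{-1}(1-\alpha)-w_*,\,z_\alpha+w_*]$. On that interval $\phi\ge f_*$ by definition, and therefore $H_g'(w)\ge\phi(w-g)\ge f_*$.

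\textbf{Part (iv).} Differentiate the identity $H_g(a(g))=1-\alpha$ twice at $g=0$. Since $a'(0)=0$ and $\partial_g^2H_g(w)\big|_{g=0}=2\phi'(w)=-2w\phi(w)$, the second-order identity reads
\[
2\phi(z_\alpha)a''(0)-2z_\alpha\phi(z_\alpha)=0,
\]
so $a''(0)=z_\alpha$. Evenness and smoothness then give the $O(g^4)$ remainder.

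\textbf{Part (iii), upper bound.} For $g\le1$, use $a'(g)$ together with the mean value theorem: the numerator is at most $2g\sup\abs{\phi'}=2g\phi(1)$. For the denominator, (i) places $a-g$ in $[\Phi^{-1}(1-\alpha),z_\alpha]\subset[-\max(z_\alpha,1),\max(z_\alpha,1)]$, so the denominator is at least $\phi(a-g)\ge f_1$. Integrating $a'\le\phi(1)g/f_1$ gives $a(g)-z_\alpha\le\phi(1)g^2/(2f_1)$. For $g>1$, (i) gives $a(g)-z_\alpha\le g\le g^2$. Combining the two ranges yields the constant $C_\alpha=\max\{\phi(1)/f_1,1\}$.

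\textbf{Part (iii), lower bound.} This is the main obstacle. Monotonicity reduces the case $\abs{g}\ge1$ to $g=1$, so it suffices to show $a(g)-z_\alpha\ge\tfrac12\kappa_\alpha g^2$ for $0<g\le1$.

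The route I would try first is to evaluate the defect at the unshifted radius $z_\alpha$. Using $\partial_gH_g(z_\alpha)=\phi(z_\alpha+g)-\phi(z_\alpha-g)\le-\kappa_\alpha g$,
\[
1-\alpha-H_g(z_\alpha)=\int_0^g\{\phi(z_\alpha-u)-\phi(z_\alpha+u)\}\,du\ge\tfrac12\kappa_\alpha g^2.
\]
Then $H_g(a(g))-H_g(z_\alpha)\ge\tfrac12\kappa_\alpha g^2$. Since $H_g'\le2\phi(0)<1$, the mean value theorem gives
\[
a(g)-z_\alpha\ge\frac{\kappa_\alpha g^2}{2\cdot2\phi(0)}\ge\tfrac12\kappa_\alpha g^2 .
\]
This route avoids needing a lower bound on $a'$ along the whole path, which is why I would take it rather than integrating $a'$ directly.
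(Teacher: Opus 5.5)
Your proof is correct, and for (i), (ii), (iv) and the lower bound in (iii) it is the paper's argument. Two remarks on those parts. First, you use $H_g'\le2\phi(0)$ where the paper uses only $H_g'<1$, which merely sharpens the constant. Second, your monotonicity via the sign of the implicit derivative $a'$ is the same fact as the paper's $\partial_gH_g(w)\le0$.

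The one real difference is the upper bound in (iii). You integrate $a'(u)$ over $u\in[0,g]$. The paper instead fixes $g$, notes $H_g'\ge f_1$ on $[z_\alpha,a(g)]$, and divides the defect $H_0(z_\alpha)-H_g(z_\alpha)\le\phi(1)g^2$ by $f_1$. The paper's route reuses the same defect integral for both bounds and does not need differentiability of $a$. Yours is equally short once smoothness of $a$ is granted.

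There are two small slips in your upper bound; neither affects the conclusion.
\begin{enumerate}
\item The inclusion $[\Phi^{-1}(1-\alpha),z_\alpha]\subset[-\max(z_\alpha,1),\max(z_\alpha,1)]$ fails when $\alpha>\Phi(1)\approx0.84$, since then $\Phi^{-1}(1-\alpha)<-1$ while $z_\alpha<1$. Instead use $z_\alpha\le a(u)\le u+z_\alpha$ from (i), as the paper does. For $0\le u\le1$ this gives $a(u)-u\in[z_\alpha-1,z_\alpha]$, where $\phi\ge f_1$ for every $\alpha$.
\item Your numerator bound gives $a'(u)\le2\phi(1)u/f_1$, not $\phi(1)u/f_1$. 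Integration therefore yields $a(g)-z_\alpha\le\phi(1)g^2/f_1$, which is the paper's bound, rather than half of it. This is still at most $C_\alpha g^2$.
\end{enumerate}
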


\begin{proof}
(i) $H_g(w)$ is even in $g$, and $\partial_gH_g(w)=\phi(w+g)-\phi(w-g)\le0$ for $g,w\ge0$, so $H_g(w)$ is nonincreasing in $\abs{g}$ and $a(g)$ is nondecreasing in $\abs{g}$; this is Anderson's inequality for the intervals $[g-w,g+w]$. For $g\ge0$, $1-\alpha=H_g(a(g))\le\Phi(a(g)-g)$ gives the lower bound on $a(g)-g$, and $H_g(g+z_\alpha)=\Phi(z_\alpha)+\Phi(2g+z_\alpha)-1\ge2\Phi(z_\alpha)-1=1-\alpha$ gives $a(g)\le g+z_\alpha$; evenness covers $g<0$.

(ii) For $g\ge0$ and $w\ge0$, $\abs{w-g}\le w+g$, so $H_g'(w)\ge\phi(w-g)=\phi(w-\abs{g})$, and the same holds for $g<0$ by evenness. If $\abs{w-a(g)}\le w_*$, then by (i) $w-\abs{g}\in[\Phi^{-1}(1-\alpha)-w_*,z_\alpha+w_*]$, on which $\phi\ge f_*$.

(iii) For $0\le g\le1$,
\[
H_0(z_\alpha)-H_g(z_\alpha)=\int_0^g\{\phi(z_\alpha-u)-\phi(z_\alpha+u)\}\,du\in\big[\kappa_\alpha g^2/2,\ \phi(1)g^2\big].
\]
Since $a(g)\ge z_\alpha$ and $H_g'<1$, $1-\alpha=H_g(a(g))\le H_g(z_\alpha)+a(g)-z_\alpha$, and $1-\alpha=H_0(z_\alpha)$, so $a(g)-z_\alpha\ge H_0(z_\alpha)-H_g(z_\alpha)\ge\kappa_\alpha g^2/2$; for $\abs{g}\ge1$, monotonicity gives $a(g)-z_\alpha\ge a(1)-z_\alpha\ge\kappa_\alpha/2$. For the upper bound with $\abs{g}\le1$, every $u\in[z_\alpha,a(g)]$ satisfies $u-\abs{g}\in[z_\alpha-1,z_\alpha]$ by (i), so $H_g'(u)\ge\phi(u-\abs{g})\ge f_1$ there, and $H_g(a(g))-H_g(z_\alpha)=H_0(z_\alpha)-H_g(z_\alpha)\le\phi(1)g^2$ gives $a(g)-z_\alpha\le\phi(1)g^2/f_1$; for $\abs{g}>1$, (i) gives $a(g)-z_\alpha\le\abs{g}\le g^2$.

(iv) The function $(g,w)\mapsto H_g(w)$ is smooth with $\partial_wH_g(w)>0$, so $a$ is smooth by the implicit function theorem, and even, so $a'(0)=0$. Differentiating $H_g(a(g))=1-\alpha$ twice at $g=0$ gives $\partial_g^2H_0(z_\alpha)+\partial_wH_0(z_\alpha)a''(0)=0$ with $\partial_g^2H_0(z_\alpha)=2\phi'(z_\alpha)=-2z_\alpha\phi(z_\alpha)$ and $\partial_wH_0(z_\alpha)=2\phi(z_\alpha)$, whence $a''(0)=z_\alpha$, and the odd derivatives vanish.
\end{proof}

The second-order behavior of residual empirical processes on the estimation sample under long memory is classical \citep{beran1991slowly,koul2010goodness,chan2008residual,lorek2014empirical}; there it arises from the exact cancellation of the first-order term of the error process by the estimated location, which uses the same observations, whereas here the residuals are out of sample, the object is the expected error of an order statistic at finite $n$ and $m$, and the mechanism is the evenness of the population radius. The mechanism rests on the symmetry of the error law and not on the absolute-value form of the score: for a skewed error the radius $a(g)$ is no longer even, its derivative at $g=0$ is proportional to the difference of the error density at the two oracle endpoints, and the center error re-enters the length at first order.

For symmetric innovations that are not Gaussian, Lemma~\ref{lem:shiftedradius}(i)--(iii) remain valid with the marginal density of the series in place of $\phi$, provided that it is positive, continuously differentiable, nonincreasing on $[0,\infty)$ and has a negative derivative at the radius, with the constants redefined accordingly; the rates, however, use Gaussianity throughout, in the Hermite expansions of Lemmas~\ref{lem:evencov} and \ref{lem:gaussiansigma}, in the Gaussian conditioning of Lemmas~\ref{lem:condmean}--\ref{lem:finitecal}, and in the Gaussian H\"older inequality of Lemma~\ref{lem:quantilemse}.

\subsection{Calibration Residuals Conditional on the Training Innovations}\label{app:conditional}

This subsection deals with the right-hand side of \eqref{eq:gaussdecomp}, the error of a finite calibration block. The difficulty is that the calibration block is adjacent to the training block, so that the center and the calibration residuals are dependent. We condition on the innovations up to time $n$: each calibration observation splits into a part that is determined by these innovations and a Gaussian part that is independent of them, so that, conditionally, the center is fixed and each residual is Gaussian, shifted by the error of the center and by the memory of the training period, which decays with the distance from the training block. Lemmas~\ref{lem:condmean} and~\ref{lem:condvar} bound the conditional mean and variance of the empirical distribution function of the residuals, and Lemma~\ref{lem:finitecal} turns these bounds into a bound on $\E\abs{\qR-a(g)}$ of order $r_m(\beta)$, up to a term that couples the error of the center with the memory of the training period.

Let $\Fcal_n$ denote the sigma-field generated by the innovations up to time $n$ and by the randomness of the training algorithm, so that $g=\muhat-\mu$ is $\Fcal_n$-measurable. For $1\le i\le m+1$ decompose the future observation into its predictable and its innovative parts,
\begin{equation}\label{eq:decomposition}
G_{n+i}=\Pi_i+U_i,\qquad \Pi_i=\sum_{j\ge i}a_j\varepsilon_{n+i-j},\qquad U_i=\sum_{j=0}^{i-1}a_j\varepsilon_{n+i-j},
\end{equation}
so that $\Pi_i$ is $\Fcal_n$-measurable and $(U_1,\ldots,U_{m+1})$ is a centered Gaussian vector independent of $\Fcal_n$, with $s_i^2=\Var(U_i)=\sum_{j<i}a_j^2\in[a_0^2,1]$ and $\tau_i=\E \Pi_i^2=\sum_{j\ge i}a_j^2=1-s_i^2\le A^2i^{1-2\beta}/(2\beta-1)$ by \eqref{eq:sums}. Put
\begin{equation}\label{eq:pastquantities}
T_1=\frac1m\sum_{i=1}^m\abs{\Pi_i},\quad \Pi_2=\frac1m\sum_{i=1}^m\Pi_i^2,\quad \bar\tau=\frac1m\sum_{i=1}^m\tau_i\le C_\tau m^{1-2\beta},\quad C_\tau=\frac{A^2}{(2\beta-1)(2-2\beta)},
\end{equation}
where the bound on $\bar\tau$ follows from \eqref{eq:sums}; note that $\E \Pi_2=\bar\tau$ and $\E T_1^2\le\bar\tau$ by Jensen's inequality. By \eqref{eq:rhobound} applied to the truncated sums, $\abs{\Cov(U_i,U_j)}\le C_\rho(1+\abs{i-j})^{1-2\beta}$, so the correlations $R_{ij}=\Cov(U_i,U_j)/(s_is_j)$ satisfy $\abs{R_{ij}}\le C_\rho a_0^{-2}(1+\abs{i-j})^{1-2\beta}$, and by \eqref{eq:sums}
\begin{equation}\label{eq:Rsums}
\max_{i\le m}\sum_{j=1}^m\abs{R_{ij}}\le\frac{C_\rho}{a_0^2}\,\frac{m^{2-2\beta}}{1-\beta},\qquad
\frac1{m^2}\sum_{i,j=1}^mR_{ij}^2\le\frac{2C_\rho^2C_\beta}{a_0^4}\,r_m(\beta)^2.
\end{equation}
Finally, for $w\ge0$, $b\in\R$ and $s\in[\abs{a_0},1]$, let $\pi_w(b,s)=\Prob(\abs{s\xi+b}\le w)=\Phi\{(w-b)/s\}-\Phi\{(-w-b)/s\}$ for $\xi\sim N(0,1)$; then $\pi_w(-g,1)=H_g(w)$, and
\begin{equation}\label{eq:piderivatives}
\partial_b\pi_w(0,s)=0,\qquad\abs{\partial_b^2\pi_w(b,s)}\le\frac{2\phi(1)}{a_0^2},\qquad\abs{\partial_s\pi_w(b,s)}\le\frac{2\phi(1)}{\abs{a_0}},
\end{equation}
because $\partial_b\pi_w(b,s)=s^{-1}[\phi\{(w+b)/s\}-\phi\{(w-b)/s\}]$, $\abs{\phi'}\le\phi(1)$ and $\abs{x\phi(x)}\le\phi(1)$.

\begin{lemma}[Conditional mean]\label{lem:condmean}
Let $\Hhat(w)=m^{-1}\sum_{i=1}^m\ind{\abs{G_{n+i}-g}\le w}$ for $w\ge0$, with $g$ $\Fcal_n$-measurable. Then, for every $\Fcal_n$-measurable $w\ge0$,
\[
\big|\E\{\Hhat(w)\mid\Fcal_n\}-H_g(w)\big|\le B,\qquad B=K_1\big(\abs{g}T_1+\Pi_2+\bar\tau\big),\qquad K_1=\frac{2\phi(1)}{\min(a_0^2,\abs{a_0})},
\]
and $\E B\le K_1\{(\bar\tau\,\E g^2)^{1/2}+2\bar\tau\}$.
\end{lemma}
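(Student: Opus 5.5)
Conditionally on $\Fcal_n$, the quantities $g$, $w$ and every $\Pi_i$ are fixed, while $U_i\sim N(0,s_i^2)$ is independent of them. Hence
\[
\E\{\ind{\abs{G_{n+i}-g}\le w}\mid\Fcal_n\}=\Prob(\abs{s_i\xi+\Pi_i-g}\le w)=\pi_w(\Pi_i-g,s_i),
\]
and the conditional mean of $\Hhat(w)$ is $m^{-1}\sum_i\pi_w(\Pi_i-g,s_i)$. The target is $H_g(w)=\pi_w(-g,1)$. I plan to bound each summand's deviation from $\pi_w(-g,1)$ using two steps: first change $s_i$ to $1$, then change the shift $\Pi_i-g$ to $-g$. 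The derivative bounds \eqref{eq:piderivatives} control both steps.

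For the scale step, $\abs{\pi_w(b,s_i)-\pi_w(b,1)}\le 2\phi(1)\abs{a_0}^{-1}(1-s_i)$. Since $s_i\le1$, we have $1-s_i\le 1-s_i^2=\tau_i$, so averaging over $i$ gives at most $2\phi(1)\abs{a_0}^{-1}\bar\tau\le K_1\bar\tau$.

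For the shift step, I use a second-order Taylor expansion in $b$ around $-g$ at $s=1$:
\[
\pi_w(-g+\Pi_i,1)-\pi_w(-g,1)=\partial_b\pi_w(-g,1)\Pi_i+R_i,\qquad \abs{R_i}\le \phi(1)a_0^{-2}\Pi_i^2 .
\]
The linear term is not zero at $b=-g$; only $\partial_b\pi_w(0,s)=0$ is given. I will therefore write $\partial_b\pi_w(-g,1)=\partial_b\pi_w(-g,1)-\partial_b\pi_w(0,1)$ and bound it by $2\phi(1)a_0^{-2}\abs{g}$ with the mean value theorem and the second-derivative bound. This gives a linear term of at most $2\phi(1)a_0^{-2}\abs{g}\abs{\Pi_i}$. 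Averaging yields a contribution of at most $K_1(\abs{g}T_1+\Pi_2)$. Adding the scale step gives $B$. The constant $K_1=2\phi(1)/\min(a_0^2,\abs{a_0})$ covers both derivative constants, and the bound holds uniformly in the $\Fcal_n$-measurable $w$ because \eqref{eq:piderivatives} is uniform in $w$.

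For $\E B$, Cauchy--Schwarz gives $\E\abs{g}T_1\le(\E g^2)^{1/2}(\E T_1^2)^{1/2}\le(\bar\tau\E g^2)^{1/2}$, using $\E T_1^2\le\bar\tau$. The remaining terms satisfy $\E\Pi_2=\bar\tau$, so $\E B\le K_1\{(\bar\tau\E g^2)^{1/2}+2\bar\tau\}$.

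The step I consider most delicate is the first-order term. It must come out proportional to $\abs{g}\abs{\Pi_i}$ and not to $\abs{\Pi_i}$ alone, because this product is what keeps the overall rate at second order. The evenness of $\pi_w$ in $b$, which gives $\partial_b\pi_w(0,s)=0$, is the fact that makes this work. I also need to check that the conditional independence of $U_i$ from $\Fcal_n$ still holds when the training randomness is included, which it does since that randomness is independent of the series.
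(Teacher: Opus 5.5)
Your proof is correct and follows the paper's argument. You use the conditional probability $\pi_w(\Pi_i-g,s_i)$, a second-order Taylor expansion in the shift whose linear coefficient is bounded by $2\phi(1)a_0^{-2}\abs{g}$ because $\partial_b\pi_w(0,s)=0$, a mean-value bound in the scale with $1-s_i\le\tau_i$, and Cauchy--Schwarz for $\E B$. The only difference is that you change the scale before the shift, whereas the paper does the shift at $s_i$ first; this does not matter, since the bounds in \eqref{eq:piderivatives} hold uniformly in $b$ and in $s\in[\abs{a_0},1]$.
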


\begin{proof}
Conditionally on $\Fcal_n$, $\abs{G_{n+i}-g}=\abs{U_i+\Pi_i-g}$ with $U_i\sim N(0,s_i^2)$, so $\E\{\ind{\abs{G_{n+i}-g}\le w}\mid\Fcal_n\}=\pi_w(\Pi_i-g,s_i)$. A Taylor expansion in the first argument about $-g$ and \eqref{eq:piderivatives} give $\abs{\pi_w(\Pi_i-g,s_i)-\pi_w(-g,s_i)}\le\abs{\partial_b\pi_w(-g,s_i)}\abs{\Pi_i}+\phi(1)\Pi_i^2/a_0^2\le2\phi(1)a_0^{-2}(\abs{g}\abs{\Pi_i}+\Pi_i^2/2)$, since $\partial_b\pi_w(\cdot,s)$ vanishes at zero and is $2\phi(1)a_0^{-2}$-Lipschitz. The mean value theorem in the second argument gives $\abs{\pi_w(-g,s_i)-\pi_w(-g,1)}\le2\phi(1)\abs{a_0}^{-1}(1-s_i)\le2\phi(1)\abs{a_0}^{-1}\tau_i$, because $1-s_i\le1-s_i^2=\tau_i$. Averaging over $i$ proves the first claim, and the Cauchy--Schwarz inequality with $\E T_1^2\le\bar\tau$ and $\E \Pi_2=\bar\tau$ proves the second.
\end{proof}

\begin{lemma}[Conditional variance]\label{lem:condvar}
In the setting of Lemma~\ref{lem:condmean}, for every $\Fcal_n$-measurable $w\ge0$,
\[
\Var\{\Hhat(w)\mid\Fcal_n\}\le W,\qquad W=K_2\big\{r_m(\beta)^2+m^{1-2\beta}(\Pi_2+g^2)\big\},
\]
where $K_2=\max\{8\phi(1)^2C_\rho/(a_0^4(1-\beta)),\ C_\rho^2C_\beta/(2a_0^4)\}$.
\end{lemma}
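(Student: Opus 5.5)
Conditionally on $\Fcal_n$, write $\Hhat(w)=m^{-1}\sum_{i=1}^m\ind{\abs{U_i+\Pi_i-g}\le w}$ with $(U_i)$ centered Gaussian and independent of $\Fcal_n$, and $\Pi_i$, $g$, $w$ fixed. The conditional variance is then $m^{-2}\sum_{i,j}\Cov(\psi_i(U_i),\psi_j(U_j))$, where $\psi_i(x)=\ind{\abs{x+b_i}\le w}$ and $b_i=\Pi_i-g$. Standardize $\xi_i=U_i/s_i$, so that $(\xi_i,\xi_j)$ is a standard Gaussian pair with correlation $R_{ij}$. I would split each $\psi_i$ into an even and an odd part in $\xi_i$:
\[
\psi_i^{e}(x)=\tfrac12\{\psi_i(x)+\psi_i(-x)\},\qquad \psi_i^{o}=\psi_i-\psi_i^{e}.
\]
The covariance of the sum is then controlled by the even--even terms plus the terms involving an odd part. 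Cross terms between an even and an odd function vanish by the Hermite expansion \eqref{eq:covexpansion}, because they share no common coefficient index.

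\emph{Even part.} Lemma~\ref{lem:evencov} bounds each covariance by $R_{ij}^2\{\Var\psi_i^e\Var\psi_j^e\}^{1/2}\le R_{ij}^2/4$. Summing and using the second bound in \eqref{eq:Rsums} gives a contribution of at most $C_\rho^2C_\beta a_0^{-4}r_m(\beta)^2/2$. This is the first constant in $K_2$.

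\emph{Odd part.} The odd part satisfies
\[
\psi_i^o(x)=\tfrac12\big[\ind{\abs{s_ix+b_i}\le w}-\ind{\abs{s_ix-b_i}\le w}\big],
\]
which is supported on the symmetric difference of two intervals shifted by $\pm b_i$. Its Hermite coefficients are all odd, so
\[
\abs{\Cov(\psi_i^o(\xi_i),\psi_j^o(\xi_j))}\le\abs{R_{ij}}\,\|\psi_i^o\|_2\|\psi_j^o\|_2 .
\]
The key estimate is $\|\psi_i^o\|_2^2\le\E\abs{\psi_i^o(\xi)}\le C\abs{b_i}/\abs{a_0}$, since the symmetric difference has Lebesgue length at most $4\abs{b_i}/s_i$ and the Gaussian density is at most $\phi(0)$. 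For a cleaner square bound I would instead use the first Hermite coefficient: $\eta_1=\E\psi_i^o(\xi)\xi=-\partial_b\pi_w(b_i,s_i)\cdot s_i$-type expression, which by \eqref{eq:piderivatives} is at most $2\phi(1)a_0^{-2}\abs{b_i}$ in absolute value. Whichever route gives the final $m^{1-2\beta}(\Pi_2+g^2)$ scaling, I would then apply $2\abs{xy}\le x^2+y^2$ to get
\[
\sum_{i,j}\abs{R_{ij}}\,\|\psi_i^o\|\,\|\psi_j^o\|\le \max_i\sum_j\abs{R_{ij}}\cdot\sum_i\|\psi_i^o\|^2 ,
\]
and bound $\max_i\sum_j\abs{R_{ij}}$ by the first bound in \eqref{eq:Rsums}, namely $C_\rho a_0^{-2}m^{2-2\beta}/(1-\beta)$. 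With $\sum_i b_i^2\le2m(\Pi_2+g^2)$, dividing by $m^2$ yields $m^{1-2\beta}(\Pi_2+g^2)$ times the other constant in $K_2$.

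\textbf{Main obstacle.} The hard step is getting $\|\psi_i^o\|_2^2\lesssim b_i^2$ rather than only $\abs{b_i}$. The $L^2$ norm of a difference of indicators is genuinely of order $\abs{b_i}$, not $b_i^2$, when $\abs{b_i}$ is small. The fix I would try is to keep all odd Hermite orders $k\ge1$ together, bound $\abs{R_{ij}}^k\le\abs{R_{ij}}$, and control $\sum_{k\text{ odd}}\eta_{i,k}^2/k!=\Var\psi_i^o$ only where needed. For the leading term $k=1$, the coefficient is exactly the first-order derivative, which is $O(\abs{b_i})$ by \eqref{eq:piderivatives}; its square gives $b_i^2$. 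For $k\ge3$ I would use $\abs{R_{ij}}^3\le R_{ij}^2$ and $\Var\psi^o\le1/4$, and absorb these terms into the even-type $r_m(\beta)^2$ bound. I expect this splitting, the $k=1$ term with the $b_i^2$ bound and the $k\ge2$ terms with the $R^2$ bound, to give exactly the two terms of $W$.
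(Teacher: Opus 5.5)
Your final plan is the paper's proof. You isolate the $k=1$ term of \eqref{eq:covexpansion} and bound $\abs{\eta_{i,1}}$ linearly in $\abs{b_i}$. You bound all $k\ge2$ terms by $R_{ij}^2/4$ through Cauchy--Schwarz and Parseval. You then sum using $\max_i\sum_j\abs{R_{ij}}$ and $\sum_ib_i^2\le2m(\Pi_2+g^2)$. The even/odd split is unnecessary, because splitting the full expansion into $k=1$ and $k\ge2$ already does the job.

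Two details matter if you want the explicit $K_2$ of the statement.

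(i) Bound the first coefficient directly. It is $\eta_{i,1}=\phi\{(-w-b_i)/s_i\}-\phi\{(w-b_i)/s_i\}$, which vanishes at $b_i=0$ and is $2\phi(1)/s_i$-Lipschitz in $b_i$, so $\abs{\eta_{i,1}}\le2\phi(1)\abs{b_i}/\abs{a_0}$. Routing it through \eqref{eq:piderivatives}, as you propose, gives only $2\phi(1)a_0^{-2}\abs{b_i}$. That produces $a_0^{-6}$ instead of $a_0^{-4}$ in the first constant of $K_2$.

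(ii) If you keep the parity split, do not bound the even terms and the odd $k\ge3$ terms by $R_{ij}^2/4$ separately, since that doubles the second constant. Combine them by Cauchy--Schwarz over all $k\ge2$, using $\Var\psi_i=\Var\psi_i^e+\Var\psi_i^o\le1/4$.
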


\begin{proof}
Write $\xi_i=U_i/s_i$ and $b_i=\Pi_i-g$, so that, conditionally on $\Fcal_n$, the summand $\ind{\abs{G_{n+i}-g}\le w}=\psi_i(\xi_i)$ with $\psi_i(x)=\ind{\abs{s_ix+b_i}\le w}$ and $(\xi_i)$ a standard Gaussian vector with correlations $R_{ij}$. The first Hermite coefficient of $\psi_i$ is $\eta_{i,1}=\E[\xi\ind{(-w-b_i)/s_i\le\xi\le(w-b_i)/s_i}]=\phi\{(-w-b_i)/s_i\}-\phi\{(w-b_i)/s_i\}$, which vanishes at $b_i=0$ and is $2\phi(1)\abs{a_0}^{-1}$-Lipschitz in $b_i$, so $\abs{\eta_{i,1}}\le2\phi(1)\abs{a_0}^{-1}\abs{b_i}$. By \eqref{eq:covexpansion}, the Cauchy--Schwarz inequality over $k\ge2$ and $\Var(\psi_i(\xi_i))\le1/4$,
\[
\abs{\Cov(\psi_i(\xi_i),\psi_j(\xi_j)\mid\Fcal_n)}\le\frac{4\phi(1)^2}{a_0^2}\abs{b_ib_j}\abs{R_{ij}}+\frac{R_{ij}^2}4,
\]
also on the diagonal. Since $\sum_{i,j}\abs{b_ib_j}\abs{R_{ij}}\le(\max_i\sum_j\abs{R_{ij}})\sum_ib_i^2$ for a symmetric matrix with nonnegative entries, and $\sum_ib_i^2\le2m(\Pi_2+g^2)$, the bounds \eqref{eq:Rsums} give
\[
\Var\{\Hhat(w)\mid\Fcal_n\}\le\frac{4\phi(1)^2}{a_0^2}\cdot\frac{C_\rho m^{2-2\beta}}{a_0^2(1-\beta)}\cdot\frac{2m(\Pi_2+g^2)}{m^2}+\frac{C_\rho^2C_\beta}{2a_0^4}r_m(\beta)^2,
\]
which is the claim.
\end{proof}

\begin{lemma}[Finite-calibration comparison]\label{lem:finitecal}
Let $g$ be $\Fcal_n$-measurable with $\E g^2<\infty$, and let $\qR$ be the $k$-th smallest of $\abs{G_{n+i}-g}$, $1\le i\le m$. There are $m_0$, depending only on $\alpha$, and $K_3$, depending only on $\alpha$, $\beta$, $A$ and $a_0$, such that for $m\ge m_0$,
\[
\E\abs{\qR-a(g)}\le K_3\big\{r_m(\beta)+m^{1-2\beta}+(m^{1-2\beta}\E g^2)^{1/2}\big\}.
\]
\end{lemma}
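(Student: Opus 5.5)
The plan is to work conditionally on $\Fcal_n$ and rerun the two-threshold Chebyshev argument of Proposition~\ref{prop:quantileform}, with the population distribution function $F$ replaced by $H_g$. The conditional mean of $\Hhat$ is controlled by Lemma~\ref{lem:condmean} and its conditional variance by Lemma~\ref{lem:condvar}. Since $g$ is $\Fcal_n$-measurable, the point $a(g)$ is fixed conditionally, and $w=a(g)\pm t$ is an admissible $\Fcal_n$-measurable threshold in both lemmas.

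\emph{Conditional tail bound.} Fix $0<t\le w_*$. By Lemma~\ref{lem:shiftedradius}(ii), $H_g(a(g)+t)-(1-\alpha)\ge f_*t$ and $(1-\alpha)-H_g(a(g)-t)\ge f_*t$. The event $\qR>a(g)+t$ implies $\Hhat(a(g)+t)<k/m$. With $d_m=k/m-(1-\alpha)<2/m$, this forces
\[
H_g(a(g)+t)-\Hhat(a(g)+t)>f_*t-d_m .
\]
Subtracting the bias bound $B$ of Lemma~\ref{lem:condmean} leaves a centered deviation exceeding $f_*t-d_m-B$. The event $\qR<a(g)-t$ is handled the same way. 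Chebyshev's inequality with Lemma~\ref{lem:condvar} then gives, for $f_*t\ge 2(d_m+B)$,
\[
\Prob\{\abs{\qR-a(g)}>t\mid\Fcal_n\}\le \min\{1,\,8W/(f_*t)^2\}.
\]

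\emph{Conditional expectation.} Integrate this tail over $t\in[0,w_*]$. The region $t<2(d_m+B)/f_*$ contributes at most $2(d_m+B)/f_*$, and the Chebyshev part contributes at most $C\sqrt W$. Together,
\[
\E\{\min(\abs{\qR-a(g)},w_*)\mid\Fcal_n\}\le C\{1/m+B+\sqrt W\}.
\]
To make the integral honest when $2(d_m+B)/f_*>w_*$, use the trivial bound $w_*\le C B$ on that event. The constraint $m\ge m_0$ is used only to make $d_m$ small relative to $f_*w_*$ and to guarantee $\min(k,m-k+1)\ge cm$.

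\emph{Truncation.} The part beyond $w_*$ is handled as in \eqref{eq:rankmoment}. First, $\abs{\qR-a(g)}\le\qR+a(g)$. The rank argument of \eqref{eq:rankM} gives $\qR^2\le C m^{-1}\sum_i(G_{n+i}-g)^2$. By Lemma~\ref{lem:shiftedradius}(i), $a(g)\le\abs g+z_\alpha$. Hence $\E[\abs{\qR-a(g)}^2\mid\Fcal_n]\le C(1+g^2+\Pi_2)$. By Cauchy--Schwarz, the contribution beyond $w_*$ is at most $C(1+\abs g+\Pi_2^{1/2})\sqrt{8W}/(f_*w_*)$.

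\emph{Taking expectations.} Use $\E B\le K_1\{(\bar\tau\E g^2)^{1/2}+2\bar\tau\}$ and $\bar\tau\le C_\tau m^{1-2\beta}$. For the variance term,
\[
\E\sqrt W\le C\{r_m(\beta)+(m^{1-2\beta}(\bar\tau+\E g^2))^{1/2}\},
\]
and $m^{1-2\beta}\bar\tau^{1/2}\le C m^{1-2\beta}$. Combined with $1/m\le r_m(\beta)$, this yields the three terms of the claimed bound.

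\emph{Main obstacle.} The hardest step is the truncation cross term $\E[(1+\abs g+\Pi_2^{1/2})\sqrt W]$. Expanding, it contains pieces like $\E[\abs g\,r_m(\beta)]$ and $\E[g^2 m^{1/2-\beta}]$, which are not obviously of the target order. If the crude bound fails, I would first sharpen the tail for large $t$. For $t>w_*$, monotonicity of $H_g$ still gives a margin of at least $f_*w_*$. A fourth-moment (Gaussian hypercontractivity) bound on $\Hhat$ then makes $\Prob(\abs{\qR-a(g)}>w_*\mid\Fcal_n)$ of order $W$ rather than $\sqrt W$. Its product with the $L^2$ size $C(1+\abs g+\Pi_2^{1/2})$ is then of lower order after Cauchy--Schwarz.

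Alternatively, I would split on $\{\abs g\le1\}$. On its complement, the bound $\Prob(\abs g>1)\le\E g^2$ lets the term be absorbed into $(m^{1-2\beta}\E g^2)^{1/2}$ when $\E g^2$ is small; when $\E g^2$ is large, the bound follows trivially from $\E\abs{\qR-a(g)}\le C(1+\E\abs g)$.
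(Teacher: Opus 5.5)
Your conditional tail bound and the integration of $\min(\abs{\qR-a(g)},w_*)$ over $[0,w_*]$ are fine and match Steps~1 and~3 of the paper's proof. The part beyond $w_*$, which you flag as the main obstacle, is a genuine gap, and none of your remedies closes it.

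\textbf{The missing idea.} The source of the trouble is your conditional second moment $C(1+g^2+\Pi_2)$. You obtain it by bounding $\qR$ and $a(g)$ separately, which discards the cancellation between $\abs{G_{n+i}-g}$ and $a(g)$. Lemma~\ref{lem:shiftedradius}(i) gives two-sided control, $\Phi^{-1}(1-\alpha)\le a(g)-\abs g\le z_\alpha$. Hence, uniformly in $g$, $\big|\abs{x-g}-a(g)\big|\le\abs x+C_z$ for every real $x$, with $C_z=z_\alpha+\abs{\Phi^{-1}(1-\alpha)}$. Apply the rank inequality to the centered residuals $T_i=\abs{G_{n+i}-g}-a(g)$ themselves, whose $k$-th order statistic is $\qR-a(g)$. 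This gives $M^2=\E\{(\qR-a(g))^2\mid\Fcal_n\}\le C_M(1+\Pi_2)$, which does not involve $g$. The cross term is then handled unconditionally: with $b'=(2W)^{1/2}/f_*$, one has $\E(Mb')\le(\E M^2\,\E b'^2)^{1/2}\le C(\E W)^{1/2}$, which is already of the target order. No term $m^{1/2-\beta}\E g^2$ ever appears.

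\textbf{The untreated event.} Your truncation bound applies Chebyshev at $t=w_*$, which requires $f_*w_*\ge2(d_m+B)$, and you never treat the complementary event. The paper splits on $\{B\le b_*\}$ with $b_*=f_*w_*/4$, choosing $m_0$ so that $d_m\le f_*w_*/4$. On $\{B>b_*\}$ it uses only $\E\{\abs{\qR-a(g)}\mid\Fcal_n\}\le M\le C_M^{1/2}(1+\Pi_2^{1/2})$ together with Markov's bound $\Prob(B>b_*)\le\E B/b_*$. This works precisely because $M$ is free of $g$. With your $M$ you would need $\E[\abs g\ind{B>b_*}]$. Cauchy--Schwarz with Markov bounds this only by about $(\E g^2)^{3/4}\bar\tau^{1/4}$, which exceeds $(m^{1-2\beta}\E g^2)^{1/2}$ once $\E g^2\gg m^{1-2\beta}$.

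\textbf{Why the fallbacks fail.}
\begin{enumerate}
\item The bound $C(1+\E\abs g)$ is at least $C$. It can lie below the right-hand side of the lemma only if $m^{1-2\beta}\E g^2$ is bounded below, that is, $\E g^2\gtrsim m^{2\beta-1}$.
\item The split on $\{\abs g\le1\}$ absorbs $\E g^2$ into $(m^{1-2\beta}\E g^2)^{1/2}$ only when $\E g^2\lesssim m^{1-2\beta}$.
\item A fourth-moment route does not help either. Hypercontractivity does not directly supply a fourth-moment bound here, since indicators of Gaussians do not lie in a finite Wiener chaos. Even granting such a bound, after Cauchy--Schwarz it would leave a term $m^{1-2\beta}\E\abs g^3$, which $\E g^2<\infty$ does not control.
\end{enumerate}
So the whole range $m^{1-2\beta}\ll\E g^2\ll m^{2\beta-1}$ remains open. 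That is exactly the regime the lemma must serve in Proposition~\ref{prop:lower}, where $\E g^2\asymp n^{1-2\beta}$ and $m\gg n$.
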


\begin{proof}
Let $m_0=\max\{8/(f_*w_*),2(1-\alpha)/\alpha\}$, so that $d_m=k/m-(1-\alpha)<2/m\le f_*w_*/4$ and $\min(k,m-k+1)\ge m\min(1-\alpha,\alpha/2)$ for $m\ge m_0$, by the argument given for \eqref{eq:m0}.

\emph{Step 1: a conditional tail bound.} Let $0<t\le w_*$ with $f_*t>B+d_m$. The event $\qR>a(g)+t$ implies $\Hhat(a(g)+t)<k/m\le1-\alpha+d_m$, whereas Lemma~\ref{lem:condmean} and Lemma~\ref{lem:shiftedradius}(ii) give $\E\{\Hhat(a(g)+t)\mid\Fcal_n\}\ge H_g(a(g)+t)-B\ge1-\alpha+f_*t-B$; the event therefore requires a downward deviation of $\Hhat(a(g)+t)$ from its conditional mean by more than $f_*t-B-d_m>0$. The event $\qR<a(g)-t$ implies $\Hhat(a(g)-t)\ge k/m\ge1-\alpha$, whereas the conditional mean is at most $1-\alpha-f_*t+B$, so it requires an upward deviation of at least $f_*t-B\ge f_*t-B-d_m$. Chebyshev's inequality conditionally on $\Fcal_n$ and Lemma~\ref{lem:condvar} give
\begin{equation}\label{eq:condtail}
\Prob\{\abs{\qR-a(g)}>t\mid\Fcal_n\}\le\min\Big\{1,\frac{2W}{(f_*t-B-d_m)^2}\Big\}.
\end{equation}

\emph{Step 2: a conditional second moment.} By Lemma~\ref{lem:shiftedradius}(i), $\abs{g}-a(g)\in[-z_\alpha,-\Phi^{-1}(1-\alpha)]$, so $\big|\abs{x-g}-a(g)\big|\le\abs{x}+C_z$ for every real $x$, with $C_z=z_\alpha+\abs{\Phi^{-1}(1-\alpha)}$. The quantity $\qR-a(g)$ is the $k$-th order statistic of $T_i=\abs{G_{n+i}-g}-a(g)$, and for any real numbers, $T_{(k)}^2\le\sum_iT_i^2/\min(k,m-k+1)$, because at least $m-k+1$ of the $T_i$ are at least $T_{(k)}$ when $T_{(k)}\ge0$ and at least $k$ of them are at most $T_{(k)}$ when $T_{(k)}<0$. Since $\E(G_{n+i}^2\mid\Fcal_n)=s_i^2+\Pi_i^2\le1+\Pi_i^2$,
\begin{equation}\label{eq:condsecond}
M^2:=\E\{(\qR-a(g))^2\mid\Fcal_n\}\le\frac{2\sum_{i\le m}(1+\Pi_i^2+C_z^2)}{m\min(1-\alpha,\alpha/2)}\le C_M(1+\Pi_2),
\end{equation}
with $C_M=(2+2C_z^2)/\min(1-\alpha,\alpha/2)$.

\emph{Step 3: the conditional expectation.} Put $a'=(B+d_m)/f_*$ and $b'=(2W)^{1/2}/f_*$, so that the right-hand side of \eqref{eq:condtail} is $\min\{1,b'^2/(t-a')^2\}$. If $a'<w_*$, integrating \eqref{eq:condtail} over $0<t\le w_*$ as in the proof of Proposition~\ref{prop:quantileform} gives $\E\{\min(\abs{\qR-a(g)},w_*)\mid\Fcal_n\}\le a'+2b'$, and the conditional Cauchy--Schwarz inequality gives $\E\{(\abs{\qR-a(g)}-w_*)_+\mid\Fcal_n\}\le M\min\{1,b'/(w_*-a')\}$. Let $b_*=f_*w_*/4$. On the event $\{B\le b_*\}$ we have $a'\le w_*/2$, hence
\begin{equation}\label{eq:condexp}
\E\{\abs{\qR-a(g)}\mid\Fcal_n\}\le a'+2b'+\frac{2Mb'}{w_*}\qquad\text{on }\{B\le b_*\}.
\end{equation}

\emph{Step 4: the unconditional expectation.} Taking expectations in \eqref{eq:condexp} and using $\E b'\le(2\E W)^{1/2}/f_*$ and $\E(Mb')\le(\E M^2\,\E b'^2)^{1/2}$,
\[
\E\big[\abs{\qR-a(g)}\ind{B\le b_*}\big]\le\frac{\E B+d_m}{f_*}+\frac{2(2\E W)^{1/2}}{f_*}\Big\{1+\frac{(\E M^2)^{1/2}}{w_*}\Big\}.
\]
On the complementary event, $\E\{\abs{\qR-a(g)}\mid\Fcal_n\}\le M\le C_M^{1/2}(1+\Pi_2^{1/2})$ by \eqref{eq:condsecond}, so that, by Markov's inequality and the Cauchy--Schwarz inequality,
\[
\E\big[\abs{\qR-a(g)}\ind{B>b_*}\big]\le C_M^{1/2}\big\{\Prob(B>b_*)+(\bar\tau\Prob(B>b_*))^{1/2}\big\}\le C_M^{1/2}\Big\{\frac{3\E B}{2b_*}+\frac{\bar\tau}2\Big\}.
\]
Now $\E M^2\le2C_M$ because $\bar\tau\le1$, $\E W\le K_2\{r_m(\beta)^2+m^{1-2\beta}(\bar\tau+\E g^2)\}$ by Lemma~\ref{lem:condvar}, $\E B\le K_1\{(\bar\tau\E g^2)^{1/2}+2\bar\tau\}$ by Lemma~\ref{lem:condmean}, $\bar\tau\le C_\tau m^{1-2\beta}$, $d_m\le2/m\le2r_m(\beta)$, and $(m^{1-2\beta}\bar\tau)^{1/2}\le C_\tau^{1/2}m^{1-2\beta}$. Collecting the terms proves the lemma.
\end{proof}

\subsection{Empirical Quantiles of a Long-Memory Gaussian Sample}\label{app:quantilemse}

The bound of Theorem~\ref{cor:rates} involves the center only through its mean squared error. For the sample mean this is the variance of $\Gbar$, which is bounded directly in Appendix~\ref{app:ratesproof}; for the empirical median and for the midpoint of two empirical quantiles it requires the mean squared error of an empirical quantile of the training block, which this subsection bounds by a multiple of $n^{1-2\beta}$ for every $n$. The same bound gives the explicit rate for Assumption~\ref{ass:train} mentioned after that assumption.

\begin{lemma}\label{lem:quantilemse}
Let $\tau\in(0,1)$, $\zeta_\tau=\Phi^{-1}(\tau)$ and $j_n=\lceil n\tau\rceil$. Under \eqref{eq:linearprocess}, $\E(G_{(j_n)}-\zeta_\tau)^2\le C_qn^{1-2\beta}$ for all $n\ge1$, where $G_{(j_n)}$ is the $j_n$-th smallest of $G_1,\ldots,G_n$ and $C_q$ depends only on $\tau$, $\beta$ and $A$.
\end{lemma}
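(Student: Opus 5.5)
We bound the mean squared error of the empirical quantile through the tail of $\abs{G_{(j_n)}-\zeta_\tau}$, which we control with the one-sided empirical distribution function of the training block. Put $\widehat F_n(x)=n^{-1}\sum_{t\le n}\ind{G_t\le x}$. For $u>0$, the event $G_{(j_n)}>\zeta_\tau+u$ implies $\widehat F_n(\zeta_\tau+u)<j_n/n$. Likewise, $G_{(j_n)}<\zeta_\tau-u$ implies $\widehat F_n(\zeta_\tau-u)\ge j_n/n\ge\tau$. Since $0\le j_n/n-\tau<1/n$, both events require a deviation of $\widehat F_n$ from $\Phi$ of at least $\Phi(\zeta_\tau+u)-\tau-1/n$ or $\tau-\Phi(\zeta_\tau-u)$. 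Chebyshev's inequality then bounds each tail by $\Var\widehat F_n(x)$ divided by the square of this gap.

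\emph{Variance step.} By \eqref{eq:covexpansion}, for every $x$ the covariance at lag $h$ of the indicators is at most $\abs{\rho(h)}/4$, because $\abs{\varrho^k}\le\abs{\varrho}$. With \eqref{eq:rhobound} and \eqref{eq:sums} this gives $\Var\widehat F_n(x)\le C_\rho n^{1-2\beta}/(2-2\beta)$ uniformly in $x$. This is the one-sided rate of Lemma~\ref{lem:gaussiansigma}, and we need only the upper bound. For $0<u\le1$ the gaps are at least $c_\tau u$ with $c_\tau=\min\{\phi(u'):\abs{u'-\zeta_\tau}\le1\}$. Hence for $u\ge2/(c_\tau n)$ we get $\Prob(\abs{G_{(j_n)}-\zeta_\tau}>u)\le Cn^{1-2\beta}/u^2$. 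Integrating $2u\,\Prob(\cdot>u)$ against this bound gives only a logarithmic factor, $n^{1-2\beta}\log n$, so we need a sharper route near zero.

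\emph{Main obstacle and fix.} To get the exact $n^{1-2\beta}$ without the logarithm, we split at $u_n=n^{1/2-\beta}$. On $[0,u_n]$ we bound the integrand trivially by $2u$, which contributes $u_n^2=n^{1-2\beta}$. On $[u_n,1]$ the tail bound yields $\int_{u_n}^1 2u\cdot Cn^{1-2\beta}u^{-2}\,du=Cn^{1-2\beta}\log(1/u_n)$, which still carries a logarithm. So Chebyshev with the second moment is too weak, and we need higher moments of $\widehat F_n(x)-\Phi(x)$. We would use the Hermite decomposition: write $\widehat F_n(x)-\Phi(x)=-\phi(x)\Gbar+R_n(x)$, where $R_n$ collects the Hermite terms of order at least two. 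By \eqref{eq:covexpansion}, $\Var R_n(x)\le Cn^{-1}\sum_{h<n}\rho(h)^2\le Cr_n(\beta)^2$, which is of smaller order than $n^{1-2\beta}$. The event $G_{(j_n)}-\zeta_\tau>u$ then requires either $\phi(\zeta_\tau+u)\abs{\Gbar}\gtrsim c_\tau u$ or $\abs{R_n}\gtrsim c_\tau u$. The first has Gaussian tails, $\exp\{-cu^2/v_n\}$ with $v_n\le Cn^{1-2\beta}$, and integrates to $Cv_n$ with no logarithm. The second, by Chebyshev, contributes $Cr_n(\beta)^2\log n\le Cn^{1-2\beta}$, because $r_n(\beta)^2\log n=o(n^{1-2\beta})$ in all three regimes of \eqref{eq:rm}, with the constants absorbed uniformly in $n$.

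\emph{Large deviations.} For $u>1$ we still need a bound on $\E[(G_{(j_n)}-\zeta_\tau)^2\ind{\abs{\cdot}>1}]$. We would use the order-statistic bound of Proposition~\ref{prop:quantileform}, $(G_{(j_n)}-\zeta_\tau)^2\le\sum_t(G_t-\zeta_\tau)^2/\min(j_n,n-j_n+1)$, so that the fourth moment of $G_{(j_n)}-\zeta_\tau$ is bounded by a constant depending on $\tau$. By Cauchy--Schwarz, this contribution is at most $C\,\Prob(\abs{G_{(j_n)}-\zeta_\tau}>1)^{1/2}$. We would then bound that probability by $\Prob(\abs{\Gbar}\gtrsim1)+\Prob(\abs{R_n}\gtrsim1)$ raised to a higher power, using Gaussian H\"older/hypercontractivity on the finite Hermite chaos of $R_n$ to obtain a fourth-moment Chebyshev bound $Cn^{2(1-2\beta)}$. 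Its square root is $Cn^{1-2\beta}$. This hypercontractivity step for the infinite-chaos remainder $R_n$ is the delicate point. If it fails, we would instead truncate $R_n$ at a fixed chaos order and treat the rest by $L^2$.
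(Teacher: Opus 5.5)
\paragraph{Where your route differs from the paper's.}
Your argument on the range $u\le1$ is correct, and it takes a different route from the paper's. You split $\widehat F_n(x)-\Phi(x)=-\phi(x)\Gbar+R_n(x)$. You then use the exact Gaussian tail of $\Gbar$, whose variance is $v_n\le Cn^{1-2\beta}$, and Chebyshev for the Hermite-rank-two remainder, using $\Var R_n(x)\le Cr_n(\beta)^2$. This gives $Cv_n+Cr_n(\beta)^2\log(n+1)\le Cn^{1-2\beta}$ uniformly in $n$.

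The paper instead proves a sub-Gaussian bound for the empirical distribution function itself, \eqref{eq:trainingecdf}. It gets this from the Gaussian H\"older inequality of \citet{chen2015improved}, with all exponents equal to the largest eigenvalue $\lambda_n\le C_\lambda n^{2-2\beta}$ of the correlation matrix, together with Hoeffding's lemma. The tail of $\abs{G_{(j_n)}-\zeta_\tau}$ is then sub-Gaussian at scale $(\lambda_n/n)^{1/2}$. No logarithm ever appears, and $\{\abs{G_{(j_n)}-\zeta_\tau}>1\}$ is exponentially rare, so Cauchy--Schwarz with the fourth moment finishes.

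Your route needs only the covariance expansion \eqref{eq:covexpansion}, and it shows that the first Hermite term carries the rate. But it gives only polynomial tails for $R_n$, and that is where your proof is incomplete.

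\paragraph{The gap: the large-deviation step.}
As written, this step does not close. $R_n$ has components in every Wiener chaos of order at least two, because an indicator has infinitely many nonzero Hermite coefficients. The hypercontractive bound $\norm{F}_4\le3^{k/2}\norm{F}_2$ for the $k$-th chaos degrades with $k$. So it gives no bound $\E R_n^4\le C(\E R_n^2)^2$; that would need diagram-formula moment bounds, which you do not supply.

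Your fallback also fails for $\beta>3/4$. The chaos discarded above a fixed order $K$ has variance of order $1/n$: its diagonal terms alone contribute $n^{-1}\sum_{k>K}\eta_k(x)^2/k!$. Chebyshev then gives a probability of order $1/n$, and Cauchy--Schwarz a contribution of order $n^{-1/2}$. That exceeds $n^{1-2\beta}$ when $\beta>3/4$.

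\paragraph{How to repair it.}
The missing idea is that you need not push $\Prob(\abs{X}>1)$, with $X=G_{(j_n)}-\zeta_\tau$, down to $n^{2(1-2\beta)}$. Use H\"older instead of Cauchy--Schwarz:
$\E[X^2\ind{\abs X>1}]\le\norm{X}_{2p}^2\,\Prob(\abs X>1)^{1-1/p}$.
The moment is controlled by the rank inequality you already invoke, applied to $2p$-th powers: $\E\abs X^{2p}\le\E\abs{G_1-\zeta_\tau}^{2p}/\min(\tau,1-\tau)$.

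Plain Chebyshev on $R_n$, with no truncation, gives $\Prob(\abs X>1)\le2\exp(-c/v_n)+Cr_n(\beta)^2$ for $n\ge2/c_\tau$. Then $r_n(\beta)^2\le Cn^{(1-2\beta)p/(p-1)}$ holds with:
\begin{itemize}
\item $p=2$ if $\beta<3/4$, which is your own Cauchy--Schwarz step;
\item $p=3$ if $\beta=3/4$;
\item $p=1/(2-2\beta)$ if $\beta>3/4$.
\end{itemize}
This yields $Cn^{1-2\beta}$ with a constant depending only on $\tau$, $\beta$ and $A$, without hypercontractivity. The finitely many $n<2/c_\tau$ follow from the moment bound, as in the paper.
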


\begin{proof}
Let $\Gamma_n$ be the correlation matrix of $(G_1,\ldots,G_n)$ and $\lambda_n$ its largest eigenvalue, so that $1\le\lambda_n\le\max_i\sum_j\abs{\rho(i-j)}\le1+2C_\rho n^{2-2\beta}/(2-2\beta)\le C_\lambda n^{2-2\beta}$ by \eqref{eq:rhobound}--\eqref{eq:sums}, with $C_\lambda=1+C_\rho/(1-\beta)$. Fix $u\in\R$ and $t>0$, and let $f_i(x)=\exp[t\{\ind{x\le u}-\Phi(u)\}]$. The Gaussian H\"older inequality of \citet[Theorem~1]{chen2015improved} states that for a centered Gaussian vector with covariance matrix $T$, nonnegative measurable functions $f_i$ and exponents $p_i>0$ such that $T\preceq\mathrm{diag}(p_1T_{11},\ldots,p_nT_{nn})$, one has $\E\prod_if_i(X_i)\le\prod_i\{\E f_i(X_i)^{p_i}\}^{1/p_i}$; for two standard Gaussian variables with correlation $\varrho$ and $p_1=p_2=p$ the condition reads $(p-1)^2\ge\varrho^2$, which is Nelson's hypercontractivity condition. Since $\Gamma_n$ has unit diagonal and $\Gamma_n\preceq\lambda_nI$, the condition holds with all exponents equal to $\lambda_n$, and the inequality gives
\[
\E\prod_{i=1}^nf_i(G_i)\le\prod_{i=1}^n\big\{\E f_i(G_i)^{\lambda_n}\big\}^{1/\lambda_n}\le\exp\Big(\frac{n\lambda_nt^2}8\Big),
\]
where the last step is Hoeffding's lemma for the centered variable $\ind{G_i\le u}-\Phi(u)$, whose range has length one. With $F_n(u)=n^{-1}\sum_{i\le n}\ind{G_i\le u}$, the Chernoff bound with $t=4x/\lambda_n$, applied also to $-\{\ind{x\le u}-\Phi(u)\}$, yields
\begin{equation}\label{eq:trainingecdf}
\Prob\{\abs{F_n(u)-\Phi(u)}\ge x\}\le2\exp(-2nx^2/\lambda_n),\qquad x>0.
\end{equation}
Let $f_\tau=\min\{\phi(v):\abs{v-\zeta_\tau}\le1\}$, so that $\Phi(\zeta_\tau+t)-\tau\ge f_\tau t$ and $\tau-\Phi(\zeta_\tau-t)\ge f_\tau t$ for $0\le t\le1$, and note that $\tau\le j_n/n\le\tau+1/n$. For $2/(nf_\tau)\le t\le1$, the event $G_{(j_n)}>\zeta_\tau+t$ implies $F_n(\zeta_\tau+t)<j_n/n$, hence $F_n(\zeta_\tau+t)-\Phi(\zeta_\tau+t)<1/n-f_\tau t\le-f_\tau t/2$, and the event $G_{(j_n)}<\zeta_\tau-t$ implies $F_n(\zeta_\tau-t)\ge\tau$, hence $F_n(\zeta_\tau-t)-\Phi(\zeta_\tau-t)\ge f_\tau t$; by \eqref{eq:trainingecdf},
\[
\Prob(\abs{G_{(j_n)}-\zeta_\tau}>t)\le4\exp\Big(-\frac{nf_\tau^2t^2}{2\lambda_n}\Big),\qquad \frac{2}{nf_\tau}\le t\le1.
\]
Integrating $2t$ times the tail probability, the interval $t<2/(nf_\tau)$ contributes at most $4/(nf_\tau)^2$, the interval up to $t=1$ contributes at most $\int_0^\infty8t\exp\{-nf_\tau^2t^2/(2\lambda_n)\}\,dt=8\lambda_n/(nf_\tau^2)$, and the remainder is $\E[(G_{(j_n)}-\zeta_\tau)^2\ind{\abs{G_{(j_n)}-\zeta_\tau}>1}]\le\{\E(G_{(j_n)}-\zeta_\tau)^4\}^{1/2}\,2\exp\{-nf_\tau^2/(4\lambda_n)\}$. The rank inequality of Step 2 in the proof of Lemma~\ref{lem:finitecal}, applied to fourth powers, gives $G_{(j_n)}^4\le\sum_iG_i^4/\min(j_n,n-j_n+1)\le\sum_iG_i^4/\{n\min(\tau,1-\tau)\}$, so $\E(G_{(j_n)}-\zeta_\tau)^4\le8\{3/\min(\tau,1-\tau)+\zeta_\tau^4\}$. Since $\lambda_n/n\le C_\lambda n^{1-2\beta}$, $1/n^2\le n^{1-2\beta}$ and $\exp(-cn^{2\beta-1})\le n^{1-2\beta}/(ce)$ for $c>0$, all three contributions are at most a constant multiple of $n^{1-2\beta}$; for the finitely many $n<2/f_\tau$ the bound follows from the fourth-moment bound by enlarging the constant.
\end{proof}

\subsection{Proof of Theorem \ref{cor:rates}}\label{app:ratesproof}

\begin{proof}[Proof of Theorem \ref{cor:rates}]
\emph{Step 1: any center.} Write $g=\muhat-\mu$, which is $\Fcal_n$-measurable, and assume $\E g^2<\infty$, since otherwise there is nothing to prove. The calibrated radius $\qR$ is the $k$-th smallest of $\abs{Y_{n+i}-\muhat}=\abs{G_{n+i}-g}$, so Lemma~\ref{lem:finitecal} applies, and since $m^{1-2\beta}\le r_m(\beta)$ it gives $\E\abs{\qR-a(g)}\le2K_3\{r_m(\beta)+(m^{1-2\beta}\E g^2)^{1/2}\}$. Lemma~\ref{lem:shiftedradius}(iii) gives $0\le a(g)-z_\alpha\le C_\alpha g^2$, and \eqref{eq:gaussdecomp} gives
\begin{align*}
\E\abs{\len\ChatM-2z_\alpha}=2\E\abs{\qR-z_\alpha}&\le2\E\abs{\qR-a(g)}+2\E\{a(g)-z_\alpha\}\\
&\le4K_3\big\{r_m(\beta)+(m^{1-2\beta}\E g^2)^{1/2}\big\}+2C_\alpha\E g^2.
\end{align*}
Finally $(m^{1-2\beta}\E g^2)^{1/2}\le\{m^{1-2\beta}+\E g^2\}/2\le\{r_m(\beta)+\E g^2\}/2$, which proves \eqref{eq:secondorder}. The center enters the concentration of $\qR$ around $a(g)$ only through $m^{1-2\beta}\E g^2$: a shift $g$ breaks the evenness of the indicator $\ind{\abs{G_{n+i}-g}\le w}$, whose first Hermite coefficient is of order $g$ and carries the long-memory variance $m^{1-2\beta}$ (Lemma~\ref{lem:condvar}). The center is never treated as independent of the calibration block: the argument conditions on the training innovations and retains the conditional means $\Pi_i-g$ of the calibration residuals, whose interaction with the center appears in the term $(m^{1-2\beta}\E g^2)^{1/2}$.

\emph{Step 2: the three centers.} The sample mean is a function of $Y_1,\ldots,Y_n$ with $\muhat-\mu=\Gbar$, and
\[
v_n=\Var(\Gbar)=\frac1{n^2}\sum_{s,t=1}^n\rho(s-t)\le\frac1n\Big\{1+2\sum_{h=1}^{n-1}\abs{\rho(h)}\Big\}\le\frac{2C_\rho}{n}\,\frac{n^{2-2\beta}}{2-2\beta}=\frac{C_\rho}{1-\beta}\,n^{1-2\beta}
\]
by \eqref{eq:rhobound}--\eqref{eq:sums} and $C_\rho\ge1$. The empirical median is the $\lceil n/2\rceil$-th smallest of $Y_1,\ldots,Y_n$, so that $\muhat-\mu=G_{(\lceil n/2\rceil)}$, and Lemma~\ref{lem:quantilemse} with $\tau=1/2$ gives $\E(\muhat-\mu)^2\le C_qn^{1-2\beta}$. For the midpoint $\widehat c=(\lhat+\uhat)/2$ of $\lhat=Y_{(\lceil n\alpha/2\rceil)}=\mu+G_{(\lceil n\alpha/2\rceil)}$ and $\uhat=Y_{(\lceil n(1-\alpha/2)\rceil)}=\mu+G_{(\lceil n(1-\alpha/2)\rceil)}$, since $\zeta_{\alpha/2}+\zeta_{1-\alpha/2}=0$ for the normal quantiles $\zeta_\tau=\Phi^{-1}(\tau)$,
\[
\widehat c-\mu=\tfrac12\big\{\big(G_{(\lceil n\alpha/2\rceil)}-\zeta_{\alpha/2}\big)+\big(G_{(\lceil n(1-\alpha/2)\rceil)}-\zeta_{1-\alpha/2}\big)\big\},
\]
and Lemma~\ref{lem:quantilemse} at the two levels gives $\E(\widehat c-\mu)^2\le C_qn^{1-2\beta}$, with $C_q$ the larger of the two constants. Step 1 with these three bounds proves the second claim. By the identity of Section~\ref{sec:gaussian}, which holds for any real endpoints, also when $\lhat>\uhat$, the CQR interval with the empirical quantiles as fitted endpoints is the CMR interval with center $\widehat c$. The same lemma gives $\E(\EQ)^2\le2C_qn^{1-2\beta}$ for the training error $\EQ=\max\{\abs{\lhat-\ell_\alpha},\abs{\uhat-u_\alpha}\}$ of these endpoints, which is the explicit rate mentioned after Assumption~\ref{ass:train}.
\end{proof}

The identity used in Step 2 relies on the constant covariate, and the improvement concerns an error of location. With a covariate, a width error that varies with the covariate enters the length at first order: for CQR with $\mu=0$, fitted endpoints $\lhat(x)=-(z_\alpha+ex)$ and $\uhat(x)=z_\alpha+ex$ at a covariate $X_t=\pm1$ independent of $G_t$, the common correction is of order $e^2$ and the length error at each covariate value is $2e+O(e^2)$; a width error common to all covariates cancels in the calibration, by the same identity.

\subsection{Proof of Proposition \ref{prop:lower}}\label{app:lowerproof}

\begin{proof}[Proof of Proposition \ref{prop:lower}]
\emph{Step 1: any center.} Write $g=\muhat-\mu$ and assume $\E g^2<\infty$, since otherwise the right-hand side of \eqref{eq:lowerrisk} is $-\infty$. By \eqref{eq:gaussdecomp}, Lemma~\ref{lem:shiftedradius}(iii) and Lemma~\ref{lem:finitecal},
\begin{align*}
\E\abs{\len\ChatM-2z_\alpha}&=2\E\abs{\qR-z_\alpha}\ge2\E\{a(g)-z_\alpha\}-2\E\abs{\qR-a(g)}\\
&\ge\kappa_\alpha\E\min(g^2,1)-2K_3\big\{r_m(\beta)+m^{1-2\beta}+(m^{1-2\beta}\E g^2)^{1/2}\big\},
\end{align*}
which is \eqref{eq:lowerrisk} with $c_1=\kappa_\alpha$ and $C=4K_3$, since $m^{1-2\beta}\le r_m(\beta)$.

\emph{Step 2: a common Gaussian component of the center.} Let $V=n^{-1/2}\sum_{s=-2n+1}^{-n}\varepsilon_s\sim N(0,1)$, a function of the innovations preceding the training block, and let $b_t=\Cov(G_t,V)=n^{-1/2}\sum_{s=-2n+1}^{-n}a_{t-s}$ for $1\le t\le n$. The indices $t-s$ range over $[t+n,t+2n-1]\subset[n+1,3n-1]$, so for $n\ge j_0$ the lower bound on the coefficients gives
\[
b_t\ge n^{-1/2}\cdot n\cdot c\,(3n)^{-\beta}=h_n,\qquad h_n=c\,3^{-\beta}n^{1/2-\beta}.
\]
Write $W_t=G_t-b_tV$. The vector $(W_1,\ldots,W_n)$ is jointly Gaussian with $V$ and uncorrelated with it, hence independent of $V$. Since $T$ is translation equivariant, $g=T(Y_1,\ldots,Y_n)-\mu=T(G_1,\ldots,G_n)=T(W+bV)$, where $b=(b_1,\ldots,b_n)^\tr$. For fixed $W$, put $f_W(v)=T(W+bv)$; for $h\ge0$, every coordinate of $W+b(v+h)$ exceeds the corresponding coordinate of $W+bv+h_nh\mathbf1$, so monotonicity and equivariance give $f_W(v+h)\ge f_W(v)+h_nh$. If $f_W(0)\ge0$, then $g\ge h_n$ on the event $\{V\ge1\}$; if $f_W(0)<0$, then $g\le f_W(0)-h_n\abs{V}\le-h_n$ on the event $\{V\le-1\}$. Since $V$ is independent of $W$ and $\Prob(V\ge1)=\Prob(V\le-1)=\Phi(-1)$,
\[
\Prob(\abs{g}\ge h_n)\ge\Phi(-1),\qquad \E\min(g^2,1)\ge\Phi(-1)\min(h_n^2,1)=\Phi(-1)c^23^{-2\beta}n^{1-2\beta}
\]
for $n\ge n_0$, where $n_0\ge\max(j_0,2)$ is such that $h_n\le1$. No Gaussianity or unbiasedness of $g$ is used.

\emph{Step 3: a calibration block large relative to the training block.} Let $\nu=2\beta-1\in(0,1)$ and $c_2=\Phi(-1)c^23^{-2\beta}$. By Steps 1 and 2 and $\E g^2\le C_gn^{-\nu}$, for $n\ge n_0$ and $m\ge m_0$,
\[
\E\abs{\len\ChatM-2z_\alpha}\ge c_1c_2n^{-\nu}-C\big\{r_m(\beta)+C_g^{1/2}(nm)^{-\nu/2}\big\}.
\]
Let $m\ge Kn^{\max(1,2\nu)}\log(n+1)$ with $K\ge m_0$, and recall that $n\ge2$, so that $\log(n+1)\ge1$. If $\nu<1/2$, then $m\ge Kn$, so $r_m(\beta)=m^{-\nu}\le K^{-\nu}n^{-\nu}$ and $(nm)^{-\nu/2}\le K^{-\nu/2}n^{-\nu}$. If $\nu=1/2$, then, since $\log(m+1)/m$ is decreasing in $m$ and $Kn\log(n+1)+1\le K(n+1)^2$, $r_m(\beta)^2\le\{\log K+2\log(n+1)\}/\{Kn\log(n+1)\}\le(\log K+2)/(Kn)$, and $(nm)^{-1/4}\le K^{-1/4}n^{-1/2}$. If $\nu>1/2$, then $m\ge Kn^{2\nu}$, so $r_m(\beta)=m^{-1/2}\le K^{-1/2}n^{-\nu}$ and $(nm)^{-\nu/2}\le K^{-\nu/2}n^{-\nu(1+2\nu)/2}\le K^{-\nu/2}n^{-\nu}$, since $2\nu>1$. In each regime $r_m(\beta)+C_g^{1/2}(nm)^{-\nu/2}\le\epsilon_Kn^{-\nu}$ with $\epsilon_K\to0$ as $K\to\infty$, so for $K$ large the right-hand side of the last display is at least $c_1c_2n^{-\nu}/2$ and $r_m(\beta)\le n^{-\nu}$, which gives \eqref{eq:lowerbound} with $c_3=c_1c_2/4$.
\end{proof}

The sample mean, the empirical median and the midpoint of the empirical $\alpha/2$- and $(1-\alpha/2)$-quantiles are nondecreasing in each coordinate and translation equivariant, and Step 2 of the proof of Theorem~\ref{cor:rates} bounds their mean squared error by a multiple of $n^{1-2\beta}$, so they satisfy the conditions of Proposition~\ref{prop:lower} on the center.

\subsection{Coverage under Gaussian Long Memory}\label{app:coverageproof}

This subsection bounds the coverage error of the interval of Theorem~\ref{cor:rates}. The bound has the order of the length bound \eqref{eq:secondorder}, so the interval attains the nominal level at the rate of its length, although its center converges at the slower rate $n^{1/2-\beta}$.

\begin{corollary}[Coverage]\label{cor:coverage}
Assume \eqref{eq:linearprocess}, and let $\muhat$ be as in Theorem~\ref{cor:rates}, with $\E(\muhat-\mu)^2<\infty$. For all $n$ and all $m\ge m_0$, with $m_0$ as in Theorem~\ref{cor:rates},
\begin{equation}\label{eq:learnedcoverage}
\begin{aligned}
\big|\Prob\{Y_{n+m+1}\in\ChatM\}-(1-\alpha)\big|&\le C\big[r_m(\beta)+\{m^{1-2\beta}\E(\muhat-\mu)^2\}^{1/2}+m^{1-2\beta}\big]\\
&\le C\big\{\E(\muhat-\mu)^2+r_m(\beta)\big\}.
\end{aligned}
\end{equation}
For the sample mean, the empirical median and the midpoint of Theorem~\ref{cor:rates}, the coverage error is therefore at most $C\{n^{-(2\beta-1)}+r_m(\beta)\}$.
\end{corollary}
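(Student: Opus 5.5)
I would write the coverage event as a comparison between the test residual and the calibrated radius, and condition on the training innovations as in Appendix~\ref{app:conditional}. Put $g=\muhat-\mu$, which is $\Fcal_n$-measurable. The event $\{Y_{n+m+1}\in\ChatM\}$ is $\{\abs{G_{n+m+1}-g}\le\qR\}$. The radius $\qR$ concentrates around $a(g)$ by Lemma~\ref{lem:finitecal}, and $H_g(a(g))=1-\alpha$ exactly, by definition of $a(g)$. This is the key point. The center shifts the population radius at second order, but the coverage at the shifted radius $a(g)$ is exactly nominal, so the error $a(g)-z_\alpha$ never enters. The split is therefore
\[
\Prob\{\abs{G_{n+m+1}-g}\le\qR\}-(1-\alpha)=\E\big[\Prob\{\abs{G_{n+m+1}-g}\le\qR\mid\cdot\}-\Prob\{\abs{G_{n+m+1}-g}\le a(g)\mid\Fcal_n\}\big]+\E\big[\Prob\{\abs{G_{n+m+1}-g}\le a(g)\mid\Fcal_n\}-H_g(a(g))\big].
\]

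\emph{Second term.} I decompose $G_{n+m+1}=\Pi_{m+1}+U_{m+1}$ as in \eqref{eq:decomposition}. Conditionally on $\Fcal_n$, the inner probability is $\pi_{a(g)}(\Pi_{m+1}-g,s_{m+1})$. The Taylor argument of Lemma~\ref{lem:condmean}, applied to the single index $i=m+1$, bounds it by $K_1(\abs{g}\abs{\Pi_{m+1}}+\Pi_{m+1}^2+\tau_{m+1})$. Since $\E\Pi_{m+1}^2=\tau_{m+1}\le Cm^{1-2\beta}$ and Cauchy--Schwarz applies, this term is at most $C\{(m^{1-2\beta}\E g^2)^{1/2}+m^{1-2\beta}\}$.

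\emph{First term (main obstacle).} The test point is dependent on the calibration block, so $\qR$ and $G_{n+m+1}$ are not independent given $\Fcal_n$. I would condition on $\Fcal_{n+m}$ together with the training randomness instead. Given these, $G_{n+m+1}=V+a_0\varepsilon_{n+m+1}$ with $V$ known, so the conditional law of $\abs{G_{n+m+1}-g}$ has density at most $2\phi(0)/\abs{a_0}$. Both $\qR$ and $g$ are measurable with respect to this sigma-field. The first term is therefore at most $(2\phi(0)/\abs{a_0})\,\E\abs{\qR-a(g)}$. This mirrors the predictive-density argument in the proof of \eqref{eq:coveragepred}, with $a(g)$ in place of the threshold. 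Lemma~\ref{lem:finitecal} then gives $C\{r_m(\beta)+m^{1-2\beta}+(m^{1-2\beta}\E g^2)^{1/2}\}$.

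The first-term argument has a subtlety. The comparison event uses the same variable $G_{n+m+1}$ inside the conditional expectation over $\Fcal_{n+m}$, whereas the second term conditions only on $\Fcal_n$. This is consistent because the tower property lets the difference be written as $\E[\ind{\abs{G-g}\le\qR}-\ind{\abs{G-g}\le a(g)}]$, and bounding $\abs{\cdot}$ by the probability that $\abs{G-g}$ lies between $\qR$ and $a(g)$ uses only the $\Fcal_{n+m}$-conditional density. Summing the two terms gives the first line of \eqref{eq:learnedcoverage}.

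For the second line, I use $m^{1-2\beta}\le r_m(\beta)$ and $(m^{1-2\beta}\E g^2)^{1/2}\le\{r_m(\beta)+\E g^2\}/2$. The statement for the three centers then follows from the bounds $\E g^2\le Cn^{1-2\beta}$ in Step~2 of the proof of Theorem~\ref{cor:rates}.
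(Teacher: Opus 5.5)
Your proposal is correct and matches the paper's proof essentially step for step. The paper also compares the thresholds $\qR$ and $a(g)$ through the conditional density bound $2\phi(0)/\abs{a_0}$ of the test residual given $\Fcal_{n+m}$ together with Lemma~\ref{lem:finitecal}, then bounds the gap between $\pi_{a(g)}(\Pi_{m+1}-g,s_{m+1})$ and $H_g(a(g))=1-\alpha$ by the single-summand bound of Lemma~\ref{lem:condmean} at $i=m+1$ with Cauchy--Schwarz, and derives the second line and the three-center statement exactly as you describe.
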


The middle term of \eqref{eq:learnedcoverage} is the interaction between the error of the center and the part of the test observation that the training innovations predict, to which a shifted center is sensitive at first order.

\begin{proof}[Proof of Corollary~\ref{cor:coverage}]
Write $g=\muhat-\mu$. The calibrated radius $\qR$ is the $k$-th smallest of $\abs{G_{n+i}-g}$, $1\le i\le m$, and $Y_{n+m+1}\in\ChatM$ exactly when $\abs{G_{n+m+1}-g}\le\qR$. Let $\Fcal_{n+m}$ denote the sigma-field generated by the innovations up to time $n+m$ and by the randomness of the training algorithm. Conditionally on $\Fcal_{n+m}$, $G_{n+m+1}=a_0\varepsilon_{n+m+1}+\Pi'$ with $\Pi'=\sum_{j\ge1}a_j\varepsilon_{n+m+1-j}$ measurable with respect to $\Fcal_{n+m}$, so $G_{n+m+1}$ is Gaussian with variance $a_0^2$, whereas $g$, $a(g)$ and $\qR$ are $\Fcal_{n+m}$-measurable. Hence the conditional probability $\Prob(\abs{G_{n+m+1}-g}\le q\mid\Fcal_{n+m})=\Phi\{(q+g-\Pi')/\abs{a_0}\}-\Phi\{(-q+g-\Pi')/\abs{a_0}\}$ is Lipschitz in $q$ with constant $2\phi(0)/\abs{a_0}$, and
\[
\big|\Prob\{\abs{G_{n+m+1}-g}\le\qR\}-\Prob\{\abs{G_{n+m+1}-g}\le a(g)\}\big|\le\frac{2\phi(0)}{\abs{a_0}}\,\E\abs{\qR-a(g)}.
\]
For the second probability, condition on $\Fcal_n$ and use the decomposition \eqref{eq:decomposition} at $i=m+1$: $\Prob\{\abs{G_{n+m+1}-g}\le a(g)\mid\Fcal_n\}=\pi_{a(g)}(\Pi_{m+1}-g,s_{m+1})$, whereas $\pi_{a(g)}(-g,1)=H_g(a(g))=1-\alpha$ by the definition of $a(g)$. The bound on a single summand in the proof of Lemma~\ref{lem:condmean}, at $i=m+1$, and the Cauchy--Schwarz inequality give
\begin{align*}
\big|\Prob\{\abs{G_{n+m+1}-g}\le a(g)\}-(1-\alpha)\big|&\le K_1\E\big(\abs{g\Pi_{m+1}}+\Pi_{m+1}^2+\tau_{m+1}\big)\\
&\le K_1\big\{(\tau_{m+1}\E g^2)^{1/2}+2\tau_{m+1}\big\},
\end{align*}
and $\tau_{m+1}\le A^2m^{1-2\beta}/(2\beta-1)$ by \eqref{eq:sums}. Combining the two displays with Lemma~\ref{lem:finitecal} proves the first inequality in \eqref{eq:learnedcoverage}; the second follows from $m^{1-2\beta}\le r_m(\beta)$ and $2(xy)^{1/2}\le x+y$. For the three centers, Step~2 of the proof of Theorem~\ref{cor:rates} gives $\E g^2\le Cn^{1-2\beta}$.
\end{proof}

\section{Experimental Design and Complete Results}\label{app:experiments}

This appendix specifies the studies of Section~\ref{sec:experiments} and reports their complete results. Appendix~\ref{app:conventions} gives the conventions shared by all studies and the Gaussian linear process, Appendices~\ref{app:study1} and~\ref{app:study2} give Studies~1 and~2, Appendix~\ref{app:extension} extends Study~2 to non-Gaussian innovations, and Appendix~\ref{app:study3} gives Study~3.

\subsection{Conventions and Simulation}\label{app:conventions}

This subsection fixes the conventions of all studies and the simulation of the Gaussian linear process.

All studies use $\alpha=.1$ and the rank $k=\lceil(m+1)(1-\alpha)\rceil$, without interpolation. Each trajectory is one stationary path, whose first $n$ observations form the training block and whose next $m$ observations form the calibration block. A cell, that is, a model with fixed block sizes, consists of $1000$ independent trajectories, each generated from its own random stream, a PCG64 generator seeded through NumPy's SeedSequence with a fixed base seed and the identifiers of the cell and the trajectory. A reported value is the average over the $1000$ trajectories, and its Monte Carlo standard error is the sample standard deviation divided by $1000^{1/2}$. The studies were run in Python~3.12.3 with NumPy~2.5.3 and SciPy~1.18.1.

The Gaussian linear process of Studies~1--3 is \eqref{eq:linearprocess} with the coefficients $a_0=c$ and $a_j=a_{j-1}(1-\beta/j)$ for $j\ge1$, where the constant $c>0$ makes $\sum_ja_j^2=1$. These coefficients are positive and asymptotically proportional to $j^{-\beta}$, so that \eqref{eq:linearprocess} holds, and so does the coefficient lower bound of Proposition~\ref{prop:lower}. The autocorrelation $\rho(h)=\sum_{j\ge0}a_ja_{j+h}$ satisfies $\rho(0)=1$ and $\rho(h)=\rho(h-1)(h-\beta)/(h-1+\beta)$ for $h\ge1$, and it is asymptotically proportional to $h^{1-2\beta}$; the variance of the mean of $n$ consecutive observations, $v_n=n^{-2}\{n+2\sum_{j=1}^{n-1}(n-j)\rho(j)\}$, is computed exactly. A path of length $N$ is generated exactly by circulant embedding of its covariance matrix, with an embedding whose size is the smallest power of two that is at least $2(N-1)$, so that the path has the stationary law, without truncation of \eqref{eq:linearprocess} and without burn-in.

\subsection{Study 1: Four Dependence Setups}\label{app:study1}

This subsection specifies Study~1 and reports its complete results for CQR and CMR.

For every integer $t$, let $A_t$, $\xi_t$ and $B_t$ be independent, with $A_t$ uniform on $[-1,1]$, $\xi_t$ standard normal and $B_t$ Bernoulli$(1/2)$, and \iid\ over $t$. The covariate is $X_t=A_t$ and the response is $Y_t=f(X_t)+s(X_t)h(U_t)$, with $f$, $s$ and $h$ as in Section~\ref{sec:experiments}, where $U_t$ is uniform on $[0,1]$ in each of the four setups:
\begin{align*}
&\text{(1) independent: } U_t=\Phi(\xi_t);\qquad
\text{(2) AR(1): } U_t=\Phi(H_t),\quad H_t=.9H_{t-1}+.19^{1/2}\xi_t;\\
&\text{(3) Bernoulli: } U_t=(U_{t-1}+B_t)/2;\qquad
\text{(4) long-memory linear process: } U_t=\Phi(G_t),
\end{align*}
where $G_t=\sum_{j\ge0}a_j\xi_{t-j}$ in (4), with the coefficients of Appendix~\ref{app:conventions} and $\beta=.8$. The process in (3) is $W_t/2$ for the Bernoulli autoregression $W_t$ of Section~\ref{sec:main} and is simulated exactly on a $128$-bit binary state; (2) and (3) start from their stationary laws. Setups (3) and (4) are not strongly mixing (Appendices~\ref{app:examples} and~\ref{app:switch}), and neither is $(X_t,Y_t)$, of which $U_t$ is a function.

The learner is linear quantile regression on the basis $(1,x,x^2)$ over the coefficient box $[-5,5]^3$, solved exactly as a linear program, at the levels $.05$ and $.95$ for CQR and $.5$ for CMR. Since $h$ is increasing, the conditional $\tau$-quantile of $Y_t$ given $X_t=x$ is $f(x)+s(x)h(\tau)$, so the learner is correctly specified; the oracle CQR interval has the endpoints $f(x)+s(x)h(.05)$ and $f(x)+s(x)h(.95)$, and the oracle CMR interval is $\mu(x)\pm q_\alpha$, with $\mu(x)=f(x)+s(x)h(.5)$ and $q_\alpha=.752$ the $(1-\alpha)$-quantile of $\abs{Y_t-\mu(X_t)}$. Assumptions~\ref{ass:train}--\ref{ass:fdm} hold in all four setups: Assumption~\ref{ass:train} by Lemma~\ref{lem:ergodicqr}, since every setup is ergodic and the learner is correctly specified; Assumption~\ref{ass:local} because the conditional density of $Y_t$ given $X_t$ lies between $1/2.6$ and $1$ on its support (Appendix~\ref{app:margin}); and Assumption~\ref{ass:fdm} by Proposition~\ref{prop:sufficient}, through part (a) with every $\gamma>1/2$ in setups (1)--(3), whose scores have geometrically decaying functional dependence, and through part (b) with $\gamma=\beta=.8$ in setup (4). Hence $\rho_m=m^{-1/2}$ in setups (1)--(3) and $\rho_m=m^{-.3}$ in setup (4).

The marginal coverage is estimated by averaging over the trajectories the conditional coverage probability of the calibrated interval given the observed past, computed from the known conditional law of $U_{n+m+1}$; this has a smaller Monte Carlo variance than averaging coverage indicators. The length error is $\E\sup_x\abs{\len\ChatQ(x)-\len\CQ(x)}$ for CQR and $2\E\abs{\qR-q_\alpha}$ for CMR, and the cells are $n=m\in\{128,\ldots,4096\}$ for each setup.

\begin{figure}[t]
\begin{center}
\includegraphics[width=.9\linewidth]{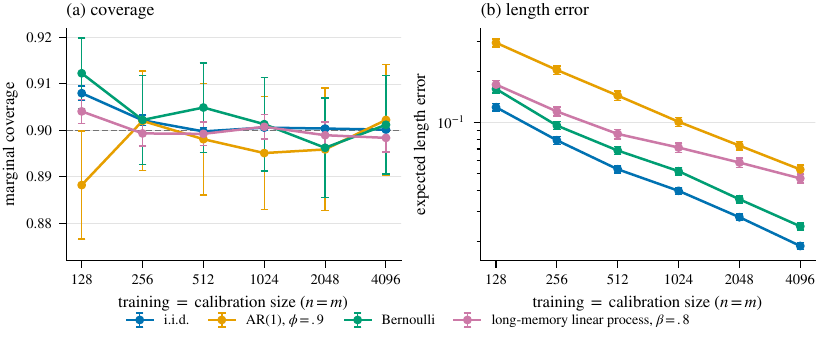}
\end{center}
\caption{Study~1, CMR with quadratic median regression, on the trajectories of Figure~\ref{fig:g1}. (a) Marginal coverage; the dashed line is the nominal level. (b) Length error $2\E\abs{\qR-q_\alpha}$, on a logarithmic scale. Both panels show pointwise 95\% Monte Carlo intervals.}
\label{fig:g1cmr}
\end{figure}

Figure~\ref{fig:g1} shows the CQR results and Figure~\ref{fig:g1cmr} the CMR results. The coverage of both procedures lies between $.888$ and $.912$ in all cells; in setup (1), where the scores are exchangeable, it agrees with the exact value $k/(m+1)$, for example $117/129=.907$ at $m=128$. The length errors decrease in all four setups. Least-squares fits of the logarithm of the length error on $\log n$ over the four largest sizes give the decay exponents $.53$, $.53$, $.51$ and $.51$ for CQR and $.50$, $.48$, $.50$ and $.29$ for CMR, in the order of the setups. In setup (4), over the tested sizes, the length error of CMR decreases like $m^{-.3}$, the rate $\rho_m$ of Theorem~\ref{thm:cmr}, whereas that of CQR decreases like $m^{-1/2}$, as in the other setups.

The oracle calibration errors show the same contrast. On the calibration blocks of setup (4), these errors, $\E\abs{\SQ_{(k)}}$ and $\E\abs{\SM_{(k)}-q_\alpha}$, which use the oracle scores and involve no training, decrease with the exponents $.50$ and $.30$. At its population threshold, the CQR oracle indicator $\ind{\SQ_t\le0}=\ind{\abs{G_t}\le z_\alpha}$ is an even function of $G_t$, like the indicators of Section~\ref{sec:gaussian}, so its first Hermite coefficient vanishes and its covariances decay like $\rho(h)^2$ (Lemma~\ref{lem:evencov}), which leads to the rate $m^{-1/2}$ at $\beta=.8$. The CMR indicator is not even in $G_t$, because $h$ is not linear, and its nonzero first Hermite coefficient leads to the rate $m^{1/2-\beta}=m^{-.3}$. This explains the oracle calibration rates; the fitted CQR length can also retain a first-order training-error term, so the observed slope does not establish its asymptotic rate.

\subsection{Study 2: The Sharper Length Rate}\label{app:study2}

This subsection specifies Study~2 and the three numbers of Table~\ref{tab:rates}.

The observations are $Y_t=\mu+G_t$, with $\mu=0$, which is no loss of generality because the procedure is translation equivariant, and with $G_t$ the Gaussian linear process of Appendix~\ref{app:conventions} for $\beta\in\{.6,.75,.9\}$. On each path of length $n+m$, the center is the training sample mean $\muhat=\bar Y_n$, the calibrated radius $\qR$ is the $k$-th smallest of $\abs{Y_{n+i}-\muhat}$, and the known-center radius $\qRor$ is the $k$-th smallest of $\abs{Y_{n+i}-\mu}$. The cells are $n=m\in\{128,\ldots,4096\}$. Figure~\ref{fig:l1} shows the three quantities of the study: the mean squared error $\E(\muhat-\mu)^2$ of the center, which equals $v_n$; the oracle calibration error $\E\abs{\qRor-z_\alpha}$; and the length error $\E\abs{\len\ChatM-2z_\alpha}=2\E\abs{\qR-z_\alpha}$.

The three numbers of Table~\ref{tab:rates} summarize the rates of these quantities, each computed over the five largest sizes $n=m\in\{256,\ldots,4096\}$. The estimation error exponent checks the rate of the center: it is minus the least-squares slope, with an intercept, of the logarithm of the mean squared error on $\log n$, to be compared with $2\beta-1$. The oracle calibration error ratio checks whether the oracle calibration error decreases at the rate $r_m(\beta)$ of \eqref{eq:rm}: it is the least-squares slope of the logarithm of this error on $\log r_m(\beta)$, which equals the ratio of the decay exponent of the error to that of $r_m(\beta)$ when both are powers of $m$; the value one means that the error decreases at the rate $r_m(\beta)$, and a larger value that it decreases faster. The total length error ratio is the same number for the length error, on $\log r_n(\beta)$, and checks whether the length error decreases at the rate of Theorem~\ref{cor:rates}. The Monte Carlo standard error of each number is its standard deviation over $2000$ bootstrap replications that resample the $1000$ trajectories of each cell with replacement, keeping the quantities of a trajectory together.

\begin{table}[t]
\caption{Study~2 and its extension, rates over $n=m\in\{256,\ldots,4096\}$, with Monte Carlo standard errors in parentheses; the reference is $2\beta-1$ for the estimation error exponent and one for the two ratios. The center is the training mean for Gaussian innovations and the training median for the non-Gaussian innovations of Appendix~\ref{app:extension}.}
\label{tab:rates}
\begin{center}
\footnotesize
\begin{tabular}{lccccc}
\toprule
Innovations & $\beta$ & $2\beta-1$ & \begin{tabular}[b]{@{}c@{}}Estimation error\\ exponent\end{tabular} & \begin{tabular}[b]{@{}c@{}}Oracle calibration\\ error ratio\end{tabular} & \begin{tabular}[b]{@{}c@{}}Total length\\ error ratio\end{tabular} \\
\midrule
Gaussian & $.60$ & $.20$ & .211 (.021) & 1.029 (.055) & .986 (.046) \\
 & $.75$ & $.50$ & .544 (.019) & 1.118 (.028) & 1.110 (.028) \\
 & $.90$ & $.80$ & .806 (.021) & 1.056 (.022) & 1.080 (.022) \\
\addlinespace[2pt]
Laplace & $.60$ & $.20$ & .202 (.021) & .905 (.055) & 1.043 (.045) \\
 & $.75$ & $.50$ & .459 (.020) & 1.095 (.026) & 1.106 (.027) \\
 & $.90$ & $.80$ & .829 (.020) & 1.054 (.022) & 1.057 (.023) \\
\addlinespace[2pt]
centered exponential & $.60$ & $.20$ & .206 (.019) & .973 (.056) & 1.026 (.044) \\
 & $.75$ & $.50$ & .487 (.021) & .720 (.026) & .686 (.026) \\
 & $.90$ & $.80$ & .802 (.020) & .891 (.022) & .876 (.022) \\
\bottomrule

\end{tabular}
\end{center}
\end{table}

The Gaussian rows of Table~\ref{tab:rates} report the three numbers. The estimated mean squared errors agree with the exact $v_n$ within $1.7$ Monte Carlo standard errors in all $18$ cells, and the estimation error exponents $.211$, $.544$ and $.806$ are close to $2\beta-1$: the center converges slowly. The two ratios are close to one. At $\beta=.6$, they correspond to decay exponents of about $.21$ and $.20$, against $.1$ for the rate $m^{1/2-\beta}$ of Theorem~\ref{thm:cmr}, and at $\beta=.9$ to about $.53$ and $.54$, against $.5$ for $r_m(\beta)$ and $.4$ for $m^{1/2-\beta}$. At $\beta=.75$ the ratios $1.118$ and $1.110$ exceed one: over this range the errors decrease faster than $\{\log(m+1)/m\}^{1/2}$. The oracle calibration error and the length error therefore follow the rates of Theorem~\ref{cor:rates} and decrease faster than the center converges.

\subsection{Extension of Study 2: Non-Gaussian Innovations}\label{app:extension}

Section~\ref{sec:gaussian} proves the sharper rate for Gaussian innovations and attributes it to the symmetry of their law. This subsection repeats Study~2 with two non-Gaussian innovation laws and the same coefficients. The Laplace law is symmetric, and tests whether the improvement extends beyond Gaussian innovations, as Section~\ref{sec:gaussian} expects. The centered exponential law is asymmetric by design, and tests whether the improvement rests on symmetry, in which case the length error should approach the rate of Theorem~\ref{thm:cmr}.

The observations are $Y_t=G_t=\sum_{j\ge0}a_j\varepsilon_{t-j}$, with the coefficients of Appendix~\ref{app:conventions}, $\beta\in\{.6,.75,.9\}$ and \iid\ innovations with mean zero and variance one: Laplace with scale $2^{-1/2}$, or $\mathrm{Exp}(1)-1$, which has skewness $2$. The center is the empirical median of the training block, which estimates the center $c$ of the CMR oracle interval $c\pm q$, the median of $G_t$; the radius $q$ is the $(1-\alpha)$-quantile of $\abs{G_t-c}$, computed by numerical inversion of the characteristic function of $G_t$ with an error below $2\times10^{-6}$. The cells are $n=m\in\{128,\ldots,4096\}$, and the quantities and the three numbers are those of Study~2, with the known center $c$ in the oracle calibration error. Paths with non-Gaussian innovations cannot be generated by circulant embedding. For a path of length $N$ and $L$ the smallest power of two that is at least $\max(2^{20},64N)$, we compute $G_t=H_t+R_t$ for $1\le t\le N$, where $H_t=\sum_{s=1-L}^{t}a_{t-s}\varepsilon_s$ is obtained exactly from $L+N$ simulated innovations by a fast Fourier convolution, and the remote part $R_t=\sum_{s\le-L}a_{t-s}\varepsilon_s$ is replaced by a Gaussian vector with its exact covariance. This keeps the covariance of the path and changes only the higher cumulants of $R_t$, a weighted sum of infinitely many innovations with small weights. With Gaussian innovations, the same algorithm reproduces Study~2 within Monte Carlo error.

\begin{figure}[t]
\begin{center}
\includegraphics[width=.8\linewidth]{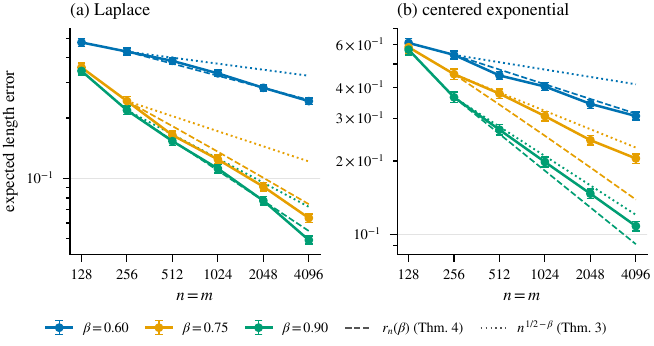}
\end{center}
\caption{Extension of Study~2, CMR with the empirical median as center, for (a) Laplace and (b) centered exponential innovations and $\beta\in\{.6,.75,.9\}$: length error with $n=m$, with the rate $r_n(\beta)$ of Theorem~\ref{cor:rates} (dashed) and the rate $n^{1/2-\beta}$ of Theorem~\ref{thm:cmr} (dotted), matched to the empirical curves at $256$. Pointwise 95\% Monte Carlo intervals.}
\label{fig:ext}
\end{figure}

Figure~\ref{fig:ext} shows the length error, and Table~\ref{tab:rates} gives the three numbers. With Laplace innovations, the findings of Study~2 hold: the estimation error exponents are close to $2\beta-1$, and the oracle calibration and total length error ratios are close to one, between $.905$ and $1.106$. With centered exponential innovations, the total length error ratios at $\beta=.75$ and $.9$ are $.686$ and $.876$, well below one, whereas the same ratios computed with the rate $n^{1/2-\beta}$ of Theorem~\ref{thm:cmr} in place of $r_n(\beta)$ are $1.171$ and $1.095$, close to one: the length error follows the rate of Theorem~\ref{thm:cmr} more closely than that of Theorem~\ref{cor:rates}. At $\beta=.6$, however, the total length error ratio is $1.026$, and $2.051$ with the rate of Theorem~\ref{thm:cmr}.

The exception at $\beta=.6$ has a simple source: long memory makes the law of $G_t$ more symmetric. Under the centered exponential law, the skewness of $G_t$ is $2\sum_{j\ge0}a_j^3$, which equals $.76$, $1.60$ and $1.95$ at $\beta=.6$, $.75$ and $.9$, since $a_0=.70$, $.92$ and $.99$: the stronger the memory, the more the coefficients spread each innovation over many lags, and the closer the law of $G_t$ is to a Gaussian law. The asymmetry enters the length through the first-order term of the population radius. Let $f$ be the density of $G_t$, $c$ its median and $q$ the oracle radius, and let $a(g)$ now denote the $(1-\alpha)$-quantile of $\abs{G_t-c-g}$, so that $a(0)=q$; implicit differentiation gives
\[
a'(0)=\frac{f(c-q)-f(c+q)}{f(c-q)+f(c+q)},
\]
which vanishes for a symmetric law, as in Appendix~\ref{app:shifted}. The densities $f(c-q)$ and $f(c+q)$ at the oracle endpoints are $.078$ and $.106$, $.005$ and $.108$, and below $10^{-4}$ and $.101$ at $\beta=.6$, $.75$ and $.9$, so that $\abs{a'(0)}=.15$, $.91$ and $1.00$. By the reduction principle for the empirical process of long-memory linear processes \citep{wu2003empirical}, the empirical distribution function of the calibration block differs from the distribution function of $G_t$ at $x$, to first order, by $-f(x)\bar G_m$, where $\bar G_m=m^{-1}\sum_{i=1}^mG_{n+i}$. The known-center radius, the $k$-th smallest of $\abs{G_{n+i}-c}$, therefore has the first-order error $-a'(0)\bar G_m$, whose expected absolute value is about $\abs{a'(0)}(2v_m/\pi)^{1/2}$.

At $m=4096$, this first-order part is about $.05$, $.09$ and $.03$ at $\beta=.6$, $.75$ and $.9$, against observed oracle calibration errors of $.16$, $.10$ and $.05$. At $\beta=.75$ and $.9$ it is the larger part of the error, which then decreases at about the rate $m^{1/2-\beta}$. At $\beta=.6$ the error is dominated by the part that Gaussian innovations also produce, about $.14$ at $m=4096$, which decreases like $r_m(\beta)=m^{-.2}$, whereas the first-order part decreases only like $m^{-.1}$ and would dominate only for $m$ of the order of $10^8$. Accordingly, the total length error ratios computed with the rate of Theorem~\ref{thm:cmr}, $2.05$, $1.17$ and $1.09$ at $\beta=.6$, $.75$ and $.9$, decrease as the skewness grows. Hence the more symmetric the law of the series, the closer the length error is to the rate of Theorem~\ref{cor:rates}, and the more skewed, the closer it is to the rate of Theorem~\ref{thm:cmr}. The extension thus supports both the expectation of Section~\ref{sec:gaussian} for symmetric innovations and its dependence on symmetry.

\subsection{Study 3: The Lower Bound}\label{app:study3}

This subsection specifies Study~3 and reports its complete results. The study illustrates the training term of the lower bound \eqref{eq:lowerbound} of Proposition~\ref{prop:lower}: for a fixed training block, the length error remains at least of the order of the mean squared error of the center, however long the calibration block.

The model is that of Study~2 with $\beta=.9$ and the sample mean as center. Proposition~\ref{prop:lower} gives the lower bound when $m\ge Kn^{\max(1,4\beta-2)}\log(n+1)$, which is $m\ge Kn^{1.6}\log(n+1)$ at $\beta=.9$, for a constant $K$ that the proposition does not specify. The cells are therefore $n\in\{64,128,256,512,1024\}$ and $m=2^{\lceil\log_2(Kn^{1.6})\rceil}$ for $K\in\{1,4,16,64\}$, from $1024$ to $4{,}194{,}304$. The grid omits the factor $\log(n+1)$, which varies only by a factor $1.66$ over the tested $n$, less than the factor $4$ between consecutive values of $K$.

The quantity of interest is the length error $\E\abs{\len\ChatM-2z_\alpha}=2\E\abs{\qR-z_\alpha}$. To separate the contribution of the center from that of the calibration, let $g=\Gbar$ be the error of the training mean and $a(g)$ the radius of Appendix~\ref{app:gaussian}, the $(1-\alpha)$-quantile of $\abs{Z-g}$ for $Z\sim N(0,1)$. Taking expectations in \eqref{eq:gaussdecomp} gives
\[
\big|\,2\E\abs{\qR-z_\alpha}-2\E\{a(g)-z_\alpha\}\big|\le2\E\abs{\qR-a(g)}.
\]
The term $2\E\{a(g)-z_\alpha\}$, which we call the reference, is the length error of the interval calibrated on an infinitely long block, and it depends only on the center. The right-hand side, which we call the calibration error, is the error of calibrating on a block of length $m$. The display thus states that the length error differs from the reference by at most the calibration error. Since $g\sim N(0,v_n)$ exactly, the reference equals $4\int_0^\infty\{a(v_n^{1/2}u)-z_\alpha\}\phi(u)\,du$ and is computed by quadrature; by Lemma~\ref{lem:shiftedradius}(iv) it is asymptotically $z_\alpha v_n$, and its values $.054$, $.031$, $.018$, $.010$ and $.006$ for $n=64,\ldots,1024$ are within $2.2\%$ of $z_\alpha v_n$. The calibration error is estimated on each trajectory, with $a(g)$ solved for the realized $g$.

Figure~\ref{fig:s3} reports all $20$ cells. As $K$ grows, the calibration error halves from each value of $K$ to the next, as the calibration rate $m^{-1/2}$ predicts, for example from $.078$ at $K=1$ to $.040$, $.020$ and $.010$ at $n=64$, and the length error approaches the reference at every training size. With $K=64$, the calibration error is at most $.20$ times the reference, so that the length error lies within about $20\%$ of the reference; it is in fact between $.997$ and $1.080$ times the reference, and from $K=16$ to $K=64$ it changes by less than $1.96$ standard errors of the change at every size. With $K=64$, the length error decreases from $.056$ at $n=64$ to $.0065$ at $n=1024$, a factor of $8.57$, against the factor $9.19$ of $n^{-.8}$ over the same range. Once the calibration is accurate, the length error therefore stays at the level $z_\alpha v_n$ set by the center, which decreases only as the training block grows, at the rate $n^{-(2\beta-1)}$ of the mean squared error of the center.

\section{Application: Day-Ahead Load Forecast Errors}\label{app:application}

Every morning, the New York Independent System Operator (NYISO) publishes a forecast of the hourly electricity load of the New York control area (NYCA) for the following day \citep{nyiso2026loadforecast}, and it later publishes the load that was actually served \citep{nyiso2026actualload}. A prediction interval around the forecast states how far the actual load may fall from it, and such probabilistic load forecasts have become important to the planning and operation of energy systems \citep{hong2016probabilistic}. We construct the intervals by split conformal calibration of the percentage forecast error $Y=100(A-F)/F$, where $A$ is the actual load and $F$ the forecast, with one daily series for each hour of the day. The errors are persistent and, as we show below, have long memory, so exchangeability fails, whereas the theory of Section~\ref{sec:main} in principle allows for long memory. We use data from 2005 on, because the median error shifts by about $2.5$ percentage points in January 2005. The years 2005--2013 served to choose and fix the design, which was then applied once to the $111{,}010$ hourly targets from 1 January 2014 to 31 August 2026, to measure how often the $90\%$ intervals contain the actual load and how wide they are.

For the target day $D$, the calibration block consists of the $m$ days ending on $D-2$, all of whose hours have ended before the forecast for $D$ is posted, and the training block of the $n$ days immediately before it, with $n=m\in\{365,730,1461\}$. The target is thus two days after the calibration block, the fixed horizon $h=2$ of Appendix~\ref{app:jointproof}. The origin rolls daily, and every fit is recomputed at every origin. Missing days are skipped, and $k=\lceil(m'+1)(1-\alpha)\rceil$ with $\alpha=.1$ and $m'$ the number of available calibration days.
\begin{itemize}
\item CMR uses a constant covariate. The center $\muhat$ is the training median of $Y$, the radius $\ShatM_{(k)}$ is the $k$-th smallest value of $\abs{Y_t-\muhat}$ over the calibration block, and the interval for the load is $[F\{1+(\muhat-\ShatM_{(k)})/100\},\ F\{1+(\muhat+\ShatM_{(k)})/100\}]$.
\item CQR fits linear quantile regressions of $Y$ at the levels $.05$ and $.95$ on the forecast, standardized on the training block, and on two annual harmonics of the day of the year $d$, $\cos(2\pi jd/365.25)$ and $\sin(2\pi jd/365.25)$ for $j=1,2$, and calibrates them with the score of Section~\ref{sec:cqr}. Each fit is a linear program, accepted only if a feasible dual solution certifies its objective to within $10^{-6}$; no fit failed.
\item The width of the hindsight interval, defined after the evaluation, is a proxy for the oracle width. With a constant covariate, the oracle CMR interval of Section~\ref{sec:cmr} is $\mu\pm q_\alpha$, with $\mu$ the median of $Y$ under the stationary law and $q_\alpha$ the $(1-\alpha)$-quantile of $\abs{Y-\mu}$, and its width is $2q_\alpha$. CMR estimates $\mu$ from the $n$ training errors and $q_\alpha$ from the $m$ calibration errors. The hindsight interval $\tilde\mu\pm\tilde q$ of an hour estimates both from all $N$ errors of that hour in the evaluation period ($N\ge4613$): $\tilde\mu$ is their median and $\tilde q$ the $\lceil(N+1)(1-\alpha)\rceil$-th smallest value of $\abs{Y-\tilde\mu}$. Under the model of Section~\ref{sec:setup}, the errors are ergodic (Appendix~\ref{app:training}), so $\tilde\mu$ and $\tilde q$ converge to $\mu$ and $q_\alpha$ as $N$ grows when $\mu$ is unique and Assumption~\ref{ass:local} holds; for finite $N$, the hindsight width $2\tilde q$ is an estimate of $2q_\alpha$ with an error of its own. The hindsight interval uses the targets themselves, so it cannot be used for prediction, and it covers $.900$ of them by construction.
\end{itemize}

\begin{figure}[t]
\begin{center}
\includegraphics[width=.9\linewidth]{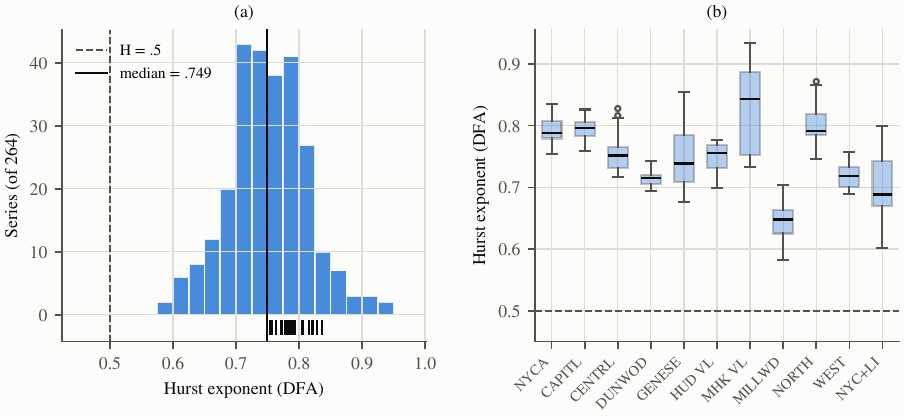}
\end{center}
\caption{Application, Hurst exponents: (a) histogram; (b) box plots of the estimates by area.}
\label{fig:loadhurst}
\end{figure}

Hurst exponents are estimated by detrended fluctuation analysis \citep{peng1994mosaic} for the $264$ hourly error series of NYCA and its ten zone areas (the eleven NYISO load zones, with New York City and Long Island combined) over the evaluation period. Figure~\ref{fig:loadhurst} shows their histogram, with the $24$ NYCA series marked below the axis, and their box plots by area, $24$ hourly series each: all $264$ estimates exceed $.5$, with median $.749$, so the errors have long memory.

\begin{figure}[t]
\begin{center}
\includegraphics[width=.95\linewidth]{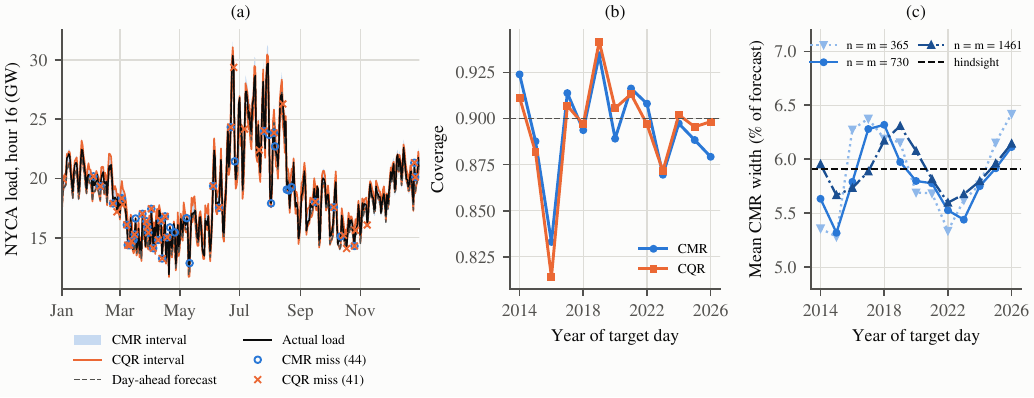}
\end{center}
\caption{Application, NYCA, evaluation period 2014--2026. (a) Hour beginning 16:00, target days in 2025, $n=m=730$: actual load, forecast, CMR interval (band) and CQR interval (lines), in GW, with the misses of each method marked. (b) Coverage by year of CMR and CQR, $n=m=730$, over all 24 hours; the dashed line is the nominal level; 2026 covers January to August. (c) Mean width of CMR by year, in percent of the forecast, for $n=m=365$, $730$ and $1461$; the dashed line is the mean width of the hindsight interval.}
\label{fig:loadmain}
\end{figure}

Figure~\ref{fig:loadmain} shows the results. At $n=m=730$, CMR and CQR both cover $.895$ of the $111{,}010$ targets, $.005$ below the nominal level, and with $n=m=365$ and $1461$ their coverage lies between $.886$ and $.899$. Coverage varies over the years, from $.833$ in 2016 to $.934$ in 2019 for CMR. The mean width of CMR is $5.86\%$, $5.81\%$ and $5.90\%$ of the forecast for $n=m=365$, $730$ and $1461$, or $1017$~MW at $730$, and that of CQR $6.01\%$, $5.86\%$ and $5.96\%$. The mean width of the hindsight interval is $5.91\%$, or $1036$~MW (Figure~\ref{fig:loadmain}(c)), and $5.82\%$ and $5.79\%$ when it is computed from 2014--2019 and from 2020--2026 only. On average, and relative to the mean hindsight width, the CMR width at a single origin with $n=m=730$ differs from the hindsight width of its hour by $9\%$, and the hindsight widths of an hour computed from 2014--2019 and from 2020--2026 differ by $13\%$.

The results make two points. First, although the errors have long memory, both procedures cover close to the nominal level of $90\%$ over more than twelve years of targets that were not used to design them. Second, and more important, the mean width of CMR is within $2\%$ of the hindsight width at every block size: to the accuracy of this proxy, the calibrated interval is on average as wide as the oracle interval. Since the hindsight width is itself an estimate, the comparison concerns average widths, not the width at a single origin.

\end{document}